\theoremstyle{plain}
\newtheorem{theorem}{Theorem}[section]
\newtheorem{lemma}[theorem]{Lemma}
\newtheorem{corollary}[theorem]{Corollary}
\newtheorem{proposition}[theorem]{Proposition}
\newtheorem{conjecture}[theorem]{Conjecture}
\theoremstyle{definition}
\newtheorem{definition}[theorem]{Definition}
\newtheoremstyle{TheoremNum}
	{\topsep}{\topsep}              %%% space between body and thm
  {\itshape}                      %%% Thm body font
  {}                              %%% Indent amount (empty = no indent)
  {\bfseries}                     %%% Thm head font
  {.}                             %%% Punctuation after thm head
  { }                             %%% Space after thm head
  {\thmname{#1}\thmnote{ \bfseries #3}}%%% Thm head spec
\newtheorem{remark}{Remark}
\newcommand{\F}{\mathbb F}
\newcommand{\U}{\mathcal U}
\newcommand{\Oval}{\mathcal O}
\newcommand{\Aut}{\mathrm{Aut}}
\newcommand{\Gal}{\mathrm{Gal}}
\newcommand{\Tr}{ \ensuremath{ \mathrm{Tr}}}
\newcommand{\RN}[1]{%
  \textup{\uppercase\expandafter{\romannumeral#1}}%
}
 \def\zhou#1 {\fbox {\footnote {\ }}\ \footnotetext { From Yue: {\color{red}#1}}}
 \def\trom#1 {\fbox {\footnote {\ }}\ \footnotetext { From Rocco: {\color{blue}#1}}}
\begin{document}
	\title{Unitals in shift planes of odd order}
	\author[R. Trombetti]{Rocco Trombetti}
	\address{Dipartimento di Mathematica e Applicazioni ``R. Caccioppoli", Universit\`{a} degli Studi di Napoli ``Federico \RN{2}", I-80126 Napoli, Italy}
	\email{rtrombet@unina.it}
	\author[Y. Zhou]{Yue Zhou}
	\address{Dipartimento di Mathematica e Applicazioni ``R. Caccioppoli", Universit\`{a} degli Studi di Napoli ``Federico \RN{2}", I-80126 Napoli, Italy}
	\email{yue.zhou.ovgu@gmail.com}
	\date{\today}
	\maketitle
	
	\begin{abstract}
		A finite shift plane can be equivalently defined via abelian relative difference sets as well as planar functions. In this paper, we present a generic way to construct unitals in finite shift planes of odd orders $q^2$. We investigate various geometric and combinatorial properties of them, such as the self-duality, the existences of O'Nan configurations, the Wilbrink's conditions, the designs formed by circles and so on. We also show that our unitals are inequivalent to the unitals derived from unitary polarities in the same shift planes. As designs, our unitals are also not isomorphic to the classical unitals (the Hermitian curves).
	\end{abstract}
\section{Introduction}
Let $G$ be finite group and $N$ a subgroup of $G$. A subset $D$ of $G$ is a \emph{relative difference set} with parameter $(|G|/|N|, |N|, |D|, \lambda)$ if the list of nonzero differences of $D$ comprises every element in $G\setminus N$ exactly $\lambda $ times. The subgroup $N$ is called the \emph{forbidden subgroup}. In this paper, we are interested in relative difference sets with parameters $(q,q,q,1)$ and
we write $(q,q,q,1)$-RDS for short. When $G$ is abelian, $D$ is called an \emph{abelian} $(q,q,q,1)$-RDS.

In \cite{ganley_relative_1975}, Ganley and Spence showed that, for every given $(q,q,q,1)$-RDS $D$ in $G$, we can construct an affine plane of order $q$ and group $G$ acts regularly on its affine points. Therefore we may use the elements in $G$ to denote all the affine points. All the affine lines are $D+g$ and $N+g_i$ where $g\in G$ and $\{g_i : i=1,\dots,q\}$ forms a transversal of $N$ in $G$. Clearly this affine plane can be uniquely extended to a projective plane. The extra line is $L_\infty$ and all $N+g_i$'s meet at the point $(\infty)$. It is not difficult to see that $G$ fix the flag $((\infty),L_\infty)$.

When $G$ is abelian, it is proved that $q$ has to be a power of prime. For the proofs, see \cite{ganley_paper_1976} for the $q$ even case and \cite{blokhuis_proof_2002} for the $q$ odd case. Furthermore, this abelian group $G$ and such a plane are called a \emph{shift group} and a \emph{shift plane} respectively \cite{knarr_polarities_2009}. Most of the known shift planes can be coordinatized by commutative semifields.

When $q$ is odd, all known abelian $(q,q,q,1)$-RDSs are subsets of the group $(\F_q^2,+)$. Such a $(q,q,q,1)$ RDS is equivalent to a function $f:\F_q\rightarrow\F_q$, such that $x\mapsto f(x+a)-f(x)$ is always a bijection for each nonzero $a$. This type of functions are called \emph{planar functions} on $\F_q$, which were first investigated by Dembowski and Ostrom in \cite{dembowski_planes_1968}.

As the counterpart, when $q=2^n$, abelian $(q,q,q,1)$-RDSs only exist in $C_{4}^n$ where $C_4$ is the cyclic group of order $4$. These RDSs can also be equivalently illustrated by functions over $\F_{2^n}$, which can be found in \cite{schmidt_planar_2014,zhou_2^n2^n2^n1-relative_2012}.

Let $m$ be an integer larger than or equal to $3$. A \emph{unital} of order $m$ is a $2$-$(m^3+1, m+1, 1)$ design, i.e.\ a set of $m^3+1$ points arranged into subsets of size $m+1$ such that each pair of distinct points are contained in exactly one of these subsets. 

Most of the known unitals can be embedded in a projective plane $\Pi$ of order $q^2$. In such cases, the \emph{embedded unital} is a set $\U$ of $q^3+1$ points such that each line of $\Pi$ intersects $\U$ in $1$ or $q+1$ points. When $\Pi$ is the desarguesian projective plane $\mathrm{PG}(2,q^2)$, the set of absolute points of a unitary polarity, or equivalently speaking, the rational points on a nondegenerate Hermitian curve form a \emph{classical} unital. There are also non-classical unitals in $\mathrm{PG}(2,q^2)$, for instance the Buekenhout-Metz unitals \cite{buekenhout_characterizations_1976}. There also exist unitals which can not be embedded in a projective plane, such as the Ree unitals \cite{luneburg_remarks_1966}.

Similarly as in desarguesian planes, unitals can be derived from unitary polarities in shift planes; see \cite{abatangelo_ovals_1999,abatangelo_polarity_2002,ganley_class_1972,ganley_polarities_1972,hui_non-classical_2013,knarr_polarities_2010} for their constructions and related research problems. As a special type of shift planes, commutative semifield planes also contain the unitals which are analogous to the Buekenhout-Metz ones in desarguesian planes; see \cite{abatangelo_transitive_2001,zhou_parabolic_2015}.

In this paper, we consider the unitals in the shift planes of order $q^2$. We restrict ourselves to the $q$ odd case, because most of the calculations and constructions in the $q$ even case are quite different.

The organization of this paper is as follows: In Section \ref{sec:construction}, after a brief introduction to shift planes $\Pi(f)$ derived from planar functions $f$, we present a generic construction of unitals in shift planes of order $q^2$. In Section \ref{sec:properties}, we investigate various geometric and combinatorial properties of them, such as the self-duality, the existences of O'Nan configurations, the Wilbrink's conditions, the designs formed by circles and so on. In Section \ref{sec:inequivalence}, we consider the unitals derived from the unitary polarities of the shift planes and show that they are not equivalent to the unitals constructed in Section \ref{sec:construction}. As designs, our unitals are also not isomorphic to the classical unitals (the Hermitian curves).

\section{Construction of Unitals}\label{sec:construction}
Let $\F$ be a finite field of odd order and $f$ a planar function on $\F$. The projective plane $\Pi(f)$ derived from $f$ is defined as follows:
\begin{itemize}
	\item \textbf{Points:} $(x,y)\in \F\times\F$ and $(a)$ for $a\in \F\cup\{\infty\}$;
	\item \textbf{Lines:} $L_{a,b}:=\{(x,f(x+a)-b): x\in\F\}\cup \{(a)\}$ for all $(a,b)\in\F\times \F$, $N_a := \{(a,y): y\in\F \}\cup \{(\infty)\}$ and $L_\infty:= \{(a): a\in \F\cup \{\infty\} \}$.
\end{itemize}

The points except for those on $L_\infty$ are called the \emph{affine points} of $\Pi(f)$. By removing the line $L_\infty$ and the points on it, we get an affine plane.

It is routine to verify that the set of maps 
\[T:=\{\tau_{u,v}: \tau_{u,v}(x,y)=(x+u,y+v): u,v \in \F\}\] 
induces a collineation group on $\Pi(f)$ and this group is abelian and acts regularly on the affine points and all lines $\{L_{a,b}: a,b\in \F\}$. It is also transitive on the line set $\{N_a : a \in \F\}$ and on the points of $L_\infty\setminus\{(\infty)\}$. Hence there are three orbits of all the points (lines) in $\Pi(f)$ under this group. We call it the \emph{shift group} of $\Pi(f)$.

When $f$ can be written as a Dembowski-Ostrom polynomial, i.e.\ $f(x)=\sum a_{ij}x^{p^i+p^j}$ where $p=\mathrm{char}(\F)$, the plane $\Pi(f)$ is also a commutative semifield plane. Using the corresponding semifield multiplication, we can label the points and lines of $\Pi(f)$ in a different way. The intersection of the translation group and the shift group of $\Pi(f)$ is $\{(x,y)\mapsto (x,y+b): b\in \F\}$. See \cite[Section 4]{ghinelli_finite_2003} for details. We refer to \cite{lavrauw_semifields_2011} and \cite{pott_semifields_2014} for recent surveys on semifields and relative difference sets respectively.

Up to equivalence, all known planar functions $f$ on finite fields $\F_q$ of odd characteristics can be written as a Dembowski-Ostrom polynomial except for the Coulter-Matthews ones which are power maps defined by $x\mapsto x^d$ on $\F_{3^m}$ for certain $d$; see \cite{coulter_planar_1997}. Both the Dembowski-Ostrom planar functions and the Coulter-Matthews ones satisfy that
\begin{itemize}
	\item $f(0)=0$ and 
	\item for arbitrary $a,b\in \F_q$, $f(a)=f(b)$ if and only if $a=\pm b$.
\end{itemize}
For a proof of the Dembowski-Ostrom polynomials case, we refer to \cite{kyureghyan_theorems_2008}; for the Coulter-Matthews functions $f(x)=x^d$ on $\F_{3^m}$, it can be verified directly from the fact $\gcd(d,3^m-1)=2$. Actually for a function $f$ defined by a Dembowski-Ostrom polynomial, the above conditions are necessary and sufficient for $f$ to be planar; see \cite{weng_further_2010}. If a planar function satisfies the aforementioned two conditions, then we call it a \emph{normal planar function}.

\begin{lemma}\label{le:basic}
	Let $f$ be a planar function on $\F_{q^2}$, where $q^2$ is odd. Let $\{g_a: a\in \F_{q^2}\}$ be a set of injections from $\F_q$ to $\F_{q^2}$. The set of points
	\[\U_g:=\{(x,g_x(t)): x\in\F_{q^2}, t\in \F_q \} \cup \{(\infty)\} \]
	is a unital in $\Pi(f)$ if for each $a,b\in\F_{q^2}$, there are $1$ or $q+1$ pairs $(x,t)$ such that
	\begin{equation}\label{eq:general}
		f(x+a)- b-g_x(t)=0.
	\end{equation}
\end{lemma}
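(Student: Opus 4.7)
The plan is to verify directly the two defining properties of an embedded unital: that $|\U_g|=q^3+1$, and that every line of $\Pi(f)$ meets $\U_g$ in exactly $1$ or $q+1$ points. First I would do the cardinality count: since each $g_x$ is an injection from $\F_q$ to $\F_{q^2}$, the set $\{g_x(t):t\in\F_q\}$ has exactly $q$ elements, so the affine part of $\U_g$ consists of $q^2\cdot q=q^3$ distinct points, and adjoining $(\infty)$ yields $|\U_g|=q^3+1$, as required.

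Next I would dispose of the three types of lines in $\Pi(f)$ one by one. The line $L_\infty$ contains no affine points and the only point of the form $(a)$ that belongs to $\U_g$ is $(\infty)$, so $|L_\infty\cap\U_g|=1$. For a line $N_a$ of ``vertical'' type, the affine points lying in $\U_g\cap N_a$ are precisely $(a,g_a(t))$ for $t\in\F_q$, which are $q$ distinct points by injectivity of $g_a$; together with $(\infty)\in N_a$ this contributes $q+1$. For a line $L_{a,b}$, the infinite point $(a)$ is not in $\U_g$, and an affine point $(x,f(x+a)-b)\in L_{a,b}$ lies in $\U_g$ if and only if there exists $t\in\F_q$ with $f(x+a)-b=g_x(t)$; because $g_x$ is injective, such a $t$ is unique, so the affine intersections correspond bijectively to the pairs $(x,t)$ satisfying \eqref{eq:general}. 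The hypothesis therefore gives $|L_{a,b}\cap\U_g|\in\{1,q+1\}$.

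Finally, the $2$-design property is immediate by the standard counting argument: any two distinct points of $\U_g$ determine a unique line of $\Pi(f)$, whose intersection with $\U_g$ must contain at least $2$ and hence exactly $q+1$ points; taking the $(q+1)$-secants as blocks yields the desired $2$-$(q^3+1,q+1,1)$ design. I do not expect any serious obstacle, since the real content of the statement has been packaged into the condition \eqref{eq:general}; the only point that requires care is observing the bijection between solutions of \eqref{eq:general} and affine intersections of $L_{a,b}$ with $\U_g$, which rests on the injectivity of each $g_x$.
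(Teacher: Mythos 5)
Your proof is correct and follows essentially the same route as the paper: check the three line types ($L_\infty$, $N_a$, $L_{a,b}$) and identify the intersections of $L_{a,b}$ with $\U_g$ with the solution pairs of \eqref{eq:general}. The paper's proof is just a terser version of the same argument; your extra remarks on the point count and the injectivity of $g_x$ make explicit what the paper leaves implicit.
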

\begin{proof}
	It is not difficult to verify that $\U_g$ satisfies the definition of a unital in the following steps. First, every line through $(\infty)$ meet $\U_g$ at $q+1$ points. Second, $L_\infty$ meet it at $(\infty)$. Finally, the number of common points of $L_{a,b}$ and $\U_g$ are exactly the number of pairs of $(x,t)$ such that $f(x+a) - b-g_x(t)=0$.
\end{proof}

Let $\xi$ be an element in $\F_{q^2}\setminus \F_q$. Then every element $x$ of $\F_{q^2}$ can be written as $x=x_0+x_1\xi$ where $x_0,x_1\in\F_q$. Similarly, every function $f:\F_{q^2}\rightarrow \F_{q^2}$ can be written as  $f(x)=f_0(x)+f_1(x)\xi$ where $f_0,f_1$ are maps from $\F_q$ to itself. Throughout this paper, we frequently switch between the element $x\in \F_{q^2}$ and its two dimensional representation $(x_0,x_1)\in \F_{q}^2$. If a special assumption on $\xi$ is needed, we will point it out explicitly.

\begin{proposition}\label{prop:general}
	Let $f$ be a planar function on $\F_{q^2}$ and  $\theta\in\F_{q^2}^*$, where $q$ is odd. Assume that %$f(0)=0$ and 
	\[\#\{x\in\F_{q^2}:\theta_1 f_0(x) - \theta_0 f_1(x)=c\} =
	\left\{
	  \begin{array}{ll}
	    q+1, & \hbox{$c\neq 0$;} \\
	    1, & \hbox{$c=0$.}
	  \end{array}
	\right.
	\]
	Then the set of points
	\begin{equation}\label{eq:u_theta_general}
		\U_\theta:=\{(x,t\theta): x\in\F_{q^2}, t\in \F_q \} \cup \{(\infty)\} 
	\end{equation}
	is a unital in $\Pi(f)$. Furthermore, $L_{a,b}$ is a tangent line to $\U_\theta$ if and only if $b_0\theta_1-b_1\theta_0=0$.
\end{proposition}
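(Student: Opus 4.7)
The plan is to reduce the proposition to Lemma~\ref{le:basic} by taking the constant family of injections $g_x(t):=t\theta$ for every $x\in\F_{q^2}$, so that $\U_g=\U_\theta$. It then suffices to verify the counting condition of Lemma~\ref{le:basic}: for every pair $(a,b)\in\F_{q^2}\times\F_{q^2}$, the number of pairs $(x,t)\in\F_{q^2}\times\F_q$ satisfying
\[f(x+a)-b-t\theta=0\]
must be exactly $1$ or $q+1$, corresponding respectively to the tangent and secant cases.

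The first step would be to notice that the substitution $y=x+a$ removes $a$ from the equation: the number of pairs $(x,t)$ above equals the number of $(y,t)\in\F_{q^2}\times\F_q$ with $f(y)-b=t\theta$. For a fixed $y$, such a $t\in\F_q$ exists if and only if $f(y)-b$ lies in the one-dimensional $\F_q$-subspace $\F_q\theta$ of $\F_{q^2}$, in which case $t$ is unique. So the count equals $\#\{y\in\F_{q^2}:f(y)-b\in\F_q\theta\}$, which does not depend on $a$.

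Next I would translate the membership $f(y)-b\in\F_q\theta$ into a linear equation in the coordinates $(\cdot)_0,(\cdot)_1$ relative to $\xi$. Writing $z=f(y)-b$ and $\theta=\theta_0+\theta_1\xi$, a direct check (splitting according to whether $\theta_1=0$ or not) shows
\[z\in\F_q\theta \iff z_0\theta_1-z_1\theta_0=0.\]
Substituting $z_0=f_0(y)-b_0$ and $z_1=f_1(y)-b_1$ turns this into
\[\theta_1 f_0(y)-\theta_0 f_1(y)=b_0\theta_1-b_1\theta_0.\]
Setting $c:=b_0\theta_1-b_1\theta_0$, the hypothesis of the proposition gives exactly that the number of such $y$ equals $q+1$ when $c\neq 0$ and equals $1$ when $c=0$. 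This verifies the hypothesis of Lemma~\ref{le:basic}, and therefore $\U_\theta$ is a unital in $\Pi(f)$.

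For the furthermore clause, the tangent lines $L_{a,b}$ to $\U_\theta$ are precisely those meeting $\U_\theta$ in a single affine point, which by the above count corresponds to $c=0$, i.e.\ $b_0\theta_1-b_1\theta_0=0$. There is no genuine obstacle in the argument; the only point requiring care is the coordinate-based characterization of $\F_q\theta$, where one should handle the degenerate case $\theta_1=0$ to confirm that the same linear form $z_0\theta_1-z_1\theta_0$ detects membership in $\F_q\theta$ uniformly.
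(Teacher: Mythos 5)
Your proposal is correct and takes essentially the same approach as the paper: both reduce to Lemma \ref{le:basic} with the constant injections $g_x(t)=t\theta$ and then eliminate $t$ by noting that $f(y)-b\in\F_q\theta$ exactly when $\theta_1\bigl(f_0(y)-b_0\bigr)-\theta_0\bigl(f_1(y)-b_1\bigr)=0$, so the hypothesis yields the counts $1$ or $q+1$ and the tangency criterion $b_0\theta_1-b_1\theta_0=0$. The paper writes this as a two-equation coordinate system from which $t$ is eliminated (with the same caveat when $\theta_1=0$), which is the identical computation phrased slightly differently.
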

\begin{proof}
	First, \eqref{eq:general} is equivalent to the following two equations
	\begin{eqnarray}
		\label{eq:square} f_0(x) - b_0 &=& t\theta_0,\\
		\nonumber f_1(x) - b_1 &=& t\theta_1.
	\end{eqnarray}
	As $\theta\neq0$, the system of equations above is equivalent to
	\begin{eqnarray}
	\label{eq:general.first}	\theta_1f_0(x)-\theta_0f_1(x)-(\theta_1b_0-\theta_0b_1) &=& 0,\\
	\nonumber				f_1(x) - b_1 &=& t\theta_1.
	\end{eqnarray} 
	(If $\theta_1=0$, then we replace the second equation by $f_0(x)-b_0=t\theta_0$.) Noting that $t$ ranges through all the elements in $\F_q$, we see that the cardinality of $(x,t)$ satisfying \eqref{eq:general} is equivalent to the cardinality of $x$ such that \eqref{eq:general.first} holds. According to the assumption, $\U_\theta$ is a unital in $\Pi(f)$. The last statement of the proposition follows from \eqref{eq:general.first} directly.
\end{proof}

Next we consider several special cases of Proposition \ref{prop:general}. Let $\eta$ be the quadratic character on $\F_q$, i.e.\
\begin{equation*}
	\eta(x) := \left\{
	  \begin{array}{rl}
	    1, & \hbox{if $x$ is a square;} \\
	    -1, & \hbox{if $x$ is not a square;} \\
	    0, & \hbox{if $x=0$.}
	  \end{array}
	\right.
\end{equation*} 
The integer-valued function $\nu$ on $\F_q$ is defined by $\nu(b)=-1$ for $b\in \F_q^*$ and $\nu(0)=q-1$. 
To prove the existence of unitals $\U_\theta$ in many shift planes, we need the following well-known result.
\begin{lemma}\label{le:quadratic}
	For odd prime power $q$, let $b$, $a_0$, $a_1$ and $a_2\in\F_q$. Let 
	\[Q(x_0,x_1):=a_0x_0^2+a_1x_0x_1+ a_2x_1^2.\]
	Its \emph{discriminant} is defined by $\Delta:=a_0a_2-a_1^2/4$.
%which is called the \emph{discriminant} of $Q(x_0,x_1)$. 
%	When $\Delta\neq0$, $Q$ is called \emph{nondegenerate}. Quadratic form $Q$ is irreducible if and only if $-\Delta$ is a nonsquare.  
	Assume that $\Delta\neq 0$. Then the number of solution of $a_0x_0^2+a_1x_0x_1+a_2x_1^2=b$ is
	\[N(Q(x_0,x_1)=b)=q+\nu(b)\eta(-\Delta).\]
	In particular, when $-\Delta$ is a nonsquare, 
	\[N(Q(x_0,x_1)=b)=\left\{
	  \begin{array}{ll}
	    q+1, & \hbox{$b\neq 0$;} \\
	    1, & \hbox{$b=0$.}
	  \end{array}
	\right.
	\]
\end{lemma}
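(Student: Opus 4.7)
The plan is to reduce $Q$ to a diagonal binary quadratic form by a nonsingular linear substitution, and then invoke the standard count for diagonal forms over $\F_q$.

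First I would complete the square. Since $q$ is odd and $\Delta\neq 0$, the form $Q$ is nondegenerate. If $a_0\neq 0$, the substitution $u=x_0+\tfrac{a_1}{2a_0}x_1$, $v=x_1$ is a bijection $\F_q^2\to\F_q^2$ transforming $Q$ into $a_0 u^2+\tfrac{\Delta}{a_0}v^2$. The remaining cases---$a_0=0\neq a_2$ (swap the variables) and $a_0=a_2=0$ (forcing $a_1\neq 0$, in which case $x_0=u+v$, $x_1=u-v$ yields $a_1(u^2-v^2)$)---are handled analogously. In every case $N(Q=b)=N(\alpha u^2+\beta v^2=b)$ for some nonzero $\alpha,\beta\in\F_q$, and since a linear substitution alters the determinant of the Gram matrix by a nonzero square, $\eta(-\alpha\beta)=\eta(-\Delta)$.

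Next I count $N(\alpha u^2+\beta v^2=b)$. Fixing $v$ and using $\#\{u\in\F_q:u^2=c\}=1+\eta(c)$ (with the convention $\eta(0)=0$) gives
\[
N \;=\; q \;+\; \eta(\alpha)\sum_{v\in\F_q}\eta(b-\beta v^2).
\]
The inner expression is a standard character sum of a quadratic polynomial in $v$; by the classical identity for such sums, it evaluates uniformly to $\nu(b)\eta(-\beta)$ for all $b\in\F_q$ (the two pieces of the definition of $\nu$ corresponding to whether the polynomial-discriminant $4\beta b$ vanishes or not). Multiplicativity of $\eta$ then produces $N = q + \nu(b)\eta(-\alpha\beta) = q + \nu(b)\eta(-\Delta)$, as claimed.

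The particular statement is now immediate: if $-\Delta$ is a non-square then $\eta(-\Delta)=-1$, so $N=q-\nu(b)$, giving $q+1$ for $b\neq 0$ and $1$ for $b=0$. The only nontrivial ingredient---and the main obstacle if one insists on a fully self-contained argument---is the evaluation of the character sum $\sum_{v}\eta(b-\beta v^2)$; this can be reduced to the elementary identity $\sum_{y\in\F_q}\eta(y)=0$ by substituting over squares and non-squares of $\F_q^*$.
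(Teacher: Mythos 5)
Your argument is correct. Note, however, that the paper itself gives no proof of this lemma: it is invoked as a ``well-known result'' (it is the standard count for nondegenerate binary quadratic forms, as in Lidl--Niederreiter, \emph{Finite Fields}), so there is no internal proof to compare against; what you have written is essentially the classical textbook derivation. Your two steps are both sound: completing the square (with the separate cases $a_0\neq 0$, $a_0=0\neq a_2$, and $a_0=a_2=0$, the last forcing $a_1\neq 0$ since $\Delta\neq 0$) reduces $Q$ to a diagonal form $\alpha u^2+\beta v^2$ with $\alpha\beta$ equal to $\Delta$ up to a nonzero square, because a nonsingular substitution multiplies the Gram determinant by $(\det M)^2$; and the fibre count $\#\{u:u^2=c\}=1+\eta(c)$ together with $\sum_{v}\eta(b-\beta v^2)=\nu(b)\eta(-\beta)$ yields $N=q+\nu(b)\eta(-\alpha\beta)=q+\nu(b)\eta(-\Delta)$. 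The one ingredient you leave as a citation, the evaluation of $\sum_{v}\eta(-\beta v^2+b)$, is indeed classical and, as you indicate, reduces to $\sum_{y}\eta(y)=0$ by writing $\sum_{t\neq 0}\eta(t^2+ct)=\sum_{t\neq 0}\eta(1+c/t)=-1$ for $c\neq 0$; spelling out those three lines would make your proof fully self-contained, which is more than the paper provides.
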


\begin{theorem}\label{th:many.planar.functions.1}
	Let $f$ be a planar function on $\F_{q^2}$, where $q^2=p^{2n}$, $p$ is an odd prime and $n$ is a positive integer. Assume that $\theta$ is an element in $\F_{q^2}^*$ such that $\theta^{q+1}$ is a nonsquare in $\F_q$ and $f$ satisfies that
	\begin{equation}\label{eq:coro.condition.1}
	\#\{x\in\F_{q^2}:f(x)=c\} =\#\{y \in \F_{q^2}: y^2=c\},
	\end{equation}
	for each $c\in\F_{q^2}$. Then the set of points
		\[\U_\theta:=\{(x,t\theta): x\in\F_{q^2}, t\in \F_q \} \cup \{(\infty)\} \]
	is a unital in $\Pi(f)$.
	Furthermore, $\U_\theta$ is a unital for the following planar functions:
	\begin{enumerate}[label=(\alph*)]
		\item $f(x)=x^2$, i.e.\ $\Pi(f)$ is a Desarguesian plane.
		\item $f(x)=x^{p^k+1}$ where $k$ is an integer satisfying that $1\le k\le n$ and $2n/\gcd(2n,k)$ is odd, i.e.\ $\Pi(f)$ is an Albert's commutative twisted field plane \cite{albert_generalized_1961}.
		\item $f(x)=x^{\frac{3^k+1}{2}}$ where $\gcd(k,2n)=1$ (now $p=3$), i.e.\ $\Pi(f)$ is a Coulter-Matthews plane which is not a translation plane \cite{coulter_planar_1997}.
	\end{enumerate}
\end{theorem}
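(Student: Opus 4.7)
The plan is to reduce the first assertion to Proposition~\ref{prop:general} by rewriting the quantity $\theta_1f_0(x)-\theta_0f_1(x)$ as an $\F_q$-linear form applied to $f(x)$, and then to verify hypothesis~\eqref{eq:coro.condition.1} separately for each of the families~(a)--(c).

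For the general statement, one needs to count, for every $c\in\F_q$, the number of $x\in\F_{q^2}$ solving $\theta_1f_0(x)-\theta_0f_1(x)=c$. I would first recognize the left-hand side as $L(f(x))$, where $L\colon\F_{q^2}\to\F_q$ is the $\F_q$-linear form sending $z_0+z_1\xi$ to $\theta_1 z_0-\theta_0 z_1$. Since $\#f^{-1}(y)=\#\{z:z^2=y\}$ for every $y\in\F_{q^2}$ by~\eqref{eq:coro.condition.1}, a summation over the fibres of $L$ gives
\[ \#\{x\in\F_{q^2}:\ L(f(x))=c\} \;=\; \#\{z\in\F_{q^2}:\ L(z^2)=c\}. \]
Writing $\xi^2=\alpha+\beta\xi$ with $\alpha,\beta\in\F_q$, a direct expansion turns $L(z^2)$ into the quadratic form $Q(z_0,z_1)=\theta_1 z_0^2-2\theta_0 z_0z_1+(\alpha\theta_1-\beta\theta_0)z_1^2$, whose discriminant is $\Delta=\alpha\theta_1^2-\beta\theta_0\theta_1-\theta_0^2$. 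The pivot of the argument is the identity $-\Delta=\theta^{q+1}$, which follows from the Vieta relations $\xi+\xi^q=\beta$ and $\xi\xi^q=-\alpha$ applied to $\theta\cdot\theta^q=(\theta_0+\theta_1\xi)(\theta_0+\theta_1\xi^q)$. Since $\theta^{q+1}$ is assumed to be a nonsquare in $\F_q$ (in particular nonzero, so $\Delta\neq 0$), Lemma~\ref{le:quadratic} delivers exactly the counts $q+1$ (for $c\neq 0$) and $1$ (for $c=0$) required to invoke Proposition~\ref{prop:general}.

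It remains to verify~\eqref{eq:coro.condition.1} for each listed $f$. Case~(a) is immediate. For~(b) and~(c) the planar function is normal, so $f(0)=0$ and $f$ is $2$-to-$1$ on $\F_{q^2}^*$; its image therefore has cardinality $(q^2+1)/2$, which matches the number of squares in $\F_{q^2}$ together with $0$. To force the image to actually \emph{be} the set of squares, I would exhibit $f(x)$ as an explicit square: in~(b), $p$ odd makes $p^k+1$ even, so $f(x)=(x^{(p^k+1)/2})^2$; in~(c), $\gcd(k,2n)=1$ forces $k$ odd, hence $3^k\equiv 3\pmod 4$ and $(3^k+1)/2$ is even, giving $f(x)=(x^{(3^k+1)/4})^2$. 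In both cases the image is contained in the squares, and the cardinality match closes the argument. The main obstacle I anticipate is precisely the bookkeeping that leads to $-\Delta=\theta^{q+1}$; once that identity is in hand, Lemma~\ref{le:quadratic} does all the counting, and the three examples reduce to recognizing a square root of $f(x)$.
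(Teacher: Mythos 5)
Your argument is correct and follows essentially the same route as the paper: transfer the fibre count from $f$ to $y\mapsto y^2$ via \eqref{eq:coro.condition.1}, reduce to the quadratic-form count of Lemma \ref{le:quadratic} with $-\Delta=\theta^{q+1}$, and invoke Proposition \ref{prop:general}. The only (harmless) variations are that you keep a general $\xi$ with $\xi^2=\alpha+\beta\xi$ instead of the paper's normalization $\xi^2=\alpha\in\F_q$, and you verify \eqref{eq:coro.condition.1} for (a)--(c) via normality plus exhibiting $f(x)$ as an explicit square rather than via $\gcd$ of the exponent with $q^2-1$, which amounts to the same computation.
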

\begin{proof}
	Under the assumption, it is clear that
	\[\#\{x:f(x+a)-b=t\theta \} = \#\{y:y^2-b=t\theta \}.\]
	Now assume that $\xi\in\F_{q^2}\setminus\F_q$ satisfying $\xi^2=\alpha\in\F_q$. Similarly as in Proposition \ref{prop:general}, we denote $y=y_0+y_1\xi$ and $\theta=\theta_0+\theta_1\xi$ where $y_0,y_1,\theta_0$, $\theta_1\in\F_q$ and $(\theta_0,\theta_1)\neq (0,0)$. Then
	\[y^2 = y_0^2 + \alpha y_1^2 + 2y_0y_1 \xi.\]
	By \eqref{eq:general.first} in Proposition \ref{prop:general} and Lemma \ref{le:quadratic}, we only have to show that $\theta_1y_0^2-2\theta_0y_0y_1+\theta_1\alpha y_1^2$ is an irreducible quadratic form. Its discriminant equals $\theta_1^2\alpha-\theta_0^2=-\theta^{q+1}$. From the assumption and Lemma \ref{le:quadratic}, it follows that $\U_\theta$ is a unital in $\Pi(f)$.
	
	As the three families of planar functions are all power maps, by considering the greatest common divisors of the exponent and $q^2-1$, it is not difficult to show that they all satisfy \eqref{eq:coro.condition.1}. Therefore, $\U_\theta$ is a unital in anyone of these shift planes.
\end{proof}

%For any $\xi\in\F_q\setminus \F_r$, we can always find an element $\theta\in\F_q\setminus\F_r$ such that $\Tr_{q/r}(\theta)=1$ and $\Tr_{q/r}(\theta\xi)=0$. It is routine to verify that the following result is equivalent to Proposition \ref{prop:general}.
%\begin{theoremn}[\ref{th:general}*]
%	Let $f$ be a planar function on $\F_q$, where $q$ is odd and $q=r^2$. Let $\xi$ be a root of the equation $x^2+\beta x+\alpha=0$ which is irreducible over $\F_r$. Let $\theta\in\F_q\setminus\F_r$ be such that $\Tr_{q/r}(\theta)=1$ and $\Tr_{q/r}(\theta\xi)=0$. Assume that $\alpha$ is a nonsquare in $\F_r$ and $f$ satisfies that
%	\[\#\{x\in\F_q:\Tr(\theta f(x))=c\} =\#\{y\in \F_q: \Tr(\theta y^2)=c\},\]
%	for each $c\in\F_r$. Then the set of points
%		\[\U_\xi:=\{(x,t\xi): x\in\F_q, t\in \F_r \} \cup \{(\infty)\} \]
%	is a unital in $\Pi(f)$.
%\end{theoremn}

\begin{theorem}\label{th:many.planar.functions}
	Let $f$ be a planar function on $\F_{q^2}$, where $q^2=p^{2n}$, $p$ is an odd prime and $n$ is a positive integer. Let $\xi$ be an element in $\F_{q^2}\setminus \F_q$. Let $\U_\xi$ be the set of points defined by \eqref{eq:u_theta_general}.
	
	When $p^n \equiv 1 \pmod 4$ and $\alpha$ is a nonsquare in $\F_q$, $\U_\xi$ is a unital in the following commutative (pre)semifield planes, besides those appeared in Theorem \ref{th:many.planar.functions.1}:
	\begin{enumerate}[label=(\alph*)]
		\item Dickson's semifield planes \cite{dickson_commutative_1906} and the corresponding planar functions are
		\[f(x)=(x_0^2+\alpha x_1^{2p^i})+2x_0x_1\xi ,\]
		where $i$ is an integer satisfying $0<i<n$.
		
		\item The semifield planes constructed by Pott and the last author in \cite{zhou_new_2013} and the corresponding planar functions are
		\[f(x)=(x_0^{p^k+1}+\alpha x_1^{p^{k+i}+p^i})+2x_0x_1\xi ,\]
		where $i,k$ are integers such that $0<i,k<n$ and $n/\gcd(k,n)$ is odd.
	\end{enumerate}
	
	When $p=3$, $\U_\xi$ is a unital in 
	\begin{enumerate}[label=(\alph*)]\setcounter{enumi}{2}
	\item 	Ganley's semifields planes where $n$ is defined to be odd \cite{ganley_central_1981}. The corresponding planar functions are
		\[f(x)=(x_0^2+x_1^{10})+(2x_0x_1+x_1^6)\xi.\]
	\item Penttila-Williams semifield planes where $n=5$ \cite{penttila_ovoids_2004}. The corresponding planar functions are
			\[f(x)=(x_0^2+x_1^{18})+(2x_0x_1+x_1^{54})\xi.\]
	\end{enumerate}
	
	When $p^n\equiv 3 \pmod{4}$, $\U_\xi$ is a unital in
	\begin{enumerate}[label=(\alph*)]\setcounter{enumi}{4}
		\item Budaghyan-Helleseth semifield planes \cite{budaghyan_new_2008}. The corresponding planar functions are
		\[f(x)= bx^{p^k+1}+(bx^{p^k+1})^{p^n}+\xi x^{p^n+1},\]
		where integer $k$ is such that $0<k<n$ and $2\nmid \frac{n}{\gcd(k,n)}$, and $b$ is a nonsquare in $\F_{q^2}^*$.
	\end{enumerate}
\end{theorem}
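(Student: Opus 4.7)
The plan is to apply Proposition \ref{prop:general} with $\theta = \xi$, so that $\theta_0 = 0$ and $\theta_1 = 1$. The hypothesis of the proposition then collapses to the single distributional statement
\[
\#\{x \in \F_{q^2}: f_0(x) = c\} =
\begin{cases} q+1, & c \neq 0, \\ 1, & c = 0, \end{cases}
\]
and in each of the five families we read $f_0$ off from $f$ and verify this.

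For (a)--(d) a uniform strategy applies. Make a Frobenius substitution in $x_1$: $y := x_1^{p^i}$ in (a) and (b), $y := x_1^{5}$ in (c), $y := x_1^{9}$ in (d). In each case the substitution is a bijection of $\F_q$ (the exponents are powers of $p$ in (a), (b), and are coprime to $q-1$ in (c), (d), since $\gcd(5, 3^n-1) = 1$ for $n$ odd and $\gcd(9, 3^5-1) = 1$). After this substitution $f_0$ reduces, either directly or after a short counting step, to the binary form $x_0^2 + \alpha y^2$ (or $x_0^2 + y^2$ in the characteristic-three cases). Lemma \ref{le:quadratic} then applies provided $-\Delta$ is a non-square in $\F_q$: in (a), (b) this follows from $\alpha$ non-square together with $p^n \equiv 1 \pmod 4$ (so $-1$ is a square); in (c), (d) we have $p^n = 3^n \equiv 3 \pmod 4$, so $-1$ itself is a non-square. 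The only extra work is in (b), where the substitution yields $f_0 = x_0^{p^k+1} + \alpha y^{p^k+1}$ rather than a true quadratic form, but the hypothesis $n/\gcd(k,n)$ odd forces $\gcd(p^k+1,\,q-1) = 2$, so $t \mapsto t^{p^k+1}$ on $\F_q$ has the same fibre-size function as $t \mapsto t^2$; writing $n(s) := |\{u \in \F_q : u^{p^k+1} = s\}|$, a short double-count gives
\[
\#\{(x_0, y): x_0^{p^k+1} + \alpha y^{p^k+1} = c\}
= \sum_{s + \alpha t = c} n(s)\, n(t)
= \#\{(u,v): u^2 + \alpha v^2 = c\},
\]
so we conclude via Lemma \ref{le:quadratic} as before.

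Case (e) is the main obstacle, because $f_0(x) = \Tr_{\F_{q^2}/\F_q}(b x^{p^k+1})$ does not split as a binary quadratic form over $\F_q$. Here I would use the planarity of $f$ itself. For $a \neq 0$ the map $x \mapsto f(x+a) - f(x)$ is a bijection of $\F_{q^2}$, so projecting its values onto the first coordinate gives $\#\{x : f_0(x+a) - f_0(x) = c\} = q$ for every $c \in \F_q$. Setting $N_c := \#\{x : f_0(x) = c\}$ and summing over $a$ then yields $\sum_c N_c = q^2$ and $\sum_c N_c^2 = q^3 + q^2 - q$. If in addition one establishes $N_0 = 1$, then Cauchy--Schwarz applied to $(N_c)_{c \neq 0}$ gives equality (the identity $(q^2-1)^2 = (q-1)(q^3+q^2-q-1)$ holds), forcing $N_c = q+1$ for all $c \neq 0$. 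So everything reduces to showing
\[
\Tr_{\F_{q^2}/\F_q}(b x^{p^k+1}) = 0 \;\Longrightarrow\; x = 0,
\]
which for $x \neq 0$ is the multiplicative equation $x^{(q-1)(p^k+1)} = -b^{1-q}$ in $\F_{q^2}^*$. I would dispatch this by computing $e := \gcd(p^k+1,\, q+1)$ using the hypothesis $n/\gcd(k,n)$ odd, identifying the image of $x \mapsto x^{(q-1)(p^k+1)}$ as the subgroup of $(q+1)/e$-th roots of unity in $\F_{q^2}^*$, and invoking $b \notin (\F_{q^2}^*)^2$ to place $-b^{1-q}$ outside that subgroup. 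The main technical hurdle is this final step, which requires careful tracking of $2$-adic valuations and may split into subcases according to the parity of $k/\gcd(k,n)$.
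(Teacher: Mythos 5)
Your handling of (a)--(d) is correct and is essentially the paper's own argument: the substitutions $y=x_1^{p^i}$, $y=x_1^5$, $y=x_1^9$ reduce $f_0$ (up to value distribution) to $y_0^2+\alpha y_1^2$ resp.\ $y_0^2+y_1^2$, and Lemma \ref{le:quadratic} applies because $-\alpha$ (resp.\ $-1$) is a nonsquare under the stated congruence on $p^n$; your fibre-size convolution in (b) just makes explicit a step the paper treats as immediate.

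Case (e), however, is left with a genuine gap, and it sits exactly where the two hypotheses of that case ($b$ a nonsquare, $p^n\equiv 3\pmod 4$) must be used. Your second-moment computation from planarity and the Cauchy--Schwarz equality argument are fine, but they only reduce the statement to $N_0=1$, i.e.\ to $\Tr_{q^2/q}(bx^{p^k+1})=0\Rightarrow x=0$, and this you do not prove: you sketch a plan and explicitly defer the decisive verification. For the record, the route can be completed: with $b=g^m$ ($m$ odd) and $e=\gcd(p^k+1,q+1)$, the equation $x^{(q-1)(p^k+1)}=-b^{1-q}$ is solvable iff $e\mid \frac{q+1}{2}-m$; since $e$ is even while $q\equiv 3\pmod 4$ makes $\frac{q+1}{2}$ even and hence $\frac{q+1}{2}-m$ odd, there is no solution. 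Note in particular that when $k/\gcd(k,n)$ is odd one has $e=p^{\gcd(k,n)}+1>2$, so the criterion involves $m$ modulo $e$ and not merely the quadratic character of $b$; ``invoking $b\notin(\F_{q^2}^*)^2$'' by itself, as in your sketch, is not enough. The paper sidesteps all of this, and your premise that $f_0$ ``does not split as a binary quadratic form over $\F_q$'' is what pushes you onto the harder road: since $x\mapsto x^{p^k+1}$ and $y\mapsto y^2$ have the same value distribution on $\F_{q^2}$, one may replace $f_0(x)$ by $g(y)=\Tr_{q^2/q}(by^2)$; writing $y=y_0+y_1\varepsilon$ and $b=b_0+b_1\varepsilon$ with $\varepsilon^q=-\varepsilon$, this is the binary quadratic form $2b_0(y_0^2+\varepsilon^2y_1^2)+4\varepsilon^2b_1y_0y_1$ in $(y_0,y_1)$ with discriminant $\Delta=4\varepsilon^2b^{q+1}$, a product of two nonsquares of $\F_q$ (times a square), so $-\Delta$ is a nonsquare precisely when $-1$ is, i.e.\ when $p^n\equiv 3\pmod 4$, and Lemma \ref{le:quadratic} finishes the count directly, with no moment argument and no case analysis.
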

\begin{proof}
	The proof for the first two cases is straightforward: As the first component of the corresponding planar functions are $x_0^2+\alpha x_1^{2p^i}$ and $x_0^{p^k+1}+\alpha x_1^{(p^k+1)p^i}$, clearly we can choose $Q(y_0,y_1)=y_0^2+\alpha y_1^{2}$ for these two cases and $Q$ is irreducible if and only if $-\alpha$ is a nonsquare in $\F_{p^n}$. It is equivalent to that $-1$ is a square, which holds exactly when $p^n \equiv 1\pmod 4$.
	
	The cases (c) and (d) are also not difficult to verify: We can replace $x_1^5$ in $x_0^2+x_1^{10}$ by $y_1$ and replace $x_1^9$ in $x_0^2+x_1^{18}$ by $y_1$. Then the results follow from the fact that $-1$ is a nonsquare in $\F_{3^n}$ where $n$ is odd.
	
	In (e), noting that $f(x)=f_0(x)+f_1(x)\xi$ where $f_0(x)=bx^{p^k+1}+(bx^{p^k+1})^{q}\in\F_{q}$ and $f_1(x) = x^{q+1}\in\F_{q}$ for $x\in \F_{q^{2}}$, by Proposition \ref{prop:general} we only have to concentrate on $f_0$. As $\gcd(p^k+1, p^n-1)=2$, we see that
	\[\#\{x : f_0(x)=a\}=\#\{y: g(y):=by^2+(by^2)^{q}=a\}, \]
	for each $a\in\F_{q}$.
	
	Now we choose $\varepsilon\in\F_{q^{2}}\setminus\F_{q}$ satisfying that $\varepsilon^{q}+\varepsilon=0$ and write $y=y_0+y_1\varepsilon$ where $y_0,y_1\in\F_{q}$. It is clear that $\Tr_{q^{2}/q}(y)=y_0$. Then 
	\begin{align*}
		g(y)& = \Tr_{q^{2}/q}(by^2)\\
			& = \Tr_{q^{2}/q}((b_0+b_1\varepsilon)(y_0^2+\varepsilon^2 y_1^2+ 2y_0y_1\varepsilon))\\ 
			& = 2b_0(y_0^2+\varepsilon^2 y_1^2)+4\varepsilon^2 b_1y_0y_1.
	\end{align*}
	Its discriminant equals $\Delta=2b_0^2 \cdot 2\varepsilon^2-4\varepsilon^4b_1^2=4\varepsilon^2b\cdot b^{q}$. Clearly $\varepsilon^2$ is a nonsquare in $\F_{q}$, otherwise $\varepsilon\in\F_{q}$ leading to a contradiction. As $b$ is a nonsquare in $\F_{q^{2}}$, $b\cdot b^{q}$ has to be a nonsquare in $\F_{q}$. Therefore $\eta(-\Delta)$ is a nonsquare in $\F_{q}$ if and only if $-1$ is a nonsquare in it. This happens exactly when $q=p^n \equiv 3 \pmod 4$. From Proposition \ref{prop:general} and Lemma \ref{le:quadratic}, it follows that $\U_\xi$ is a unital.
\end{proof}

\section{Properties of $\U_\theta$}\label{sec:properties}
In this section, we proceed to investigate several common properties of $\U_\theta$ obtained in Proposition \ref{prop:general}. Several important subgroups of their automorphism groups and self-duality are considered in Subsection \ref{subsec:basic}. Then we look at the projections of the blocks in the unital, which are called circles, and the Wilbrink condition \RN{2}. In Subsection \ref{subsec:onan}, we investigate the existence of O'Nan configurations. To conclude this section, we present a conjecture on the automorphism groups of our unitals and several comments on this conjecture.

\subsection{Basic properties}\label{subsec:basic}

An \emph{oval} $\Oval$ in a projective plane $\Pi$ of odd order $q$ is a set of $q+1$ points such that every line in $\Pi$ meets $\Oval$ in $0$, $1$ or $2$ points. According to the famous result by Segre in \cite{segre_ovals_1955}, all ovals in desarguesian planes of odd orders are nondegenerate conics.
\begin{proposition}\label{prop:unital=ovals}
	Let $f$ be a normal planar function on $\F_{q^2}$ and $\U_\theta$ be a unital in $\Pi(f)$ constructed in Proposition \ref{prop:general}. Then for each $c\in \F_{q^2}$, the set 
	\[\Oval_c:=\{(x, c): x \in \F_{q^2}\}\cup \{(\infty)\}\]
	is an oval in $\Pi(f)$ and $\U_\theta$ is a union of ovals.
\end{proposition}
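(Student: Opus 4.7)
The plan is to verify directly that $\Oval_c$ has the correct size and the correct intersection pattern with every line of $\Pi(f)$, using only the normality of the planar function $f$, and then to exhibit $\U_\theta$ as a union of the ovals $\Oval_{t\theta}$ indexed by $t\in\F_q$.

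First, $|\Oval_c|=q^2+1$, which is the required cardinality for an oval in a plane of order $q^2$. Now I would go through the three line orbits under the shift group. The line $L_\infty$ contains no affine points, so it meets $\Oval_c$ only in $(\infty)$, giving a single intersection. Each line $N_a$ consists of $(\infty)$ together with $\{(a,y):y\in\F_{q^2}\}$, so it meets $\Oval_c$ in the two points $(\infty)$ and $(a,c)$. The interesting case is a line $L_{a,b}$: since $(a)$ is not in $\Oval_c$, intersection points correspond to solutions of $f(x+a)=b+c$, i.e.\ to preimages of $b+c$ under $f$.

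Here the hypothesis that $f$ is \emph{normal} is exactly what is needed. Because $f(0)=0$ and $f(u_1)=f(u_2)$ forces $u_1=\pm u_2$, every fibre $f^{-1}(d)$ has cardinality $0$, $1$, or $2$ (with $|f^{-1}(0)|=1$ accounted for by $0=-0$, and $|f^{-1}(d)|\in\{0,2\}$ for $d\neq 0$). Therefore $L_{a,b}\cap \Oval_c$ has at most two points, completing the verification that $\Oval_c$ is an oval in $\Pi(f)$.

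For the last assertion, observe that
\[
\U_\theta \;=\; \{(\infty)\}\cup\bigcup_{t\in\F_q}\bigl\{(x,t\theta):x\in\F_{q^2}\bigr\} \;=\; \bigcup_{t\in\F_q}\Oval_{t\theta},
\]
since the $q$ ovals $\Oval_{t\theta}$ all share the common point $(\infty)$ and have pairwise disjoint affine parts (the second coordinates $t\theta$ are distinct for distinct $t$, as $\theta\neq 0$). This exhibits $\U_\theta$ explicitly as a union of the ovals just constructed, and is essentially a bookkeeping step once the oval property of each $\Oval_c$ is in hand. I do not anticipate a serious obstacle: the only substantive point is invoking normality of $f$ to bound the fibres of $f$ by $2$, and every known odd-order planar function used in Theorems \ref{th:many.planar.functions.1} and \ref{th:many.planar.functions} is normal.
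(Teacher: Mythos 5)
Your proof is correct and follows essentially the same route as the paper: check the intersection of $\Oval_c$ with the three line types $L_\infty$, $N_a$ and $L_{a,b}$, using normality of $f$ to bound the fibres $f^{-1}(d)$ by two, and then observe $\U_\theta=\bigcup_{t\in\F_q}\Oval_{t\theta}$ directly from the definitions. Your write-up merely spells out the cardinality checks that the paper leaves as routine.
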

\begin{proof}
	As $f$ is normal, for each given $b\in \F_{q^2}$, there are at most two solution such that $f(x)=b$. By checking the cardinalities of the points in $\Oval_c\cap L_{a,b}$, $\Oval_c\cap N_a$ and $\Oval_c\cap L_\infty$, we see that $\Oval_c$ is an oval in $\Pi(f)$. Therefore $\U_\theta= \bigcup_{t\in \F_q} \Oval_{t\theta}$.
%	
%	Next we proceed to show that all planar functions in Theorems \ref{th:many.planar.functions.1}, \ref{th:many.planar.functions} and \ref{coro:BH-planar} satisfy this assumption. In Theorem \ref{th:many.planar.functions.1}, it follows immediately from \eqref{eq:coro.condition.1}. The planar functions in Theorems \ref{th:many.planar.functions.1}, \ref{th:many.planar.functions} are all of Dembowski-Ostrom type.  By Corollary 1 in \cite{kyureghyan_theorems_2008}, for each $a\in \F_{q^2}$
%	\[\#\{x:f(x)=a \} = \left\{
%	  \begin{array}{ll}
%	    1, & \hbox{$a=0$;} \\
%	    2, & \hbox{otherwise.} 
%	  \end{array}
%	\right.\qedhere
%	\]
\end{proof}

\begin{remark}
	\begin{enumerate}[label=(\alph*)]
	\item A family of unitals in $PG(2,q^2)$, each of which is a union of ovals, were independently discovered by Hirschfeld and Sz\"{o}nyi \cite{hirschfeld_sets_1991} and by Baker and Ebert \cite{baker_intersection_1990}. Actually, Baker and Ebert \cite{baker_buekenhout-metz_1992} also showed that this family of unitals is a special subclass of the Buekenhout-Metz ones in $PG(2,q^2)$. Noting that $\Pi(f)$ is desarguesian \cite{dembowski_planes_1968}, we can readily verify that our unitals $\U_\theta$ in $\Pi(x^2)$ (see Theorem \ref{th:many.planar.functions.1} (a)) are exactly the family of unitals obtained in \cite{baker_intersection_1990,hirschfeld_sets_1991}. This result can also be derived from Proposition \ref{prop:unital=ovals} and \cite[Theorem 1.1]{durante_unitals_2013}.
	
	An analogous result for Albert's twisted field planes (Theorem \ref{th:many.planar.functions.1} (b)) can be found in \cite{abatangelo_transitive_2001} by Abatangelo, Korchm\'{a}ros and Larato.

	\item Bukenhout-Metz type of unitals can be also obtained for Dickson's semifield planes and the ones constructed in \cite{zhou_new_2013}. We refer to \cite{zhou_parabolic_2015}, in which the unitals obtained in Theorem \ref{th:many.planar.functions} (a) and (b) appear as a subclass of a family of Bukenhout-Metz type  of unitals.
	\end{enumerate}
\end{remark}

Given two unitals $\U_1$ and $\U_2$, we say that they are \emph{isomorphic} if there is a design isomorphism between them, i.e.\ there is a bijection between their point sets which maps the blocks of $\U_1$ to the blocks of $\U_2$. When $\U_1$ and $\U_2$ can be embedded into the same projective plane $\Pi$, we say they are \emph{equivalent} if there is a collineation of $\Pi$ mapping $\U_1$ to $\U_2$. For a unital $\U$, we use $\Aut(\U)$ to denote its automorphism group. When $\U$ can be embedded in a projective plane $\Pi$, we denote the group of all the collineation of $\Pi$ fixing $\U$ by $\Aut_\Pi(\U)$.

\begin{proposition}\label{prop:unital_powerplanar_orbit}
	Let $d$ be a positive integer and $f(x)=x^d$ a planar function on $\F_{q^2}$. Let $p=\mathrm{char}(\F_q)$. For $c\in\F_{q^2}^*$ and $\sigma\in \Gal(\F_q/\F_p)$,  $\gamma_{c,\sigma}$ on the points of $\Pi(f)$ is defined by
	\begin{eqnarray*}
		(x,y) &\mapsto& (\sigma(cx),\sigma(c^dy)),\\
		(a)&\mapsto& (\sigma(ca)),\\
		(\infty)&\mapsto& (\infty).
	\end{eqnarray*}
	Then all $\gamma_{c,\sigma}$ together form a group $\Gamma$ as a collineation group of $\Pi(f)$. In $\Pi(f)$, all the unitals $\U_{\theta}$ constructed in Theorem \ref{th:many.planar.functions.1} are equivalent.
\end{proposition}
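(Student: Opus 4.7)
The plan is to first verify that each $\gamma_{c,\sigma}$ is a collineation of $\Pi(f)$ and that these maps compose to form a group, and then to exploit this action on the family $\{\U_\theta\}$ to deduce their pairwise equivalence.

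For the collineation property I would check line-preservation directly. The map sends $(x,(x+a)^d-b)\in L_{a,b}$ to $(\sigma(cx),\sigma(c^d((x+a)^d-b)))$, and using $\sigma(c^d(x+a)^d)=(\sigma(cx)+\sigma(ca))^d$ this simplifies to $(\sigma(cx),(\sigma(cx)+\sigma(ca))^d-\sigma(c^db))$, which lies in $L_{\sigma(ca),\sigma(c^db)}$; moreover the point $(a)$ is sent to $(\sigma(ca))$, so $\gamma_{c,\sigma}$ sends $L_{a,b}$ onto $L_{\sigma(ca),\sigma(c^db)}$. A similar (easier) computation shows $N_a\mapsto N_{\sigma(ca)}$ and $L_\infty\mapsto L_\infty$. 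A direct composition computation gives $\gamma_{c,\sigma}\circ\gamma_{c',\sigma'}=\gamma_{(\sigma')^{-1}(c)\cdot c',\,\sigma\sigma'}$, so $\Gamma$ is a well-defined subgroup of $\Aut(\Pi(f))$.

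For the equivalence part, applying $\gamma_{c,\mathrm{id}}$ to a generic point $(x,t\theta)$ of $\U_\theta$ yields $(cx,\,tc^d\theta)$, hence $\gamma_{c,\mathrm{id}}(\U_\theta)=\U_{c^d\theta}$. Note that $\U_\theta$ depends only on the coset $\theta\F_q^*$, and the hypothesis of Theorem \ref{th:many.planar.functions.1} amounts to requiring $\theta^{q+1}$ to be a nonsquare in $\F_q^*$. Given two such $\theta,\theta'$, the element $(\theta'/\theta)^{q+1}$ is a quotient of two nonsquares in $\F_q^*$, hence a square in $\F_q^*$. Since an element of $\F_{q^2}^*$ is a square if and only if its norm to $\F_q^*$ is a square, $\theta'/\theta$ is itself a square in $\F_{q^2}^*$.

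Finally, for each of the three planar functions in Theorem \ref{th:many.planar.functions.1} one has $\gcd(d,q^2-1)=2$: for $d=2$ this is immediate since $q$ is odd; for $d=p^k+1$ with the stated arithmetic condition it is standard; and for the Coulter-Matthews exponent it is recorded in the paragraph preceding Lemma \ref{le:basic}. Consequently $c\mapsto c^d$ maps $\F_{q^2}^*$ onto the subgroup of squares of $\F_{q^2}^*$, so we can find $c\in\F_{q^2}^*$ with $c^d=\theta'/\theta$, giving $\gamma_{c,\mathrm{id}}(\U_\theta)=\U_{\theta'}$. The main obstacle here is purely bookkeeping — correctly tracking the action modulo $\F_q^*$ and identifying the image subgroup $\{c^d:c\in\F_{q^2}^*\}$ — and no ingredient beyond the norm-square relation between $\F_{q^2}^*$ and $\F_q^*$ is required; in particular the Galois component $\sigma$ is not needed for the equivalence statement, though it will presumably be useful elsewhere.
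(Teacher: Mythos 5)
Your proposal is correct and follows essentially the same route as the paper: verify that $\gamma_{c,\sigma}$ permutes the lines $L_{a,b}$, $N_a$, $L_\infty$, observe that $\gamma_{c,\mathrm{id}}$ sends $\U_\theta$ to $\U_{c^d\theta}$, and use that the admissible $\theta$'s form a single square class of $\F_{q^2}^*$ swept out by the values $c^d$. You merely make explicit two facts the paper leaves implicit (the norm criterion for squares in $\F_{q^2}^*$ and $\gcd(d,q^2-1)=2$), which is fine.
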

\begin{proof}
	First it is straightforward to check that under $\gamma_{c,\sigma}$, the line $L_{a,b}$ is mapped to $L_{\sigma(ca), \sigma(c^db)}$, the line $N_a$ is mapped to $N_{\sigma(ca)}$ and $L_\infty$ is fixed. Hence $\gamma_{c,\sigma}$ is a collineation. It is also not difficult to see that under the composition of maps, the elements in $\Gamma$ form a group.

	Let $\theta$ be an element in $\F_{q^2}^*$ satisfying that $\theta^{q+1}$ is a nonsquare in $\F_{q}$ (see Theorem \ref{th:many.planar.functions.1}), i.e.\ $\theta$ is an odd power of a primitive element of $\F_{q^2}$. Applying $\gamma_{c,\sigma}$ on $\U_\theta$, we get $\U_{\sigma(c^d\theta t)}$. As $\sigma(c^d\theta t)=(\sigma(c^dt)\frac{\sigma(\theta)}{\theta}) \cdot \theta$ and the set $\left\{\sigma(c^dt)\frac{\sigma(\theta)}{\theta} : c\in \F_{q^2}^* \right\}$ covers all the nonzero squares in $\F_{q^2}$, we see that all the unitals $\U_{a^2\theta}$ for $a\in \F_{q^2}^*$ are in one orbit under $\gamma_{c,\sigma}$. Hence all the unitals $\U_{\theta'}$, where $\theta'$ satisfies the condition in Theorem \ref{th:many.planar.functions.1}, are equivalent.
\end{proof}

Let $\U$ be a unital of order $n$ embedded in a projective plane $\Pi$. We can obtain a new design $\U^*$ in the dual plane $\Pi^*$ by taking the tangent lines to $\U$ as the points of $\U^*$ and the points of $\Pi\setminus \U$ as the blocks of $\U^*$. The incidence of $\U^*$ is given by reverse containment. It is easy to see that $\U^*$ is another $2$-$(n^3+1,n+1,1)$ design, which is called the \emph{dual unital} of $\U$. If $\U$ and $\U^*$ are isomorphic as designs, then $\U$ is called \emph{self-dual}.

Let $f$ be a planar function and $\Pi(f)$ the corresponding shift plane. Let $\Pi(f)^*$ be its dual plane. 
The points in $\Pi(f)^*$ are the three types of lines $L_{a,b}$, $N_a$ and $L_\infty$ in $\Pi(f)$. 

If $y=f(x+a)-b$, then clearly $b=f(a+x)-y$. That means the line $L_{x,y}$ in $\Pi(f)$ contains $(a,b)$ if and only if the point $L_{a,b}$ in $\Pi(f)^*$ is on the line $(x,y)$. For $N_a$'s and $L_\infty$, we can also get similar results. That means if we switch the notations of points and lines in $\Pi(f)$, i.e.
\begin{align*}
	(a,b) &\leftrightarrow L_{a,b},\\
	(a)  &\leftrightarrow N_a,\\
	(\infty) &\leftrightarrow L_\infty,
\end{align*}
then we get $\Pi(f)^*$.
\begin{proposition}
	Let $\U_\theta$ be the unital of order $q$ in $\Pi(f)$ defined in Proposition \ref{prop:general} where $f$ is a planar function. Then $\U_\theta$ is self-dual.
\end{proposition}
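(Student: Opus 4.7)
The plan is to exhibit an explicit polarity $\phi$ of $\Pi(f)$—the one already described immediately above the statement—and then to show that its restriction to $\U_\theta$ is the desired design isomorphism between $\U_\theta$ and $\U_\theta^*$. Concretely, $\phi$ is the duality induced by the identifications $(a,b)\leftrightarrow L_{a,b}$, $(a)\leftrightarrow N_a$, and $(\infty)\leftrightarrow L_\infty$. The computation $b=f(a+x)-y\iff y=f(x+a)-b$ (together with the trivial analogues involving $N_a$ and $L_\infty$) already recorded in the text shows that $\phi$ preserves incidence, and since $\phi^2=\mathrm{id}$, it is a polarity of $\Pi(f)$.

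First I would check that $\phi$ maps the point set of $\U_\theta$ bijectively onto the point set of $\U_\theta^*$, i.e.\ onto the set of tangent lines to $\U_\theta$. By Proposition~\ref{prop:general}, the affine tangent lines to $\U_\theta$ are precisely the $L_{a,s\theta}$ with $a\in\F_{q^2}$ and $s\in\F_q$, and $L_\infty$ is the unique tangent at $(\infty)$. Since $\U_\theta=\{(a,s\theta):a\in\F_{q^2},\,s\in\F_q\}\cup\{(\infty)\}$, the map $\psi:=\phi|_{\U_\theta}$ is a bijection from the $q^3+1$ points of $\U_\theta$ onto the $q^3+1$ tangent lines of $\U_\theta$, i.e.\ onto the points of $\U_\theta^*$.

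Finally I would verify that $\psi$ sends blocks of $\U_\theta$ to blocks of $\U_\theta^*$. Any block of $\U_\theta$ has the form $\ell\cap\U_\theta$ for some secant line $\ell$, which is either an $L_{a,b}$ with $b\notin\F_q\theta$ or some $N_a$; in both cases $\phi(\ell)\notin\U_\theta$ from the description of $\U_\theta$. For each $P\in\ell\cap\U_\theta$, the incidence-preservation of $\phi$ gives that $\phi(P)$ is a tangent line passing through $\phi(\ell)$. Hence $\psi(\ell\cap\U_\theta)$ is contained in the block of $\U_\theta^*$ determined by $\phi(\ell)$, and since both sets have cardinality $q+1$, they must coincide.

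There is no genuine obstacle here: once the polarity $\phi$ is in hand and Proposition~\ref{prop:general} is used to pin down the tangent lines, self-duality is almost a tautology. The mild bookkeeping consists only of treating the two types of secants ($L_{a,b}$ with $b\notin\F_q\theta$ and $N_a$) separately, but the argument is identical in each case.
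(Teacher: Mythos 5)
Your proposal is correct and follows essentially the same route as the paper: it uses the tangent-line characterization from Proposition \ref{prop:general} to see that the tangent lines are exactly $L_{x,t\theta}$ together with $L_\infty$, and then the natural point--line switching $(a,b)\leftrightarrow L_{a,b}$, $(a)\leftrightarrow N_a$, $(\infty)\leftrightarrow L_\infty$ identifies $\U_\theta^*$ with $\U_\theta$. You merely spell out more explicitly (via the induced polarity and the block-by-block cardinality count) what the paper compresses into ``switching the notations of points and lines''.
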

\begin{proof}
	By Proposition \ref{prop:general}, we know that $L_{a,b}$ is a tangent line if and only if $b_0\theta_1=b_1\theta_0$, i.e.
	\[b= \left\{
	  \begin{array}{ll}
	    \frac{b_0}{\theta_0}\theta, & \hbox{$\theta_0\neq0$;} \\
	    \frac{b_1}{\theta_1}\theta, & \hbox{$\theta_1\neq1$.}
	  \end{array}
	\right.
	\]
	As the constant $\theta\neq 0$, $b=s \theta$ for certain $s\in \F_q$. Together with the tangent line $L_\infty$, we know that the dual of $\U_\theta$ in $\Pi(f)^*$ can be written as 
	\[\U_\theta^*=\{L_{x,t\theta}: x\in\F_{q^2}, t\in \F_q \} \cup \{L_\infty\}.\]
	Switching the notations of points and lines in $\U_\theta^*$ and $\Pi(f)^*$, we get $\U_\theta$ and $\Pi(f)$. Therefore $\U_\theta$ is self-dual. 
\end{proof}

\subsection{Circles and Wilbrink' condition \RN{2}}\label{subsec:circles}
Let $f$ be a planar function on $\F_{q^2}$ and $\Pi(f)$ the corresponding projective plane. Let $\U_\theta$ be a unital in $\Pi(f)$ defined by \eqref{eq:u_theta_general}. As a design, its point set is $\{(x,t\theta): x \in \F_{q^2}, t\in \F_q\}\cup \{(\infty)\}$ and all of its blocks are
\[B_{a}:= \{(a,t\theta): t\in \F_q\}\cup \{(\infty) \},\]
for each $a\in \F_{q^2}$ and
\[B_{a,b}:=\{(x,t\theta): f(x+a)-b=t\theta, t\in \F_q \}, \]
for each $a,b\in \F_{q^2}$ where $b_0\theta_1-b_1\theta_0\neq 0$. There are totally $q^4-q^3+q^2$ blocks and each of them contains $q+1$ points. For each pair of points, there is exactly one block containing them both. Hence $\U_\theta$ is a $2$-$(q^3+1,q+1,1)$-design.

We denote 
\begin{equation}\label{eq:definition_circles}
	C_{a,\beta(b)}:=\{x : f_0(x+a)\theta_1-f_1(x+a)\theta_0=\beta(b)\},
\end{equation}
where $\beta(b)=b_0\theta_1-b_1\theta_0\neq 0$. Clearly, $C_{a,\beta(b)}$ is the set of the elements appearing in the first coordinate of the elements in $B_{a,b}$; in other words, $C_{a,\beta(b)}$ can be viewed as a projection of $B_{a,b}$. When the context is clear, we omit $b$ and write $C_{a,\beta(b)}$ as $C_{a,\beta}$. Inspired by the approach of O'Nan in \cite{onan_automorphisms_1972}, we call $C_{a,\beta}$ a \emph{circle} for $a\in \F_{q^2}$ and $\beta\in \F_q^*$. By setting $\phi(x):=f_0(x)\theta_1-f_1(x)\theta_0$, we can shortly write a circle as
\[C_{a,\beta}=\{x: \phi(x+a)=\beta\}.\]
By choosing appropriate $\delta\in \F_{q^2}^*$, we can also write 
\[\phi(x)=\Tr_{q^2/q}(\delta f(x)). \]
\begin{lemma}\label{le:circles}
	Let $\mathscr{C}_\theta$ denote the set of circles derived from $\U_\theta$. Then following statements hold.
	\begin{enumerate}[label=(\alph*)]
		\item $\#C_{a,\beta}=q+1$, for arbitrary $a\in \F_{q^2}$ and $\beta\in \F_q^*$.
%		\left\{
%		  \begin{array}{ll}
%		    q+1, & \hbox{$\beta\neq0$;} \\
%		    1, & \hbox{$\beta=0$.}
%		  \end{array}
%		\right.$.
		\item For each $a\in \F_{q^2}$, the set $\{C_{a,\beta}: \beta\in \F_{q}^* \}$ forms a partition of $\F_{q^2}\setminus\{-a\}$.
		\item All the blocks $B_{a,b}$ which intersect $B_{u}$ are projected to $C_{a,\phi(u+a)}$.
		\item $C_{a,\beta}=C_{a',\beta'}$ if and only if $a=a'$ and $\beta=\beta'$.
		\item $\# \mathscr{C}_\theta = q^3-q^2$.
	\end{enumerate}
\end{lemma}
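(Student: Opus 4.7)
The plan is to exploit throughout that each circle is a translate $C_{a,\beta} = \phi^{-1}(\beta) - a$ of a level set of $\phi(y) := \theta_1 f_0(y) - \theta_0 f_1(y) = \Tr_{q^2/q}(\delta f(y))$. Proposition \ref{prop:general} already records $\#\phi^{-1}(\beta) = q+1$ for $\beta \in \F_q^*$ and $\#\phi^{-1}(0) = 1$, so (a) is just this cardinality statement transported by the shift $-a$. For (b), I would note that $f(0)=0$ forces $\phi(0)=0$, making $0$ the unique zero of $\phi$; translating by $-a$ then gives the partition of $\F_{q^2}\setminus\{-a\}$. For (c), I would split the equation $f(u+a)-b=t\theta$ into its two $\F_q$-components exactly as in the proof of Proposition \ref{prop:general} to conclude that $B_{a,b}$ meets $B_u$ if and only if $\phi(u+a)=\beta(b)$; hence the first coordinates of any such block form the single circle $C_{a,\phi(u+a)}$.

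The substantive claim is (d). One direction is immediate, and in the forward direction the case $a=a'$ falls to (b). So I would assume for contradiction that $C_{a,\beta}=C_{a',\beta'}$ with $v:=a-a'\neq 0$; the equality of translates then reads $\phi^{-1}(\beta)=\phi^{-1}(\beta')+v$. Consequently the difference map $h(y):=\phi(y+v)-\phi(y)$ takes the single value $\beta-\beta'$ on every $y\in\phi^{-1}(\beta')$, producing at least $q+1$ preimages of one element of $\F_q$.

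The main obstacle is refuting this $q+1$ count, and the plan here is to invoke the planarity of $f$. Writing $h(y)=\Tr_{q^2/q}(\delta(f(y+v)-f(y)))$ and using that $y\mapsto f(y+v)-f(y)$ is a bijection of $\F_{q^2}$ by planarity (together with $\delta\neq 0$), while $z\mapsto \Tr_{q^2/q}(\delta z)$ is a surjective $\F_q$-linear map with fibres of constant size $q$, I conclude that $h$ attains every value in $\F_q$ exactly $q$ times. This contradicts the $q+1$ preimages found above, forcing $a=a'$ and then $\beta=\beta'$ via (b). Finally, (e) is a pure count from (d): the circles are parametrized injectively by $(a,\beta)\in\F_{q^2}\times\F_q^*$, yielding $q^2(q-1)=q^3-q^2$ circles in total.
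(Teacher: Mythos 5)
Your proposal is correct and takes essentially the same approach as the paper: (a)--(c) and (e) are the same direct observations, and your argument for (d) --- extracting at least $q+1$ solutions of $\phi(y+v)-\phi(y)=\beta-\beta'$ from the equality of circles and refuting this because planarity of $f$ together with the size-$q$ fibres of $z\mapsto\Tr_{q^2/q}(\delta z)$ gives exactly $q$ solutions --- is precisely the paper's contradiction, merely rephrased in terms of translates of level sets of $\phi$.
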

\begin{proof}
	(a) and (b) follows directly from  the definition of $C_{a,\beta}$. 
	
	(c). When $B_{a,b}$ intersects $B_{u}$, that means $f(u+a)-b=s\theta$ for certain $s\in \F_q$. Together with \eqref{eq:general.first}, we have
	\[\phi(u+a)=f_0(u+a)\theta_1-f_1(u+a)\theta_0=b_0\theta_1-b_1\theta_0=\beta(b).\] Hence $B_{a,b}$ is projected to $C_{a,\phi(u+a)}$.
	
%	 Obviously $\beta$ and $\beta'$ have to be simultaneously $0$ or not. If $\beta=\beta'=0$, then $C_{a,\beta}=\{-a\}$ and $C_{a',\beta'}=\{-a'\}$. It follows that $a=a'$ and $\beta=\beta'$. 
%	
	(d). Assume that $\beta$ and $\beta'$ are both not $0$, $C_{a,\beta}=C_{a',\beta'}$ and $a\neq a'$. It follows that there are at least $q+1$ elements $x$ in $\F_{q^2}$ such that
	\[\phi(x+a')-\phi(x+a)=\beta' - \beta,\]
	which equals
	\begin{equation}\label{eq:circles.same}
		\Tr_{q^2/q}(\delta (f(x+a')-f(x+a)))=\beta'-\beta.
	\end{equation}
	As there are totally $q$ elements $c$ in $\F_{q^2}$ such that $\Tr_{q^2/q}(c)=\beta'-\beta$ and the map defined by $x\mapsto f(x+a')-f(x+a)$ is a permutation, there are exactly $q$ elements such that \eqref{eq:circles.same} holds. It is a contradiction. Hence $a=a'$. From (b), we immediately get that $\beta=\beta'$.
	
	(e) follows from (d) using a simple counting argument.
\end{proof}
\begin{corollary}
	$(\F_{q^2}, \mathscr{C}_\theta)$ is a $(q^2,q+1,q)$-design.
\end{corollary}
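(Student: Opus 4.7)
The plan is to read off $v = q^2$ and $k = q+1$ directly (the point set is $\F_{q^2}$ and Lemma \ref{le:circles}(a) gives the block size) and then verify that every unordered pair of distinct points lies in exactly $\lambda = q$ circles. Lemma \ref{le:circles}(e) will serve as a consistency check via the standard identity $b = vr/k$ with $r = \lambda(v-1)/(k-1) = q^2 - 1$, which indeed returns $b = q^3 - q^2$.

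Fix distinct $x, y \in \F_{q^2}$. I would unwind the definition of a circle: both $x$ and $y$ belong to $C_{a,\beta}$ if and only if $\phi(x+a) = \phi(y+a) = \beta$ with $\beta \in \F_q^*$. By Lemma \ref{le:circles}(d) distinct parameters $(a,\beta)$ produce distinct circles, and $\beta$ is determined once $a$ is known. So the whole question reduces to counting the values $a \in \F_{q^2}$ for which $\phi(x+a) = \phi(y+a)$ and the common value is nonzero.

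The main computation mirrors the proof of Lemma \ref{le:circles}(d). Writing $\phi(x+a) - \phi(y+a) = \Tr_{q^2/q}\bigl(\delta(f(x+a) - f(y+a))\bigr)$, I invoke the planar property of $f$: since $x - y \neq 0$, the map $a \mapsto f(x+a) - f(y+a)$ is a bijection on $\F_{q^2}$. Composing with the surjective $\F_q$-linear trace form $v \mapsto \Tr_{q^2/q}(\delta v)$, whose kernel has size $q$, yields exactly $q$ values of $a$ satisfying $\phi(x+a) = \phi(y+a)$.

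It remains to rule out the degenerate case $\beta = 0$. By the hypothesis on $\phi$ imposed in Proposition \ref{prop:general}, the equation $\phi(z) = 0$ has a unique solution in $\F_{q^2}$, so $\phi(x+a) = \phi(y+a) = 0$ would force $x+a = y+a$, contradicting $x \neq y$. All $q$ solutions therefore produce circles with $\beta \in \F_q^*$, and the pair $\{x, y\}$ lies in exactly $q$ circles. I do not foresee a serious obstacle here: the planar-function property plus a one-line trace-kernel count do all the work, and the non-vanishing check is an immediate pigeonhole argument.
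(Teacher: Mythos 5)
Your proof is correct and follows essentially the same route as the paper: reduce the pair-count to counting the $a\in\F_{q^2}$ with $\phi(u+a)=\phi(v+a)$, then use planarity of $f$ together with the size-$q$ kernel of the trace form $z\mapsto \Tr_{q^2/q}(\delta z)$ to get exactly $q$ such $a$. The only difference is that you explicitly rule out the degenerate common value $\beta=0$ via the hypothesis of Proposition \ref{prop:general}, a point the paper's proof leaves implicit.
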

\begin{proof}
	We only have to prove that for each two difference elements $u,v\in\F_{q^2}$, there are exactly $q$ circles containing them. 
	
	By Lemma \ref{le:circles} (c), the circle $C_{a,\beta}$ contains $u$ and $v$ if and only if $\beta=\phi(u+a)=\phi(v+a)$, which is equivalent to
	\[\Tr_{q^2/q}(\delta(f(u+a)-f(v+a))) = 0.\]
	By the same argument in the proof of Lemma \ref{le:circles} (d), there are exactly $q$ elements $a\in \F_{q^2}$ such that the above equation holds. Therefore there are exactly $q$ circles containing $u$ and $v$.
\end{proof}

In \cite{wilbrink_characterization_1983}, Wilbrink characterized the classical unital by three intrinsic conditions. The Wilbrink's condition \RN{2} on a unital $\U$ is as follows.
\begin{definition}\label{de:wilbrink}
	Let $v$ be a point of a unital $\U$. Let $B$ be a block such that $v\not \in B$. Let $C$ be a block which contains  $v$ and meets $B$, and $w$ a point in $C$ which is distinct from $v$ and $B\cap C$. If there exists a block $B'\neq C$ such that $w\in B'$ and $B'$ meets all blocks which contain $v$ and meet $B$, then  we call $v$ a \emph{vertex of Wilbrink's condition \RN{2}}; see Figure \ref{fig:wilbrink}. If $v$ is such a vertex for all the blocks $C$, $B$ and point $w$ in $C$ satisfying aforementioned conditions, then $v$ is called a  \emph{vertex of Wilbrink's condition \RN{2} in strong form} \cite{hui_non-classical_2013}.
\end{definition}

\begin{figure}[t]
\centering
\includegraphics[scale=0.45]{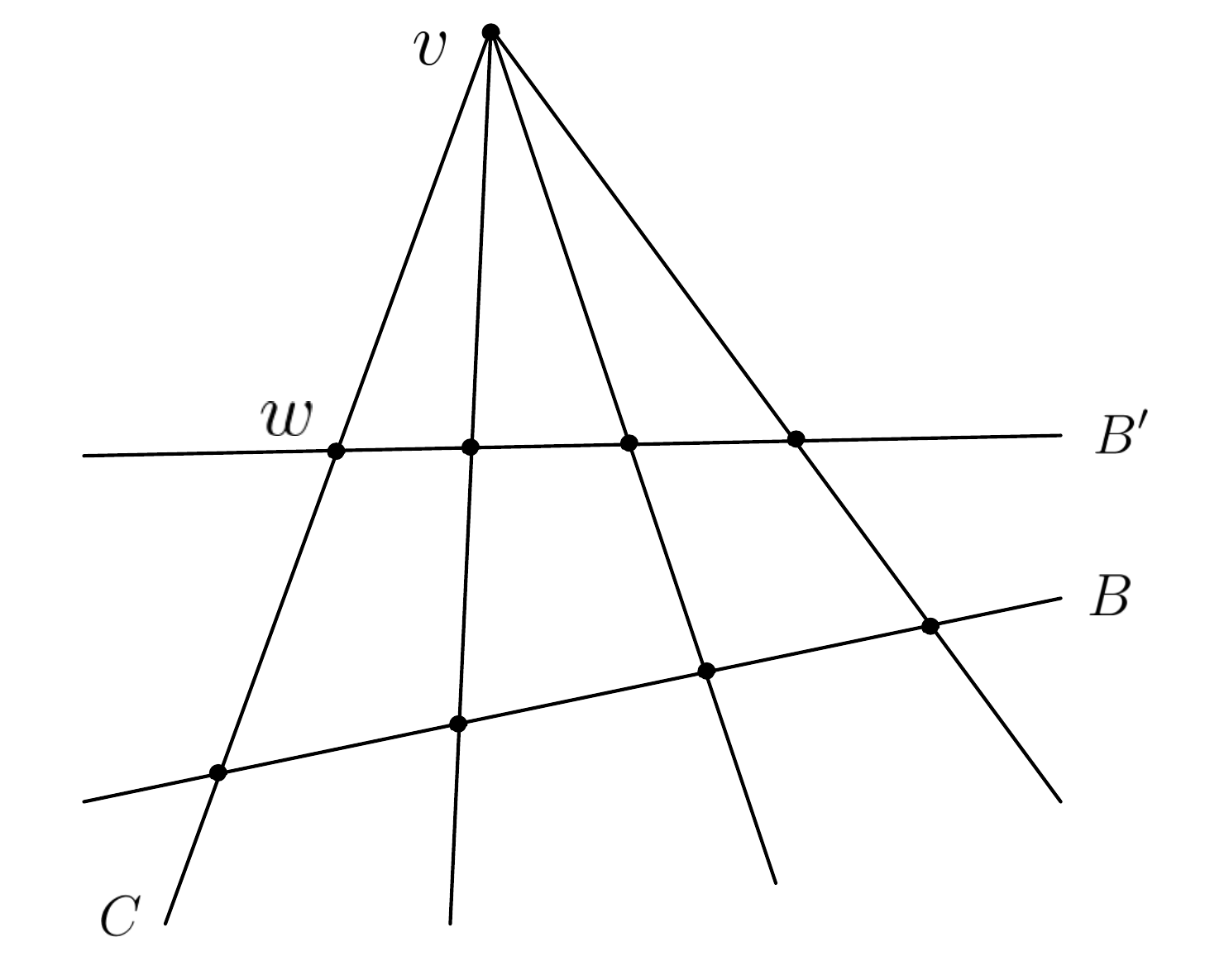}
\caption{Wilbrink's condition \RN{2}}\label{fig:wilbrink}
\end{figure}

A point $v$ in $\U$ is a vertex of Wilbrink's condition \RN{2} in strong form together with the nonexistence of an O'Nan configuration (see Subsection \ref{subsec:onan}) containing $v$ imply that $B$ and $B'$ in above definition have no common points. Hence there is a parallelism defined on the blocks of $\U$ not containing $v$, i.e.\ two blocks $B$ and $B'$ of $\U$ are parallel if they meet the same blocks containing $v$. It can be proved that all the points in a Desarguesian plane are vertices of Wilbrink's condition \RN{2} in strong form; see \cite[Lemma 7.42]{barwick_unitals_2008}. 
%\begin{corollary}\label{coro:wilbrink_infty}
%	In $\U_\theta$, $(\infty)$ is a vertex of Wilbrink's condition \RN{2} in strong form.
%\end{corollary}

We can prove the following result for our unitals.
\begin{proposition}\label{prop:wilbrinkII}
	Let $f$ be a normal planar function on $\F_{q^2}$ and $\U_\theta$ a unital in $\Pi(f)$ defined in Proposition \ref{prop:general}. In $\U_\theta$, $(\infty)$ is a vertex of Wilbrink's condition \RN{2} in strong form.
\end{proposition}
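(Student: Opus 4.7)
The plan is to use the explicit block structure of $\U_\theta$ recalled at the start of Section \ref{subsec:circles}, fix $v=(\infty)$, and then for each admissible triple $(B,C,w)$ from Definition \ref{de:wilbrink} exhibit an explicit $B'$ with the required property. I parametrize $C=B_u$ for some $u\in\F_{q^2}$ and $B=B_{a_0,b_0}$ with $\beta(b_0)\neq 0$ (forced by $v\notin B$). Lemma \ref{le:circles}(c) identifies the blocks through $v$ meeting $B$ as the $B_{u'}$ with $u'\in C_{a_0,\beta(b_0)}$, and the hypothesis $B\cap C\neq\emptyset$ becomes $u\in C_{a_0,\beta(b_0)}$, i.e.\ $\phi(u+a_0)=\beta(b_0)$. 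The unique point of $B\cap C$ is $(u,s_0\theta)$ where $s_0\theta=f(u+a_0)-b_0$, so any admissible $w$ takes the form $w=(u,t\theta)$ with $t\in \F_q\setminus\{s_0\}$.

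The key observation is that a suitable $B'$ is essentially forced. If $B'=B_{a',b'}$ meets every $B_{u'}$ with $u'\in C_{a_0,\beta(b_0)}$, then by Lemma \ref{le:circles}(c) we have $C_{a_0,\beta(b_0)}\subseteq C_{a',\beta(b')}$, and since both circles contain $q+1$ elements by Lemma \ref{le:circles}(a), the inclusion is equality. Lemma \ref{le:circles}(d) then pins down $a'=a_0$ and $\beta(b')=\beta(b_0)$. The requirement $w\in B'$ determines $b'$ uniquely as $b'=f(u+a_0)-t\theta$, yielding the candidate $B'=B_{a_0,\,f(u+a_0)-t\theta}$.

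The final step is verification. A short computation using $\beta(\theta)=0$ gives $\beta(b')=\beta(f(u+a_0))-t\beta(\theta)=\phi(u+a_0)=\beta(b_0)\neq 0$, so $B'$ is a legitimate block of $\U_\theta$ and by Lemma \ref{le:circles}(c) automatically meets every $B_{u'}$ with $u'\in C_{a_0,\beta(b_0)}$. Moreover $B'\neq C$ since $v\in C$ but $v\notin B'$, and $B'\neq B$ precisely because $b'=b_0$ would force $t=s_0$, contrary to $w\neq B\cap C$. I do not anticipate a serious obstacle: the only real subtlety is recognizing that Lemma \ref{le:circles} packages exactly the combinatorial data needed to force the existence and uniqueness of $B'$, so that the strong form of Wilbrink's condition~\RN{2} at $v=(\infty)$ collapses into this short direct check.
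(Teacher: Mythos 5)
Your proof is correct and takes essentially the same route as the paper: you produce exactly the block the paper uses, $B'=B_{a_0,\,f(u+a_0)-t\theta}$ (there written $B_{a,\,f(c+a)-d}$ with $c=u$, $d=t\theta$), and your check $\beta(b')=\beta(b_0)\neq 0$ is the same computation the paper phrases as ``$f(x+a)-b\in\theta\F_q$ if and only if $f(x+a)-f(c+a)+d\in\theta\F_q$.'' The preliminary ``forcing'' argument via Lemma \ref{le:circles} and the extra check $B'\neq B$ are harmless additions not needed for (and not present in) the paper's argument.
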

\begin{proof}
	Let $a,b\in \F_{q^2}$ and $B=B_{a,b}$. Let $C$ be an arbitrary block which contains $(\infty)$ and intersects $B$, namely $C=B_c$ for certain $c\in \F_{q^2}$ such that $f(c+a)-b\in \theta \F_q$. Let $w$ be an arbitrary point on $B_c$ which is different from the intersection point $(c,f(c+a)-b)$ of $B_c$ and $B_{a,b}$. Hence $w=(c, d)$ for certain $d\in \theta\F_{q}$ and $d\neq f(c+a)-b$. Then we can take $B'=B_{a,f(c+a)-d}$. 
	
	From $f(c+a)-b\in \theta \F_q$, we see that for any $x\in \F_{q^2}$, $f(x+a)-b\in \theta \F_q$ if and only if $f(x+a)-f(c+a)+d\in \theta \F_q$. Hence $(\infty)$ satisfies the Wilbrink's condition \RN{2} for $B$, $C$ and $w$. From the arbitrariness of $B$, $C$ and $w$, we derive that the point $(\infty)$ is a vertex of Wilbrink's condition \RN{2} in strong form.
\end{proof}

For different shift planes $\Pi(f)$, we cannot find a generic way to investigate whether the affine points in $\U_\theta$ are vertices of Wilbrink's condition \RN{2} or not. For $f(x)=x^2$, we get the following result.
\begin{theorem}\label{th:wilbrink_x^2}
	Let $q$ be a prime power larger than $3$. Let $f(x)=x^2$ be defined on $\F_{q^2}$ and $\U_\theta$ a unital in $\Pi(f)$ defined in Theorem \ref{th:many.planar.functions.1} (a). In $\U_\theta$, $(\infty)$ is the unique vertex of Wilbrink's condition \RN{2} in strong form.
\end{theorem}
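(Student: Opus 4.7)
The plan is to handle uniqueness; Proposition \ref{prop:wilbrinkII} already gives that $(\infty)$ is a vertex of Wilbrink's condition II in strong form. The subgroup $H=\{\tau_{u,t\theta}:u\in\F_{q^2},\,t\in\F_q\}$ of the shift group preserves $\U_\theta$, fixes $(\infty)$, and acts sharply transitively on the $q^3$ affine points of $\U_\theta$. I therefore reduce to exhibiting, for $v=(0,0)$, a single quadruple $(B,C,w)$ such that no block $B'\neq C$ through $w$ meets every block of $\U_\theta$ containing $v$ and meeting $B$.

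I then take $C=B_0$ and $B=B_{a,b}$ with $a\in\F_{q^2}^{*}$, $\phi(a)=\beta(b)\neq 0$, and $b\neq a^2$, so $B\cap C=(0,s\theta)$ for some $s\in\F_q^{*}$. Because $q\geq 5$, I may choose $r\in\F_q\setminus\{0,s\}$ and set $w=(0,r\theta)$. Under the standing hypothesis that $\theta^{q+1}$ is a nonsquare in $\F_q$, one checks $\phi(c)=0\iff c=0$, so the $q+1$ blocks of $\U_\theta$ through $v$ meeting $B$ split as $C$ together with $q$ blocks $B_{a'_i,(a'_i)^2}$ with pairwise distinct $a'_i\in\F_{q^2}^{*}$. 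Any candidate $B'\neq C$ through $w$ must be $B_{a'',(a'')^2-r\theta}$ for some $a''\in\F_{q^2}^{*}$, and a direct computation of the affine intersection of $B'$ with $B_{a'_i,(a'_i)^2}$ shows that it lies in $\U_\theta$ if and only if
\begin{equation*}
\frac{r\,a'_i}{a'_i-a''}+\frac{r^2\theta}{4(a'_i-a'')^2}\in\F_q. \tag{$\star$}
\end{equation*}

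The case $a''=a$ can be dispatched cleanly. Setting $w_i=a'_i-a$, the defining relation of $B\cap\U_\theta$ reads $a/w_i+s\theta/(4w_i^2)\in\F_q$, while $(\star)$ becomes $a/w_i+r\theta/(4w_i^2)\in\F_q$; subtracting shows $(\star)_{a''=a}$ is equivalent to $(r-s)\theta/(4w_i^2)\in\F_q$. Since $r\neq s$ this would force $w_i^2\in\theta\F_q$; but $w_i^2$ is a square of $\F_{q^2}^{*}$, whereas the nonsquareness of $\theta^{q+1}$ in $\F_q$ implies that every element of $\theta\F_q^{*}$ is a nonsquare of $\F_{q^2}$, so the condition fails for every $i$.

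The main obstacle, and the heart of the argument, is extending this exclusion to every other $a''\in\F_{q^2}^{*}$. My plan is to substitute the parametrization $a'_i=a+s\theta/\bigl(2(u_i-a)\bigr)$---where the $u_i$ run over the $q$ elements of $\{u\in\F_{q^2}:\phi(u)=\phi(a),\,u\neq a\}$---into $(\star)$, and apply the Frobenius $x\mapsto x^q$ to replace the ``$\in\F_q$'' condition by a polynomial identity of degree at most $2q-1$ in $u_i$. The same $u_i$ are the $q$ nonzero roots of the defining polynomial of $B\cap\U_\theta$ (itself of degree $2q$); matching these two polynomial identities on all $q$ common roots, and invoking the nonsquareness property of $\theta\F_q^{*}$ to eliminate parasitic solutions, forces the distinguishing term between $(\star)$ and the defining polynomial to vanish---equivalently $r=s$, contradicting our choice of $r$. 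The detailed polynomial bookkeeping is lengthy but mechanical; the restriction $q>3$ enters only to guarantee $\F_q\setminus\{0,s\}$ is nonempty and to provide enough room in the polynomial matching. Once $(\star)$ fails for every $a''$, the quadruple $(v,B,C,w)$ witnesses that $(0,0)$ is not a vertex in strong form, and by transitivity of $H$ no affine point is.
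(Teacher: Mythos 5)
Your setup is sound up to the point where it matters most, and then it stops. The existence half (via Proposition \ref{prop:wilbrinkII}), the reduction to $v=(0,0)$ by the shift subgroup, the identification of the blocks through $v$ meeting $B$ as $C$ together with $B_{a'_i,(a'_i)^2}$, the parametrization $a'_i=a+s\theta/(2(u_i-a))$, the form $B'=B_{a'',(a'')^2-r\theta}$, the intersection criterion $(\star)$, and the exclusion of $a''=a$ via the fact that $\theta\F_q^*$ consists of nonsquares of $\F_{q^2}$ are all correct. But the case $a''\neq a$ --- which is the entire content of the uniqueness claim --- is only a ``plan'': you assert that substituting the parametrization into $(\star)$, Frobenius-ing away the ``$\in\F_q$'' condition, and ``matching'' the resulting polynomial (degree at most $2q-1$) against the degree-$2q$ polynomial cutting out the $u_i$ ``forces $r=s$''. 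Nothing is computed, and the matching step itself is not a valid inference as stated: a polynomial of degree $2q-1$ vanishing at the $q$ points $u_i$ need not be divisible by anything useful, and the relation $\phi(u_i)=\phi(a)$ only constrains $u_i^q$ through a quadratic, so turning $(\star)$ into an identity that can be compared coefficientwise requires genuine work that is absent.

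There is moreover concrete reason to doubt that the bookkeeping ends in an outright contradiction valid for every $(\theta,a,s,r)$. The paper's own proof, with an even simpler configuration ($B=B_1$ through $(\infty)$, $C=B_0$, $w=(0,2\theta)$), does \emph{not} obtain a contradiction from the analogous polynomial analysis: the divisibility by $t^q-t$ pins down $a=\theta$ and then leaves the residual condition \eqref{eq:wilbrink_theta_final}, a degree-$(2q+1)$ polynomial identity in $\theta$ that may well hold for some admissible $\theta$. The proof is completed only by the equivalence argument of Proposition \ref{prop:unital_powerplanar_orbit}: since the $\frac{q^2-1}{2}$ unitals $\U_{c^2\theta}$ are equivalent and $\frac{q^2-1}{2}>2q+1$, one may pass to an equivalent $\theta$ for which \eqref{eq:wilbrink_theta_final} fails. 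Your argument has no such safety valve; if your polynomial matching likewise terminates in a nontrivial condition on $(\theta,a,s,r)$ rather than in ``$r=s$'', your chosen witness configuration could in fact admit a block $B'$, and the proof collapses. So either carry out the $a''\neq a$ computation in full and show it really yields $r=s$ (or otherwise fails for \emph{all} $a''$), or build in a mechanism --- freedom in $a$, $r$, or the paper's change-of-$\theta$ trick --- to dispose of any residual polynomial conditions. As it stands, the heart of the theorem is asserted, not proved.
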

\begin{proof}
	From Proposition \ref{prop:wilbrinkII}, we already know that $(\infty)$ is a vertex of Wilbrink's condition \RN{2} in strong form.
	
	Assume that $v$ distinct from $(\infty)$ is a vertex of Wilbrink's condition \RN{2} in strong form. As the shift group is transitive on the affine points in $\U_\theta$, we only have to consider the point $v=(0,0)$. We choose the point $w$, the blocks $B$ and $C$ as follows:
	\begin{eqnarray*}
		w &:=& (0,2\theta),\\
		B &:=& B_1=\{(1, t\theta): t\in \F_q\} \cup \{(\infty)\}\\
		C &:=& B_0=\{(0, t\theta): t\in \F_q\} \cup \{(\infty)\}.
	\end{eqnarray*}
	Clearly $v$ and $w$ are both on $C$, and $C$ meets $B$ in point $(\infty)$. According to Definition \ref{de:wilbrink}, there is a block $B'$ such that 
	\begin{itemize}
		\item $B'$ contains $w$,
		\item $B'$ meets all the blocks which contain $v$ and meet $B$.
	\end{itemize}
	
	The first condition implies that there exists $a\in \F_{q^2}$ such that $B'=B_{a, f(a)-2\theta}$.
	 
	The set $\mathcal{B}$ of all the blocks containing $v$, meeting $B$ and distinct from $C$ is
	\[\mathcal{B}=\{ B_{b, f(b)}: f(1+b)-f(b)=2t\theta, t\in \F_q\}.\]
	In other words, for each point $(1,t\theta)$ on $B$, we have a block $B_{b,f(b)}$ containing $(1,t\theta)$ and $v$. The second condition means that for each $B_{b,f(b)}\in \mathcal{B}$, there is unique $s\in \F_q$ such that
	\begin{eqnarray}
		\label{eq:wilbrink_b_s}	f(x+b)-f(b) &=& s \theta,\\
		\label{eq:wilbrink_a_s} f(x+a) - f(a) + 2\theta &=& s \theta.
	\end{eqnarray}
	Obviously $b\neq a$.
	
	According to the assumption, $f(x)=x^2$. Together with the definition of $B_{b,f(b)}$, \eqref{eq:wilbrink_b_s} and \eqref{eq:wilbrink_a_s} we obtain that
	\begin{equation*}
		b=t\theta-\frac{1}{2} \quad\text{ and }\quad x=\frac{\theta}{b-a}.
	\end{equation*}
	Plugging them back into \eqref{eq:wilbrink_b_s}, we have
	\[\frac{\theta^2}{(b-a)^2} + 2 \left(t\theta -\frac{1}{2}\right) \frac{\theta}{b-a} = s\theta.\]
	Simplifying it, we have
	\begin{equation}\label{eq:wilbrink_(b-a)}
		\frac{\theta}{(b-a)^2} + \frac{2t\theta-1}{b-a}=s.
	\end{equation}
	As $s\in \F_q$, the above equation means
	\[\frac{\theta}{(b-a)^2} + \frac{2t\theta-1}{b-a}= \left(\frac{\theta}{(b-a)^2} + \frac{2t\theta-1}{b-a}\right)^q.\]
	Multiplying both hand sides by $(b-a)^{2q+2}$ and simplifying then yields
	\[\theta (b-a)^{2q} + (2t\theta-1)(b-a)^{2q+1} =  \theta^q (b-a)^{2} + (2t\theta^q-1)(b-a)^{2+q}.\]
	Letting $\bar{a}=a+\frac{1}{2}$ and plugging $b-a=t\theta - \bar{a}$ into the above equation, we obtain
	\begin{align*}
		\theta(t^2 \theta^{2q} + \bar{a}^{2q} - 2\bar{a}^q t\theta^q ) + (2t\theta -1 )(t\theta - \bar{a}) (t^2 \theta^{2q} - 2\bar{a}^q t \theta^q +\bar{a}^{2q})& \\
		-\theta^q(t^2 \theta^{2} + \bar{a}^{2} - 2\bar{a} t\theta ) - (2t\theta^q -1 )(t\theta^q - \bar{a}^q) (t^2 \theta^{2} - 2\bar{a} t \theta +\bar{a}^{2})&=0.
	\end{align*}
	If we view $t$ as an indeterminate and the above equation as an element $g$ in the polynomial ring $\F_{q^2}[t]$, then $g$ vanishes on the elements in $\F_q$. That means the polynomial $t^q-t$ divides $g$. Through straightforward calculation, we see that the coefficient of $t^4$ in $g$ is $0$ and that of $t^3$ is
	\[2\theta^2 (-2 \bar{a}^q\theta^q) - (1+2\bar{a})\theta^{2q+1} - 2\theta^{2q} (-2\bar{a} \theta) + (1+2\bar{a}^q) \theta^{q+2}.\]
	This coefficient must be $0$, because $q>3$ and $(t^q-t) \mid g$. Dividing it by $\theta^{q+1}$ and simplifying, we get 
	\[\theta^q(2\bar{a}-1)= \theta(2\bar{a}^q-1).\] 
	Plugging $\bar{a}=a+\frac{1}{2}$ back into it, we deduce that
	\[2a\theta^q= 2a^q \theta,\]
	which means $a=l\theta$ for certain $l\in \F_q$.
	
	Noting that $t$ can be any element in $\F_q$, we take $t=l$. That means $b-a=-\frac{1}{2}$, and from \eqref{eq:wilbrink_(b-a)} we have
	\[4\theta - 4l\theta +2 =s.\]
	It implies that $l=1$, from which it follows that $a=\theta$.
	
	Now we look at the constant term of $g$, which is
	\[\theta \bar{a}^{2q} + \bar{a}^{2q+1} - \theta^q\bar{a}^2-\bar{a}^{q+2}.\]
	As we showed previously, $(t^q-t) \mid g$. Hence the constant term must be zero. Plugging $\bar{a}=a+\frac{1}{2}=\theta + \frac{1}{2}$ back into it, we have
	\begin{equation}\label{eq:wilbrink_theta_final}
		\theta \left(\theta +\frac{1}{2}\right)^{2q} + \left(\theta +\frac{1}{2}\right)^{2q+1} - \theta^q\left(\theta +\frac{1}{2}\right)^2-\left(\theta +\frac{1}{2}\right)^{q+2}=0.
	\end{equation}
	This can be view as a polynomial in $\theta$ of degree $2q+1$. 
	
	By Proposition \ref{prop:unital_powerplanar_orbit}, we know that all the $\frac{q^2-1}{2}$ unitals $\U_{c^2\theta}$ are equivalent, where $c\in \F_{q^2}^*$. As $\frac{q^2-1}{2}> 2q+1$, we may replace $\theta$ by some $c^2\theta$ such that  \eqref{eq:wilbrink_theta_final} does not hold anymore. This replacement does not affect the previous part of the proof, because we never used the value of $\theta$ there. Therefore, in the unital $\U_{c^2\theta}$ we cannot find a block $B_a$ (because \eqref{eq:wilbrink_theta_final} fails) such that the Wilbrink condition \RN{2} holds.
\end{proof}

\begin{remark}
	For $q=3$, we can use MAGMA \cite{Magma} to show that Theorem \ref{th:wilbrink_x^2} also holds. For other planar functions, it is also possible to investigate the same question. However, if we follow the steps of the proof of Theorem \ref{th:wilbrink_x^2}, we will see the difficulties in solving \eqref{eq:wilbrink_b_s} and \eqref{eq:wilbrink_a_s} as well as more complicated calculations.
\end{remark}

\subsection{Existence of O'Nan configurations}\label{subsec:onan}
An \emph{O'Nan configuration} is a collection of four lines intersecting in six points. In \cite{onan_automorphisms_1972}, O'Nan first considered this configuration, and he proved that it does not exist in the classical unitals.

In \cite{piper_unitary_1979}, Piper conjectured that the nonexistence of O'Nan configuration is also a sufficient condition for a unital to be classical. In \cite{wilbrink_characterization_1983}, Wilbrink investigated this conjecture and obtained a weaker version characterization. See \cite{hui_embedding_2014} for a recent progress on this conjecture.

In this subsection, we consider the existence of O'Nan configuration in the unitals $U_\theta$ in $\Pi(f)$ for various planar functions $f$.

First, we look at O'Nan configurations containing the point $(\infty)$.
\begin{lemma}\label{le:ONan_infty}
	Let $f$ be a planar function from $\F_{q^2}$ to itself and $\theta\in \F_{q^2}^*$ such that $\U_\theta$ is a unital in $\Pi(f)$. There exists an O'Nan configuration with $(\infty)$ as a vertex if and only if there exist $a\in \F_{q^2}^*$ and $\beta \in \F_q^*$ such that $\# (C_{a,\beta} \cap C_{0,1}) \ge 3$, where $C_{a,\beta}$ is the circle defined by \eqref{eq:definition_circles}.
\end{lemma}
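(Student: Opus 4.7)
The plan is to translate the O'Nan condition into a condition on circles using Lemma \ref{le:circles}(c), which identifies each $C_{a,\beta}$ with the common first-coordinate projection of the $q$ blocks $B_{a,b}$ having $\beta(b)=\beta$. First I would observe that in an O'Nan configuration containing $(\infty)$, exactly two of the four blocks must pass through $(\infty)$: if three did, they would pairwise meet only at $(\infty)$ and collapse three of the six distinct intersection points. Denote these two blocks by $B_{c_1},B_{c_2}$ with $c_1\neq c_2$ and the remaining two by $B_{a_1,b_1},B_{a_2,b_2}$.

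For $(\Rightarrow)$, each $B_{a_i,b_i}$ must meet both $B_{c_1}$ and $B_{c_2}$, while $B_{a_1,b_1}\cap B_{a_2,b_2}$ contributes a sixth intersection point whose first coordinate $x_*$ is distinct from $c_1,c_2$. By \eqref{eq:general.first}, a block $B_{a,b}$ meets $B_c$ at an affine point iff $c\in C_{a,\beta(b)}$, so $\{c_1,c_2,x_*\}\subseteq C_{a_1,\beta(b_1)}\cap C_{a_2,\beta(b_2)}$, giving two circles with at least three common points. Applying the shift $\tau_{-a_1,0}$, which acts on circles by $(a,\beta)\mapsto(a-a_1,\beta)$, moves this pair to $C_{0,\beta(b_1)}\cap C_{a_2-a_1,\beta(b_2)}$; then rescaling $\theta\mapsto (\beta(b_1))^{-1}\theta$ (which leaves $\U_\theta$ unchanged since $c\theta\cdot\F_q=\theta\cdot\F_q$ for $c\in\F_q^*$, while relabelling circles as $(a,\beta)\mapsto (a,\beta/\beta(b_1))$) normalises the pair to $C_{0,1}$ and some $C_{a,\beta}$ with $a\in\F_{q^2}^*$ and $\beta\in\F_q^*$.

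For $(\Leftarrow)$, given three distinct $x_1,x_2,x_3\in C_{0,1}\cap C_{a,\beta}$, I would assemble the four blocks $B_{x_1},B_{x_2}$ (through $(\infty)$) together with $B_{0,b_1}$ and $B_{a,b_2}$, where $b_1,b_2\in\F_{q^2}$ are chosen so that $\beta(b_1)=1$, $\beta(b_2)=\beta$, and $b_2-b_1=f(x_3+a)-f(x_3)$. These three constraints are simultaneously satisfiable because $x_3\in C_{0,1}\cap C_{a,\beta}$ forces $\beta(f(x_3+a)-f(x_3))=\beta-1$, which makes the system consistent. Enumerating the six pairwise intersection points, distinctness reduces to $f(x_i+a)-f(x_i)\neq f(x_3+a)-f(x_3)$ for $i=1,2$, which follows from the planarity of $f$: the map $x\mapsto f(x+a)-f(x)$ is a bijection, so three distinct arguments yield three distinct derivative values.

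The main technical obstacle is the bookkeeping in $(\Leftarrow)$: I must confirm that the third circle-intersection point $x_3$ can be realised as the first coordinate of $B_{0,b_1}\cap B_{a,b_2}$ for a \emph{single} choice of $b_1,b_2$ that simultaneously places the other intersection points on $B_{x_1}$ and $B_{x_2}$ correctly. The key observation is that this compatibility is precisely the content of $x_3\in C_{0,1}\cap C_{a,\beta}$, so the circle intersection condition is an \emph{exact} translation of the O'Nan feasibility. A secondary subtlety in $(\Rightarrow)$ is the normalisation to $\beta(b_1)=1$; it relies on the identity $\U_\theta=\U_{c\theta}$ for $c\in\F_q^*$ together with the induced rescaling of circle labels. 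Apart from this normalisation, the whole argument is a direct dictionary between block incidences in $\U_\theta$ and intersections of circles in $\F_{q^2}$.
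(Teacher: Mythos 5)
Your proof is correct and follows essentially the same route as the paper's: the same block--circle dictionary ($B_{a,b}$ meets $B_c$ in an affine point iff $c\in C_{a,\beta(b)}$), the same converse construction (two blocks through $(\infty)$ over two of the common circle points, plus two secant blocks whose constants are adjusted so that they meet above the third, with distinctness coming from planarity of $x\mapsto f(x+a)-f(x)$), and the same translation-plus-rescaling normalisation to $C_{0,1}$, which the paper compresses into ``under the automorphism group of the unital we may assume $\Tr_{q^2/q}(d)=1$''. Your explicit observation that replacing $\theta$ by $c\theta$, $c\in\F_q^*$, leaves $\U_\theta$ unchanged and merely relabels the circles is, if anything, a slightly more careful justification of that normalisation step than the paper provides.
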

\begin{proof}
	$(\Leftarrow)$ By the definition of circles, there exist blocks $B_{a,b}$ and $B_{0,d}$ corresponding to $C_{a,\beta}$ and $C_{0,1}$ respectively for certain $b,d\in \F_{q^2}$, such that 
	\begin{eqnarray*}
		\{x : (x,t\theta)\in B_{a,b} \} &= &C_{a,\beta},\\
		\{x : (x,t\theta)\in B_{0,d} \} &= &C_{0,1}.
	\end{eqnarray*}
	Assume that $u,v,w\in C_{a,\beta} \cap C_{0,1}$, which means that
	\begin{equation}\label{eq:f(u+a)-b_in_F_q}
		f(x+a)-b = f(x) - d \in \theta \F_q,	
	\end{equation}
	for $x=u,v,w$.
	
	Let $d':= f(u) + b - f(u+a)$. It follows that $f(u) - d'= f(u+a) - b\in \theta \F_q$. Together with \eqref{eq:f(u+a)-b_in_F_q}, we have 
	\[d'-d=(d'-f(u))+(f(u)-d) \in \theta \F_q. \]
	Hence $B_{a,b}$ and $B_{0,d'}$ meets at the point $(u,f(u)+d')$ and both $B_{0,d'}$ and $B_{0,d}$ are projected to the same circle $C_{0,1}$. From the assumption, we see that $B_v$ and $B_w$ intersect both $B_{a,b}$ and $B_{0,d'}$. Therefore, these four blocks intersect totally in six points and form an O'Nan configuration.
	
	$(\Rightarrow)$ Assume that there is an O'Nan configuration containing $(\infty)$. It implies that there are two blocks $B$ and $B'$ intersecting in a point $(u,s\theta)$ and not containing $(\infty)$, and they both intersect $B_c$ and $B_{c'}$ for certain $c,c'\in \F_{q^2}$. Under the automorphism group of the unital, we can always assume that $B=B_{0,d}$ and $\Tr_{q^2/q}(d)=1$, which means that the corresponding circle is $C_{0,1}$. Denoting the circle derived from $B'$ by $C_{a,b}$, we see that $c,c', u\in C_{a,\beta} \cap C_{0,1}$.
\end{proof}

According to \cite[Page 80]{baker_buekenhout-metz_1992}, there is no O'Nan configuration with $(\infty)$ in the Buekenhout-Metz unitals of odd order. It implies that there is no O'Nan configuration containing $(\infty)$ in $\U_\theta$ defined over $\Pi(f)$, where $f(x)=x^2$. It is also not difficult to see this result from Lemma \ref{le:ONan_infty} and \eqref{eq:definition_circles}. For two arbitrary circles $C_{a,\beta}$ and $C_{a',\beta'}$, we look at the common solution of $\Tr_{q^2/q}(\delta (x+a)^2) = \beta$ and $\Tr_{q^2/q}(\delta (x+a')^2) = \beta'$. Subtracting the second equation from the first one, we get $\Tr_{q^2/q}(2\delta (a-a')x)=\Tr_{q^2/2}(\delta (a'^2-a^2)) + \beta- \beta'$. Hence if $a\neq a'$, then $x^q$ equals a polynomial of degree one in $\F_{q^2}[x]$. Plugging it back into $\Tr_{q^2/q}(\delta (x+a)^2) = \beta$, we get a quadratic equation, which means there are at most two solutions. The same approach also works for the unital defined in Theorem \ref{th:many.planar.functions.1} when $f$ is derived from a Dickson's semifield.

For $f(x)=x^{p^k+1}$ and $f(x)= x^{\frac{3^k+1}{2}}$, we conjecture that there do exist O'Nan configuration containing the point $(\infty)$. This is confirmed for several small $q$ by using MAGMA \cite{Magma}. However we cannot find a proof. 

Next, let us look for O'Nan configurations without $(\infty)$.
\begin{lemma}\label{le:ONan_without_infty}
	Let $f$ be a planar function from $\F_{q^2}$ to itself and $\theta\in \F_{q^2}^*$ such that $\U_\theta$ is a unital in $\Pi(f)$. Let $x*y := f(x+y)-f(x)-f(y)$. Let $t_u$, $t_v$ and $t_w\in \F_q$ and $a_u$, $a_v$, $a_w\in \F_{q^2}$ which are pairwise distinct. Assume that there exist three distinct elements $x_u$, $x_v$ and $x_w\in \F_{q^2}$ such that for each $k\in \{u,v,w\}$,
	\begin{enumerate}[label=(\alph*)]
		\item $x_{k}*a_i-x_{k}*a_j = (t_j-t_i)\theta$ for pairwise distinct $i$, $j\in \{u,v,w\}$, where $i,j\neq k$, and
		\item $f(x_{k}+a_i)-f(a_i)\in \theta\F_q$ for $i\in \{u,v,w\}\setminus{k}$.
	\end{enumerate}
	Then there is an O'Nan configuration without $(\infty)$ in $\U_\theta$.
\end{lemma}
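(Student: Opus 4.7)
The plan is to exhibit four blocks of $\U_\theta$ whose six pairwise intersections form an O'Nan configuration of affine points. For $i\in\{u,v,w\}$, define
\[ L_i \;:=\; B_{a_i,\,f(a_i)-t_i\theta}, \]
which is a block of $\U_\theta$ provided $f(a_i)\notin\theta\F_q$ (a mild non-degeneracy to be handled on the side). Using $x*y=f(x+y)-f(x)-f(y)$, hypothesis (b) rewrites as $f(x_k+a_i)-f(a_i)=s_k^i\theta$ for some $s_k^i\in\F_q$ whenever $i\in\{u,v,w\}\setminus\{k\}$; expanding hypothesis (a) then gives $(s_k^i-s_k^j)\theta=(t_j-t_i)\theta$, which forces $s_k^i+t_i$ to be independent of $i\neq k$. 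Calling this common value $\sigma_k$, a direct substitution shows $(x_k,\sigma_k\theta)\in L_i\cap L_j$ for $\{i,j,k\}=\{u,v,w\}$. By the pairwise distinctness of $x_u,x_v,x_w$, the three triangle vertices $P_k:=(x_k,\sigma_k\theta)$ are distinct and $L_u,L_v,L_w$ are three pairwise-intersecting distinct blocks.

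Next I produce a fourth block $L_0=B_{a_0,b_0}$ (with $b_0\notin\theta\F_q$) that meets each $L_i$ at a point $Q_i\notin\{P_u,P_v,P_w\}$. The intersection $L_0\cap L_i$ is characterized by the equation $y_i*a_0-y_i*a_i=(b_0-b_i)-(f(a_0)-f(a_i))$ together with the point-validity requirement $f(y_i+a_0)-b_0\in\theta\F_q$. Trying the simplest choice $a_0=0$ (so $y*a_0=-f(0)=0$ for normal $f$), the system reduces to $y_i*a_i=-b_0-t_i\theta$ together with $f(y_i)-b_0\in\theta\F_q$ for $i\in\{u,v,w\}$; the first equation has a unique solution by planarity (the map $y\mapsto y*a_i$ is a bijection for $a_i\neq 0$), and the second is one $\F_q$-linear constraint on $b_0$ per index $i$, totaling three conditions on the two-dimensional $\F_q$-space $\F_{q^2}\ni b_0$. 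Once a valid $b_0$ is found, pairwise distinctness of the six intersection points is automatic: two points on distinct blocks of a unital can coincide only at their unique shared point, and by construction no three of the four blocks are concurrent (the $P_k$ are pairwise distinct, and each $Q_i$ is attributed only to $L_i\cap L_0$).

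The main obstacle is the simultaneous solvability of the three $\F_q$-point-validity conditions on $b_0$, since three constraints on a two-dimensional space over $\F_q$ are generically infeasible. The key claim is that hypotheses (a) and (b) enforce just enough compatibility among them for a valid $b_0$ (with $b_0\notin\theta\F_q$) to exist. For Dembowski-Ostrom $f$, where $*$ is bilinear, the $y_i$ and the validity conditions are explicit rational/linear functions of $b_0$, and the compatibility can be traced algebraically to the relations among $a_i,t_i,x_k$ already encoded by (a)-(b); in broader generality one may need to enlarge the parameter space by allowing $a_0\neq 0$, or invoke a counting/transitivity argument leveraging the shift group of $\U_\theta$ to guarantee existence of $L_0$.
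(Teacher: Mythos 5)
The first half of your argument is exactly the paper's: you take the three blocks $L_i=B_{a_i,\,f(a_i)-t_i\theta}$ and use hypotheses (a) and (b) to check that each pairwise intersection $L_i\cap L_j$ is a point of $\U_\theta$ with first coordinate $x_k$, distinct for the three pairs. The divergence, and the genuine gap, is your fourth block. You insist on a fourth block $L_0=B_{a_0,b_0}$ \emph{not} through $(\infty)$, and your own analysis shows why this fails: requiring $L_0$ to meet each $L_i$ in a unital point imposes three $\F_q$-conditions on $b_0$, which lives in a two-dimensional $\F_q$-space, and you never establish the claimed ``compatibility'' coming from (a)--(b); the closing appeal to Dembowski--Ostrom bilinearity, enlarging $a_0$, or a ``counting/transitivity argument'' is speculation, not a proof. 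So as written the existence of the configuration is not proved.

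The missing idea is that the fourth block is already there and does pass through $(\infty)$: the statement only asks for an O'Nan configuration \emph{without $(\infty)$ as a vertex}, not one avoiding blocks through $(\infty)$ (the remark immediately after the lemma in the paper makes this explicit). Each line $L_i$ contains the point $(0,t_i\theta)\in\U_\theta$, so the block $B_0=\{(0,t\theta):t\in\F_q\}\cup\{(\infty)\}$ meets $L_u,L_v,L_w$ in the three affine points $(0,t_u\theta),(0,t_v\theta),(0,t_w\theta)$. Together with the three intersection points $(x_k,\cdot)$, which are pairwise distinct since the $x_k$ are, these six affine points on the four blocks $B_0,L_u,L_v,L_w$ form the desired O'Nan configuration, with no need to solve for any $b_0$. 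Your over-strong reading of ``without $(\infty)$'' is what forced you into the unsolvable three-constraint problem.
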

\begin{proof}
	Under the assumption, we show that there exists an O'Nan configuration containing the points $U=(0,t_u\theta)$, $V=(0,t_v\theta)$, $W=(0,t_w\theta)$. Let $L_i:=\{(x,f(x+a_i)-f(a_i)+t_i\theta ): x\in \F_{q^2} \}$ for $i\in \{u,v,w\}$. Clearly $L_u$, $L_v$ and $L_w$ are the lines in $\Pi(f)$ containing $U$, $V$ and $W$ respectively. The first coordinate $x_w$ of the intersection point of $L_u$ and $L_v$ is the solution of
	\[f(x+a_u)-f(a_u)+t_u\theta=f(x+a_v)-f(a_v)+t_v\theta,\]
	from which it follows that $x_{w}*a_u-x_{w}*a_v=(t_v-t_u)\theta$. According to the definition of $\U_\theta$, this point is in the unital $\U_\theta$ if and only if $f(x_w+a_u)-f(x_w)\in \theta\F_q$.  Similar results can be obtained for $L_u\cap L_w$ and $L_v\cap L_w$. Noting that $x_u$, $x_v$ and $x_w$ are pairwise distinct, $L_u$, $L_v$ and $L_w$ do not intersect in the same point. Therefore these three intersecting points together with $U$, $V$ and $W$ form a O'Nan configuration.
\end{proof}
\begin{remark}
	In Lemma \ref{le:ONan_without_infty}, the O'Nan configuration obtained contains a line through $(\infty)$. Without loss of generality, we can assume that $t_w=0$. From its proof, it is not difficult to see that for given $k$, we only have to take one $i\in \{u,v,w\}\setminus{k}$ to check whether (b) holds.
\end{remark}

Using Lemma \ref{le:ONan_without_infty}, we can prove the existence of O'Nan configurations in $\U_\theta$ for several different planar functions $f$.
\begin{theorem}\label{th:ONan_in_x^d}
	Let $n$ be a positive integer, $q=p^{2n}$ and $k$ an integer satisfying $1\le k \le n$ and $2\nmid \frac{2n}{\gcd(2n,k)}$. For the planar functions $f(x)=x^2$ and $f(x)=x^{p^k+1}$ on $\F_{q^2}$, the unital $\U_\theta$ defined in Theorem \ref{th:many.planar.functions.1} contains an O'Nan configuration.
\end{theorem}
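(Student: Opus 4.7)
The plan is to apply Lemma \ref{le:ONan_without_infty} by exhibiting an explicit symmetric configuration. By Proposition \ref{prop:unital_powerplanar_orbit}, all valid $\U_\theta$'s in $\Pi(f)$ form a single equivalence class, so the O'Nan property needs only be produced for one convenient representative. Treating $f(x) = x^{p^k+1}$ uniformly (the convention $k = 0$ recovers $f(x) = x^2$ and gives $x*y = 2xy$), I would first replace $\theta$, via the equivalence, with an element satisfying $\theta^{p^k-1} \in \F_q$ (vacuous when $k = 0$; for $k \ge 1$, this is possible by a $2$-adic valuation argument showing that the subgroup $H := \{\theta \in \F_{q^2}^* : \theta^{(q-1)(p^k-1)}=1\}$ is not contained in the squares of $\F_{q^2}^*$, combined with the fact that the $\Gamma$-orbit of any valid $\theta$ is the entire non-square coset). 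Following the Remark after the lemma, I set $t_w = 0$ and take
\[
a_u = 1,\quad a_v = \theta,\quad a_w = 1+\theta,\quad x_u = -2\theta,\quad x_v = -2,\quad x_w = 0,
\]
with $t_u = t_v = -2(\theta^{p^k-1}+1) \in \F_q$ forced by condition (a).

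The verification of (a) uses the identities $(-2)^{p^k} = -2$ (Fermat in $\F_p$) and $(1+\theta)^{p^k} = 1 + \theta^{p^k}$ (Frobenius). For $k = u$ one computes $x_u * a_v - x_u * a_w = 2(\theta^{p^k}+\theta) = -t_v\theta$; $k = v$ is symmetric and forces $t_u = t_v$; $k = w$ reduces to $0 = 0$ since $x_w = 0$ and $t_v - t_u = 0$. The verification of (b) is cleaner: since $p^k+1$ is even for odd $p$, $(x_u+a_v)^{p^k+1} = (-\theta)^{p^k+1} = \theta^{p^k+1} = a_v^{p^k+1}$, while $(x_v+a_u)^{p^k+1} = 1 = a_u^{p^k+1}$ and $(x_w+a_u)^{p^k+1} = 1 = a_u^{p^k+1}$, so all three differences vanish and trivially lie in $\theta\F_q$. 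Pairwise distinctness of the $a_i$'s and of the $x_k$'s is immediate from $\theta \notin \F_q$ together with $\mathrm{char}\,\F_q \neq 2$, so Lemma \ref{le:ONan_without_infty} yields the desired O'Nan configuration.

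The main obstacle I anticipate is the preliminary reduction step when $k \ge 1$; the algebraic verifications of (a) and (b) are then routine characteristic-$p$ identities. The reduction itself follows from the planarity condition (equivalent to $v_2(k) \ge 1 + v_2(n)$) together with the lifting-the-exponent identity $v_2(p^k-1) = v_2(p-1) + v_2(p+1) + v_2(k) - 1$ for $k$ even, which together imply $v_2(q+1) \le v_2(p^k-1)$ and hence $H \not\subset \mathrm{Squares}(\F_{q^2}^*)$, so that $H$ meets the non-square norm coset and an equivalent $\theta' \in H$ does exist; for $f(x) = x^2$ the entire reduction is vacuous and the configuration above immediately yields the claim.
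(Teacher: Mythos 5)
Your preliminary reduction (passing to an equivalent $\U_{\theta'}$ with $\theta'^{\,p^k-1}\in\F_q$ via Proposition \ref{prop:unital_powerplanar_orbit} and the $2$-adic estimate) is sound, and your verifications of conditions (a) and (b) of Lemma \ref{le:ONan_without_infty} are computationally correct. The fatal problem is that the data you feed into the lemma are degenerate, so no O'Nan configuration results. As you yourself note, condition (a) forces $t_u=t_v=-2(\theta^{p^k-1}+1)$, and you take $x_w=0$. In the lemma's construction the six points are $U=(0,t_u\theta)$, $V=(0,t_v\theta)$, $W=(0,0)$ and the pairwise intersections of the secants $L_u$, $L_v$, $L_w$; here $U=V$, and since $x_w=0$ the intersection $L_u\cap L_v$ is that very same point $(0,t_u\theta)$ on $N_0$. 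Thus the three lines $N_0$, $L_u$, $L_v$ are concurrent, and your four blocks meet in only four distinct points, namely $(0,t_u\theta)$, $(0,0)$, $(-2,t_u\theta)$ and $(-2\theta,t_u\theta)$ --- not the six points of an O'Nan configuration. The hypothesis ``pairwise distinct'' in Lemma \ref{le:ONan_without_infty} must be read as applying to $t_u,t_v,t_w$ as well (equivalently, no $x_k$ may send an intersection point back onto $N_0$); the lemma's proof tacitly uses that $U$, $V$, $W$ are distinct and distinct from the points $L_i\cap L_j$, so satisfying (a) and (b) alone does not establish the theorem.

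This degeneracy is exactly what the paper's proof is engineered to avoid: it chooses $a_v,a_w$ in the subfield $\F_{p^{\gcd(2n,k)}}$ with $a_v\neq a_w$, $a_v\neq\omega a_w$, where $\omega^2-\omega+1=0$ (available since $2\mid k$), and sets $a_u=a_w(1-\omega)+\omega a_v$; this makes the forced values $t_u$, $t_v$, $t_w=0$ pairwise distinct and gives $x_u=-2a_v$, $x_v=-2a_w$, $x_w=-2a_u$ all nonzero, so the six points are genuinely distinct. Your triple $(a_u,a_v,a_w)=(1,\theta,1+\theta)$ is too symmetric: by the symmetry you observe in checking $k=u$ and $k=v$, it cannot yield distinct $t_u,t_v$. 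To salvage your approach you would need to perturb the choice of the $a_i$ (essentially reproducing the paper's $\omega$-construction) so that the $t$'s forced by (a) are pairwise distinct and no $x_k$ vanishes; as written, the proposal does not prove the existence of an O'Nan configuration.
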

\begin{proof}
	To unify the proof for these two classes of planar functions, we set $k=2n$ when $f(x)=x^2$.
	
	Let $\omega$ be an element in $\F_{q^2}$ such that $\omega^2-\omega+1=0$. Since $2\mid k$, $\omega$ is also in $\F_{p^k}$.  Let $a_v$ and $a_w$ be two elements in $\F_{p^{\gcd(2n,k)}}^*$ such that $a_v\neq a_w$ and $a_v\neq \omega a_w$. Define
	\begin{eqnarray*}
	 a_u &:=& a_w(1-\omega)+\omega a_v,\\
	 t_u &:=& 4a_w(a_v-a_w)\omega/\theta,\\
	 t_v &:=& 4a_v(a_v-a_w)/\theta,\\
	 t_w &:=& 0.
	\end{eqnarray*}
	As $\omega$ is also in $\F_{p^{\gcd(2n,k)}}$, we have that $a_u\in \F_{p^{\gcd(2n,k)}}$. From $a_v\neq a_w$ and $\omega a_w$, it is readily to deduce that $a_u$ is also distinct from $0$, $a_v$ and $a_w$.  As $2\mid k$, we have $\F_{p^{\gcd(2n,k)}}\ge p^2\ge 9$, which guarantees the distinct values of $0$, $a_v$, $a_w$ and $a_u$.
	
	By Lemma \ref{le:ONan_without_infty}, now $x*y=x^{p^k}y+xy^{p^k}$ and 
	\[x_{w}*a_u-x_{w}*a_v = x_w*(a_u-a_v)= (x_w^{p^k}+x_w)(a_u-a_v)=(t_v-t_u)\theta.\] 
	Hence
	\[x_w^{p^k}+x_w = \theta\frac{t_v-t_u}{a_u-a_v}=\frac{4(a_w-a_v)(a_w \omega - a_v)}{(a_w-a_v)(1-\omega)}=-4a_u\neq 0,\]
	where the last equality comes from the fact that $\omega^2-\omega+1=0$. As $a_u\in \F_{p^k}$ and the mapping $x_w\mapsto x_w^{p^k}+x_w$ is a bijection on $\F_{q^2}$, we have $x_w=-2a_u$. Thus $f(x_w+a_u)-f(a_u)=f(-a_u)-f(a_u)=0\in \theta\F_q$. 
	
	Similarly, we have $x_u=-2a_v$ and $x_v=-2a_w$. Thus $f(x_u+a_v)-f(a_v)=f(x_v+a_w)-f(a_w)=0\in \theta\F_q$. 
	
	Therefore, the two conditions in Lemma \ref{le:ONan_without_infty} are satisfied and there is an O'Nan configuration in $\U_\theta$.
\end{proof}

\begin{theorem}\label{th:ONan_in_two_dim}
	Let $q=p^n$ be an odd prime power satisfying that $q \equiv 1 \pmod 4$ and $\xi\in \F_{q^{2}}\setminus \F_{q}$.  Let $\U_\xi$ be a unital defined in Theorem \ref{th:many.planar.functions} for Dickson's semifields or for the semifields constructed in \cite{zhou_new_2013}.  
	If there is $\omega\in \F_{q}$ such that $\omega^2-\omega+1=0$, i.e.\ $2\mid n$, then there is an O'Nan configuration in $\U_\theta$.
\end{theorem}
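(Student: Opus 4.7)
The plan is to mimic the proof of Theorem \ref{th:ONan_in_x^d}: invoke Lemma \ref{le:ONan_without_infty} with $x_k := -2 a_{i(k)}$ along the 3-cycle $i(u) = v$, $i(v) = w$, $i(w) = u$. Since the planar functions in both families are Dembowski--Ostrom, I can write $f(x) = x \circ x$ for the associated commutative semifield product $\circ$, giving $x*y = 2(x \circ y)$; and because $f$ is a normal planar function, $f(-a) = f(a)$. Hence
\[
f(x_k + a_{i(k)}) - f(a_{i(k)}) = f(-a_{i(k)}) - f(a_{i(k)}) = 0 \in \xi\F_q,
\]
so condition (b) of the lemma is automatic, and pairwise distinctness of $x_u,x_v,x_w$ is equivalent to that of $a_u,a_v,a_w$ because $i$ is a derangement.

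The content is in condition (a). Splitting $x_k*(a_i-a_j) = (t_j-t_i)\xi$ into $\F_q$- and $\xi$-components, the $\F_q$-parts yield the three cyclic tangency identities
\[
B(a_u,a_v) = Q(a_u),\quad B(a_v,a_w) = Q(a_v),\quad B(a_w,a_u) = Q(a_w),
\]
where $Q(x) := \pi_0(f(x))$ and $B(x,y) := \pi_0(x\circ y)$ with $\pi_0\colon\F_{q^2}\to\F_q$ the projection onto the $\F_q$-part; the $\xi$-parts determine $t_u,t_v,t_w$ up to an overall translation, subject to a single consistency condition on their $\xi$-coefficients. For Dickson's semifield, the substitution $\tilde a := (a_0, a_1^{p^i})$ diagonalizes $B$ into the standard anisotropic form $\langle x,y\rangle := x_0 y_0 + \alpha x_1 y_1$ on $\F_q^2$ (anisotropic because $q \equiv 1 \pmod 4$ and $\alpha$ nonsquare make $-\alpha$ nonsquare). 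Writing $\rho(x) := (\alpha x_1, -x_0)$ so that $\rho(x) \perp x$ and $\rho^2 = -\alpha\,\mathrm{id}$, I parametrize $\tilde a_v = \tilde a_u + \lambda_v \rho(\tilde a_u)$ and $\tilde a_w = \tilde a_v + \lambda_w \rho(\tilde a_v)$; the first two tangencies are automatic and the third reduces to $(1+\alpha\lambda_v^2)\lambda_w^2 + \lambda_v\lambda_w + \lambda_v^2 = 0$, with discriminant $\lambda_v^2(-3-4\alpha\lambda_v^2)$.

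Solvability in $\lambda_w\in\F_q^*$ therefore reduces to finding $\lambda_v\in\F_q^*$ with $-3-4\alpha\lambda_v^2$ a square --- equivalently, an $\F_q$-point $(z,\lambda_v)$ with $\lambda_v\neq 0$ on the anisotropic conic $z^2 + 4\alpha\lambda_v^2 = -3$. By Lemma \ref{le:quadratic} (with $-\Delta = -4\alpha$ nonsquare and $-3\neq 0$ when $p\neq 3$) this conic has $q+1$ rational points and at most two have $\lambda_v = 0$, so the required $\lambda_v$ exists; the remaining two-parameter freedom in $u\in\F_q^2$ can then be used to absorb the residual $\xi$-consistency condition. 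The hypothesis $\omega\in\F_q$ (i.e.\ $2\mid n$) corresponds to $\sqrt{-3}\in\F_q$, appearing implicitly in the closed form for $\lambda_w$ and echoing the role of $\omega$ in Theorem \ref{th:ONan_in_x^d}. The Zhou--Pott case follows by the same pattern after a further Frobenius twist of the coordinates to restore the $\F_q$-bilinearity of $B$.

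The main obstacle I foresee is the small-characteristic case $p = 3$: there $-3 = 0$ and $-4\alpha\lambda_v^2$ is a nonsquare for every nonzero $\lambda_v$, degenerating the cyclic-tangency ansatz. For $p = 3$ I would relax $x_k = -2a_{i(k)}$ and allow $y_k := x_k + a_{i(k)}$ to range over the entire $(q+1)$-point level-set conic $\{Q = Q(a_{i(k)})\}$ rather than just over $\{\pm a_{i(k)}\}$, sacrificing the automatic vanishing of (b) in exchange for an extra parameter per point --- enough to satisfy (a), (b), and the $\xi$-consistency simultaneously, or else to be checked directly for the small residual fields $\F_9, \F_{81}, \ldots$. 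Pairwise distinctness of $a_u, a_v, a_w$ and of $x_u, x_v, x_w$ is routine, following from $\lambda_v, \lambda_w \neq 0$ and the anisotropy of $\langle\cdot,\cdot\rangle$.
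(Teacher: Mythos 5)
Your overall strategy (Lemma \ref{le:ONan_without_infty} with the ansatz $x_k=-2a_{i(k)}$, as in Theorem \ref{th:ONan_in_x^d}) is the right family of ideas, but the proposal has a central unproved step exactly where the theorem's hypothesis must be used. With $x_k=-2a_{i(k)}$ fixed, condition (a) for the three values of $k$ does two things: its $\F_q$-components give your three tangency identities, and its $\xi$-components \emph{prescribe} $t_v-t_u$, $t_w-t_v$, $t_u-t_w$ as the $\xi$-coefficients of $-4\bigl(f(a_u)-a_u\circ a_v\bigr)$, $-4\bigl(f(a_v)-a_v\circ a_w\bigr)$, $-4\bigl(f(a_w)-a_w\circ a_u\bigr)$. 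Elements $t_u,t_v,t_w\in\F_q$ exist only if these three prescriptions sum to zero, i.e.\ only if the $\xi$-coefficient of $f(a_u-a_v)+f(a_v-a_w)+f(a_w-a_u)$ vanishes; for Dickson this is $\sum \delta_0\delta_1=0$ over the three side-differences, which in your tilde coordinates is a twisted form of degree $1+p^{n-i}$ in $\tilde a_u$ once $\lambda_v,\lambda_w$ are fixed, and such forms can perfectly well be anisotropic. You acknowledge this ``single consistency condition'' but then only assert that the ``two-parameter freedom in $u$'' absorbs it; no argument is given, and your conic argument for $\lambda_v$ does not interact with it. Tellingly, your construction never actually uses the hypothesis that $\omega$ with $\omega^2-\omega+1=0$ lies in $\F_q$: in the paper this identity, fed through the choices $a_u=a_w(1-\omega)+\omega a_v$ with $a_v,a_w$ in the subfield $\F_{p^{\gcd(k,n)}}$ and the explicit $t_i$, is precisely what makes the three instances of condition (a) simultaneously satisfiable. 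A proof of the theorem that never invokes its hypothesis cannot be complete as it stands.

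Two further gaps. First, characteristic $3$ is genuinely inside the theorem's scope (Dickson and Zhou--Pott planes of order $3^{2n}$ for infinitely many even $n$; in characteristic $3$ the root $\omega=-1$ always exists), and by your own computation the ansatz provably fails there: the discriminant becomes $-4\alpha\lambda_v^4$, a nonsquare for every $\lambda_v\neq 0$ since $q\equiv 1\pmod 4$ and $\alpha$ is a nonsquare. ``Relax $x_k$'' is only a suggestion, and ``check the small residual fields directly'' is not available, because the family of characteristic-$3$ cases is infinite. Second, the Zhou--Pott reduction is not a formality: there the first component of $B(x,y)$ is $\tfrac12(x_0^{p^k}y_0+x_0y_0^{p^k})$, which is biadditive but not $\F_q$-bilinear, and no Frobenius substitution applied separately to the two arguments can merge the bidegrees $(p^k,1)$ and $(1,p^k)$ into $(1,1)$; so your $\rho$-parametrization, which relies on genuine bilinearity and orthogonality over $\F_q$, does not transfer ``by the same pattern.'' The paper's device for both families is to place the $a_i$ inside $\F_{p^{\gcd(k,n)}}$, where $x^{p^k}=x$ linearizes everything --- that restriction (together with the $\omega$-identity) is the step your proposal would need and currently lacks.
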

\begin{proof}
	To unify the proof for these two semifields, we write $f(x)=(x_0^{p^k+1}+\alpha x_1^{p^{k+i}+p^i})+2x_0x_1\xi$, where $0< i, k\le n$ and $n/\gcd(n,k)$ is odd. It implies that the multiplication $x*y$ defined in Lemma \ref{le:ONan_without_infty} is
	\[x*y=(x_0^{p^k}y_0+y_0^{p^k}x_0 + \alpha (x_1^{p^k}y_1 + y_1^{p^k}x_1 )^{p^i}) + (x_0y_1+x_1y_0)\xi\] 
	for $x,y\in \F_{q^2}$.
	 
%	Our proof is separated into two cases: 
	
%	\noindent \textbf{Case 1}: There exists $\omega\in \F_{p^k}$ such that $\omega^2-\omega+1=0$. 
	Similarly as in Theorem \ref{th:ONan_in_x^d}, we take $a_v$ and $a_w\in \F_{p^{\gcd(k,n)}}^*$ such that $a_v\neq a_w$ and $a_v\neq \omega a_w$. It is not difficult to see that $p^{\gcd(k,n)}\ge 5$. Thus we can always find $a_v$ and $a_w$ satisfying all the assumptions. Define
	\begin{eqnarray*}
		a_u &:=& a_w(1-\omega)+\omega a_v,\\
		t_u &:=& 4a_w(a_v-a_w)\omega/\xi,\\
		t_v &:=& 4a_v(a_v-a_w)/\xi,\\
		t_w &:=& 0.
	\end{eqnarray*}
	It is straightforward to verify the conditions in Lemma \ref{le:ONan_without_infty}.
\end{proof}
%\zhou{If there exists $\omega\in \F_{q^2}\setminus \F_{p^k}$ such that $\omega^2-\omega+1=0$, then my previous approach does not work. A small modification still work if $i=0$, or a bit more general, if $\alpha\in \F_{q}\cap\F_{p^k}\cap \F_{p^i}$. In such cases we can restrict to $\F_{p^{2i}}$ and use Theorem \ref{th:ONan_in_x^d} to show that an O'Nan configuration exists in the intersection of the unital and the subplane of order $p^{2i}$. }

\subsection{Open problems and remarks}
To conclude this section, we propose an open problem on the automorphism group $\Aut(\U_\theta)$ of $\U_\theta$.

\begin{conjecture}\label{conjecture:automorphism.main}
	Let $f:\F_{q^2}\rightarrow \F_{q^2}$ be a normal planar function  and  $\U_\theta$ a unital in $\Pi(f)$ defined in Proposition \ref{prop:general}. Then
	\[\Aut(\U_\theta)=\Aut_{\Pi(f)}(\U_\theta).\]
\end{conjecture}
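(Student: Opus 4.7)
The plan is to reduce the conjecture to showing that every design automorphism $\alpha$ of $\U_\theta$ fixes the point $(\infty)$; once this is established, the structure of blocks through $(\infty)$ together with the planar-function coordinatization of $\Pi(f)$ will force $\alpha$ to be the restriction of a collineation of $\Pi(f)$, whence $\alpha \in \Aut_{\Pi(f)}(\U_\theta)$.

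The distinguishing of $(\infty)$ is the main obstacle. Proposition \ref{prop:wilbrinkII} supplies a natural candidate invariant---$(\infty)$ is a vertex of Wilbrink's condition II in strong form---and the task is to extend the uniqueness statement of Theorem \ref{th:wilbrink_x^2} from $f(x)=x^2$ to every normal planar function $f$. Following that template, I would use the shift subgroup $T' := \{\tau_{u,v} : u\in \F_{q^2},\ v\in\theta\F_q\}$, which lies in $\Aut_{\Pi(f)}(\U_\theta)$ and acts regularly on the affine points of $\U_\theta$, to reduce a hypothetical second strong-Wilbrink vertex to $(0,0)$. Picking $B = B_1$, $C = B_0$ and auxiliary point $w = (0,2\theta)$, one then derives a polynomial identity in $f$ playing the role of \eqref{eq:wilbrink_theta_final}, whose failure for at least one value of $\theta$ in each equivalence class under Proposition \ref{prop:unital_powerplanar_orbit} rules out a completing block $B'$. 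The hard part is that for general $f$ these identities are far more intricate than the quartic in $t$ arising for $x^2$; one must exploit the normal-planar-function property $f(a)=f(b)\iff a=\pm b$ and, for Dembowski--Ostrom polynomials, the biadditive form $x \ast y := f(x+y)-f(x)-f(y)$ used in Lemma \ref{le:ONan_without_infty}, to cut the resulting systems down to manageable degree. Should this uniform attack fail for a family, a fallback is to combine Theorems \ref{th:ONan_in_x^d} and \ref{th:ONan_in_two_dim} with Proposition \ref{prop:wilbrinkII} and show that the O'Nan configurations forced at affine points obstruct the parallelism arising from strong Wilbrink II there, leaving $(\infty)$ as the sole candidate.

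Assuming $(\infty)$ is pinned, $\alpha$ permutes the $q^2$ blocks through $(\infty)$---the $B_a$ for $a\in\F_{q^2}$---inducing a bijection $\pi$ of $\F_{q^2}$, and after composing with an element of $T'$ one may further assume $\pi(0)=0$. The design $(\F_{q^2},\mathscr{C}_\theta)$ of circles from Lemma \ref{le:circles}, which $\pi$ must preserve, is rigid enough---via the trace expression $\phi(x) = \Tr_{q^2/q}(\delta f(x))$ governing circle membership---to force $\pi$ to agree with a map $x \mapsto \sigma(cx)$ for some $c \in \F_{q^2}^*$ and $\sigma \in \Gal(\F_{q^2}/\F_p)$ that is compatible with $f$ in the sense of Proposition \ref{prop:unital_powerplanar_orbit}. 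Combining this action on the first coordinates with the forced action on the fiber $\theta\F_q$ in the second coordinate yields a candidate self-map of the affine plane; a direct check using the line equation $y = f(x+a)-b$ shows it maps lines of $\Pi(f)$ to lines, fixes $\U_\theta$, and restricts to $\alpha$. The reverse inclusion $\Aut_{\Pi(f)}(\U_\theta)\le \Aut(\U_\theta)$ is immediate, so the conjecture follows.
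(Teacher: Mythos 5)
The statement you are trying to prove is stated in the paper as a \emph{conjecture}, not a theorem: the authors explicitly leave it open and even explain why the most natural line of attack breaks down, so there is no proof in the paper to compare yours with. What you have written is a research plan, and both of its load-bearing steps are exactly the open parts. First, the uniqueness of $(\infty)$ as a vertex of Wilbrink's condition \RN{2} in strong form is established in the paper only for $f(x)=x^2$ (Theorem \ref{th:wilbrink_x^2}, for $q>3$, with $q=3$ by computer), and the proof there is a delicate explicit computation with the quartic in $t$ and the identity \eqref{eq:wilbrink_theta_final}; the authors remark that for other planar functions already solving the analogues of \eqref{eq:wilbrink_b_s} and \eqref{eq:wilbrink_a_s} is problematic. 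Your plan to ``cut the resulting systems down to manageable degree'' using normality and the form $x*y$ is precisely the missing idea, not a step you have supplied. Your fallback does not work as stated either: the existence of an O'Nan configuration somewhere in $\U_\theta$ (Theorems \ref{th:ONan_in_x^d} and \ref{th:ONan_in_two_dim}) does not obstruct an affine point from being a strong Wilbrink vertex --- the parallelism argument needs the \emph{non}-existence of O'Nan configurations through that vertex, so nothing is ``forced at affine points''.

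Second, even granting that every design automorphism fixes $(\infty)$, your rigidity claim for the circle design $(\F_{q^2},\mathscr{C}_\theta)$ is asserted, not argued, and the paper points out the concrete obstruction: for Coulter--Matthews planar functions there are O'Nan configurations through $(\infty)$, equivalently three points lying on two distinct circles (Lemma \ref{le:ONan_infty}), so the circle design cannot be completed to a miquelian inversive plane and the known route (O'Nan, Baker--Ebert, Hui et al.) to computing its automorphism group is unavailable. Determining $\Aut(\F_{q^2},\mathscr{C}_\theta)$ in this generality is the crux of the conjecture, not a consequence of the trace expression $\phi(x)=\Tr_{q^2/q}(\delta f(x))$. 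Moreover, your appeal to Proposition \ref{prop:unital_powerplanar_orbit} only makes sense for monomial planar functions --- for the Dickson, Zhou--Pott, Ganley, Penttila--Williams and Budaghyan--Helleseth cases no analogous group $\Gamma$ is defined --- and even after pinning down the induced permutation $\pi$ of first coordinates you would still have to control the bijections inside each fibre $\{(a,t\theta):t\in\F_q\}$, information the circle design discards entirely. So the proposal leaves the conjecture as open as the paper does.
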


In general, it is quite difficult to determine the automorphism group of a unital as a design. For a classical unital in $\mathrm{PG}(2,q^2)$, O'Nan \cite{onan_automorphisms_1972} proved that its automorphism group is $\mathrm{P\Gamma U}(3,q)$. In \cite{hui_non-classical_2013}, Hui, Law, Tai and Wong obtained a similar result for the unitals defined by polarities in the Dickson's semifield planes.

A natural approach, which was used in \cite{baker_buekenhout-metz_1992,hui_non-classical_2013,onan_automorphisms_1972}, is to consider the automorphism group of the design formed by the set of all circles derived/projected from the blocks of the corresponding unital. In \cite{baker_buekenhout-metz_1992,hui_non-classical_2013}, one extra point was added to this design and a miquelian inversive plane was obtained. As the automorphism group of a miquelian inversive plane is $\mathrm{P\Gamma L}(2,q)$ (see \cite{dembowski_finite_1997}), it is possible to further determine the automorphism group of the original unital.

Unfortunately, this approach cannot be generally applied on the unitals $\U_\theta$ constructed in this paper. For example, when $f(x)$ is a Coulter-Matthews planar function, it is possible that there exists an O'Nan configuration containing $(\infty)$ (It can be verified by using MAGMA for $q=3^2$ and $f(x)=x^{14}$). From Lemma \ref{le:ONan_infty}, it implies that there exist three elements $\{x_1,x_2,x_3\}$ belonging to two cycles. That means we cannot extend the design formed by all circles into an inversive plane.

\section{Inequivalence between $\U_\theta$ and the unitals derived from polarities}\label{sec:inequivalence}

A \emph{correlation} $\rho$ of a projective plane $\Pi$ is a one-to-one map of the points onto the lines and the lines onto the points such that the point $P$ is on the line $l$ if and only if $\rho(l)$ is on $\rho(P)$. A \emph{polarity} is a correlation of order two.

For each polarity $\rho$ on a projective plane $\Pi$, a point $P$ is called \emph{absolute} if $P$ is on the line $\rho(P)$. When the plane $\Pi$ is of order $q^2$, if the polarity $\rho$ has $q^3+1$ absolute points, then $\rho$ is a \emph{unitary} polarity. For each unitary polarity of $\Pi$, the set of absolute points and non-absolute lines forms a unital; see \cite[Theorem 12.12]{hughes_projective_1973}.

\begin{lemma}\label{le:polarity}
	Let $f$ be a planar function on $\F_{q^2}$ and $\kappa$ (also denoted by $x\mapsto \bar{x}$ for convenience) an additive map on $\F_{q^2}$ such that
	\begin{enumerate}[label=(\alph*)]
		\item $\kappa$ is involutionary, i.e.\ $\kappa(\kappa(x))=x$,
		\item $f$ and $\kappa$ are commutative under the composition, i.e.\ $\overline{f(x)}=f(\bar{x})$,
		\item $\#\{y:y+\bar{y}=f(x+\bar{x}) \}=q$ for each $x\in \F_{q^2}$.
	\end{enumerate}
	Let $\rho$ be a map from the points of $\Pi(f)$ to its line sets defined by:
	\begin{eqnarray*}
		(x,y) & \mapsto & L_{\bar{x},\bar{y}},\\
		(a) & \mapsto & N_{\bar{a}},\\
		(\infty) & \mapsto & L_{\infty} .
	\end{eqnarray*}
	Then $\rho$ induces a unitary polarity on $\Pi(f)$ and the corresponding unital is
	\begin{equation}\label{eq:unital_polarity}
		\U := \{(x,y): y + \bar{y} = f(x+\bar{x})  \} \cup \{ (\infty) \}.
	\end{equation}
\end{lemma}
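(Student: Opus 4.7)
The plan is to first extend $\rho$ to a bijection on the full point--line structure of $\Pi(f)$, then check the two properties that turn it into a polarity (incidence reversal and order two), and finally count the absolute points to conclude that the polarity is unitary with the prescribed absolute set.

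First I would extend $\rho$ to lines by the natural rule forced by an involution: $L_{a,b}\mapsto (\bar a,\bar b)$, $N_a\mapsto (\bar a)$, $L_\infty\mapsto (\infty)$. Since $\kappa$ is involutionary by hypothesis (a), the composition $\rho\circ\rho$ is the identity on both points and lines. Bijectivity on points and lines is likewise immediate from $\kappa$ being a bijection on $\F_{q^2}$.

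Next I would verify that $\rho$ reverses incidence, i.e.\ that $P\in \ell \iff \rho(\ell)\in \rho(P)$, by case analysis on the three flavours of point and the three flavours of line (nine cases in total, most trivial). The only non-trivial case is the affine one: $(x,y)\in L_{a,b}$ means $y+b=f(x+a)$; applying $\kappa$, using its additivity together with hypotheses (a) and (b), this is equivalent to $\bar y+\bar b=f(\bar x+\bar a)$, which is precisely the statement $(\bar a,\bar b)\in L_{\bar x,\bar y}$, i.e.\ $\rho(\ell)\in\rho(P)$. The remaining cases (involving the points $(a)$ and $(\infty)$ and the lines $N_a$, $L_\infty$) follow directly from the bijectivity of $\kappa$ and the explicit description of incidences in $\Pi(f)$. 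Together with the involutive property, this shows $\rho$ is a polarity.

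Finally I would count absolute points. For $P=(x,y)$, absoluteness means $(x,y)\in L_{\bar x,\bar y}$, equivalently $y+\bar y=f(x+\bar x)$; by hypothesis (c), for each of the $q^2$ choices of $x$ there are exactly $q$ admissible $y$, giving $q^3$ absolute affine points. For $P=(a)$ with $a\in\F_{q^2}$, absoluteness would require $(a)\in N_{\bar a}$, which never holds since the only point at infinity on $N_{\bar a}$ is $(\infty)$. For $P=(\infty)$, we have $\rho(P)=L_\infty$ and $(\infty)\in L_\infty$, contributing one more absolute point. Hence there are exactly $q^3+1$ absolute points, so $\rho$ is a unitary polarity of the order-$q^2$ plane $\Pi(f)$, and by the cited result of Hughes--Piper the set of absolute points, namely $\U$ as defined in \eqref{eq:unital_polarity}, is a unital. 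The only step that requires any thought is the affine incidence check, where condition (b) (commutation of $f$ and $\kappa$) is essential; the rest is bookkeeping.
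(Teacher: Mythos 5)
Your proposal is correct and follows essentially the same route as the paper: the key computation (applying $\kappa$ to the affine incidence relation and using additivity with hypotheses (a) and (b)) and the count of absolute points ($q^3$ affine ones via (c), none of type $(a)$, plus $(\infty)$), together with the cited Hughes--Piper result, are exactly the paper's argument. The only cosmetic difference is that you define $\rho$ explicitly on lines and check incidence reversal plus $\rho^2=\mathrm{id}$, whereas the paper verifies the same identity by mapping all points of the line $L_{\bar x,\bar y}$ and observing the image lines concur at $(x,y)$.
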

\begin{proof}
	First we show that $\rho$ induces a polarity. Let us look at the points on $L_{\bar{x},\bar{y}}$, which are $\{(u, f(\bar{x}+u)-\bar{y} ): u\in \F_{q^2} \} \cup \{(\bar{x})\}$. Under $\rho$, they are mapped to
	\[\{L_{\bar{u}, \overline{f(\bar{x}+u)-\bar{y}}}: u\in \F_{q^2}  \} \cup \{ N_{x} \}. \]
	Under the assumption (a) and (b), the line $L_{\bar{u}, \overline{f(\bar{x}+u)-\bar{y}}}=L_{\bar{u}, f(x+\bar{u})-y}$. 
	All of these lines intersect in the point $(x,y)$. Hence $\rho(\rho(x,y))=(x,y)$. Similarly, we can prove that $\rho(N_{\bar{a}})= (a)$ and  $\rho(L_{\infty})= (\infty)$. Therefore, $\rho$ defines a polarity on $\Pi(f)$.
	
	It is not difficult to check that $(\infty)$ is an absolute point of $\rho$ and for each $a\in \F_{q^2}$ the point $(a)$ is not absolute. For each affine point $(x,y)$, it is absolute if and only if $(x,y)$ is on $L_{\bar{x}, \bar{y}}$, i.e.\ 
	\[y= f(x+\bar{x})-\bar{y}.\]
	Hence the set defined by \eqref{eq:unital_polarity} is the set of absolute points of $\rho$. From the assumption (c), we know that there are totally $q\cdot q^2+1$ points in $\U$. Therefore, $\U$ is a unital.
\end{proof}

When $\Pi(f)$ is a commutative semifield plane of order $q^2$, i.e.\ $f$ can be written as a Dembowski-Ostrom polynomial over $\F_{q^2}$, the unitals derived from unitary polarities on $\Pi(f)$ are intensively investigated by Ganley in \cite{ganley_polarities_1972}. For the Coulter-Matthews planes, the unitary polarities and derived unitals are considered by Knarr and Stroppel in \cite{knarr_polarities_2010}.

For the power planar functions (see Theorem \ref{th:many.planar.functions.1}), we take $\kappa(x)=\bar{x}=x^q$ for $x\in \F_{q^2}$; for the Dickson's semifields and the one constructed in \cite{zhou_new_2013} (see Theorem \ref{th:many.planar.functions} (a) and (b)), we set $\kappa(x)=\bar{x}=x_0-x_1\xi $ for $x=x_0+x_1\xi\in \F_{q^2}$. It is readily to verify that for these two maps $\kappa$, the three assumptions in Lemma \ref{le:polarity} are satisfied.

Recall that two unitals $\U_1$ and $\U_2$ in a projective plane $\Pi$ are equivalent if there is a collineation of $\Pi$ mapping $\U_1$ to $\U_2$. One of the goals of this section is to prove the following theorem.

\begin{theorem}\label{th:inequivalence_unitals_semifields}
	Let $f$ be a Dembowski-Ostrom polynomial over $\F_{q^2}$ such that  $x\mapsto f(x)$ is a planar function on $\F_{q^2}$. Let $\kappa$ be a map on $\F_{q^2}$ satisfying all the assumptions in Lemma \ref{le:polarity} and $\U$ the unital derived from $\kappa$. Let $\U_\theta$ be a unital defined in Proposition \ref{prop:general}. Then $\U$ and $\U_\theta$ are not equivalent.
\end{theorem}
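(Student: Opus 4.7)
The plan is to compare the stabilizers of $\U_\theta$ and $\U$ inside the shift group $T$ of $\Pi(f)$, using that the stabilizer of $\U_\theta$ in $T$ is abelian of order $q^3$ and acts regularly on the $q^3$ affine points of $\U_\theta$, whereas the analogous stabilizer of $\U$ has order only $q^2$, and then arguing that this mismatch cannot be repaired by any collineation of $\Pi(f)$.

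First, I would compute $\mathrm{Stab}_T(\U_\theta)$ directly from the description in Proposition \ref{prop:general}: $\tau_{u,v}(\U_\theta) = \U_\theta$ iff $\{t\theta + v : t \in \F_q\} = \theta\F_q$, which holds iff $v \in \theta\F_q$. The resulting subgroup $S_\theta := \{\tau_{u, v} : u \in \F_{q^2},\, v \in \theta\F_q\}$ is abelian of order $q^3$ and acts regularly on the $q^3$ affine points of $\U_\theta$.

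Next, I would compute $\mathrm{Stab}_T(\U)$ using the equation $y + \bar{y} = f(x + \bar{x})$ from Lemma \ref{le:polarity}. The condition $\tau_{u,v}(\U) = \U$ rewrites, with $c = u + \bar{u}$ and $d = v + \bar{v}$, as $f(X - c) + d = f(X)$ for every $X$ in the image of $\mathrm{id} + \kappa$. Since $f$ is a Dembowski-Ostrom polynomial it is a quadratic form with an associated symmetric bilinear form $B$ which is non-degenerate by planarity of $f$; the condition becomes $B(X, c) = f(c) + d$ for all such $X$. Because the restriction of $B(\cdot, c)$ to the image of $\mathrm{id} + \kappa$ is a non-trivial $\F_q$-linear functional unless $c = 0$, we must have $c = 0$, which forces $d = 0$. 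Hence $|\mathrm{Stab}_T(\U)| = q \cdot q = q^2$.

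Suppose for contradiction that there is a collineation $\phi$ of $\Pi(f)$ with $\phi(\U) = \U_\theta$. Conjugating $S_\theta$ by $\phi^{-1}$ produces an abelian subgroup $G \leq \Aut_{\Pi(f)}(\U)$ of order $q^3$ that acts regularly on the $q^3$ points of $\U \setminus \{\phi^{-1}((\infty))\}$. The heart of the proof, and what I expect to be the main obstacle, is to show that no such $G$ can exist inside $\Aut_{\Pi(f)}(\U)$. This is where the polarity $\rho$ enters: every stabilizer of a point $P \in \U$ inside $\Aut_{\Pi(f)}(\U)$ must also fix the tangent line $\rho(P)$, and a Sylow-type analysis — paralleling the classical fact that the $p$-Sylow subgroup of $\mathrm{P\Gamma U}(3, q)$ is a non-abelian Heisenberg-type group — yields that the $p$-Sylow subgroups of $\Aut_{\Pi(f)}(\U)$ are non-abelian, forbidding any abelian subgroup of order $q^3$ with the required regular action. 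This contradicts the existence of $G$ and therefore establishes that $\U$ and $\U_\theta$ are inequivalent.
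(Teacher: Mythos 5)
Your computations of the two stabilizers inside the shift group $T$ are essentially right: $\mathrm{Stab}_T(\U_\theta)=\{\tau_{u,v}\colon u\in\F_{q^2},\,v\in\theta\F_q\}$ is abelian of order $q^3$, and for $\U$ the condition $f(X+c)-f(X)=d$ for all $X$ in the image of $\mathrm{id}+\kappa$ (with $c=u+\bar u$, $d=v+\bar v$) does force $c=d=0$, since for $c\neq 0$ the $\F_p$-linear map $X\mapsto \frac12\bigl(f(X+c)-f(X)-f(c)\bigr)$ is injective by planarity; so $\lvert\mathrm{Stab}_T(\U)\rvert=q^2$. The genuine gap is the final step. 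Unlike the Coulter--Matthews case, where the paper can quote Dempwolff--R\"oder to get $T$ normal in the full collineation group, for a commutative semifield plane $T$ is in general not normal in $\Aut(\Pi(f))$ (in the Desarguesian plane $\Pi(x^2)$ it certainly is not, as $\mathrm{P\Gamma L}(3,q^2)$ is flag-transitive). Hence conjugating $S_\theta$ by $\phi^{-1}$ produces an abelian order-$q^3$ subgroup of $\Aut_{\Pi(f)}(\U)$ that need not lie in $T$, and the count $\lvert\mathrm{Stab}_T(\U)\rvert=q^2$ tells you nothing about it. You acknowledge this, but the replacement you sketch --- ``the Sylow $p$-subgroups of $\Aut_{\Pi(f)}(\U)$ are non-abelian, hence no abelian subgroup of order $q^3$ exists'' --- is not a proof: the order and structure of $\Aut_{\Pi(f)}(\U)$ are unknown (this is precisely the difficulty behind Conjecture \ref{conjecture:automorphism.main}), and even granting non-abelian Sylow $p$-subgroups, a non-abelian $p$-group of order larger than $q^3$ can contain abelian subgroups of order $q^3$, so the conclusion does not follow; the paper's own analysis of the classical unital needs exactly the fact that there the relevant Sylow subgroup has order precisely $q^3$, or a separate computation when $p\mid n$.

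The paper closes this hole by enlarging $T$ instead of analyzing $\Aut_{\Pi(f)}(\U)$: it works in the group $\Sigma=\{\varsigma_{u,v,w}\}$ of order $q^6$ generated by the shifts and the elations with center $(\infty)$ and axis $N_0$ (Lemma \ref{le:translation+shear_group}), which \emph{is} normal in $\Aut(\Pi(f))$ (Lemma \ref{le:normal_sigma}). Inside $\Sigma$ the stabilizer of $\U_\theta$ is the abelian group $\Sigma_1=\{\varsigma_{u,v,0}\colon v\in\theta\F_q\}$ of order $q^3$, while the stabilizer of $\U$ is $\Sigma_2=\{\varsigma_{u,v,u+\bar u}\colon f(u+\bar u)=-(v+\bar v)\}$, again of order $q^3$ but non-abelian by \eqref{eq:multiplication_varsigma}; normality of $\Sigma$ forces these two stabilizers to be conjugate under any collineation carrying $\U_\theta$ to $\U$, and the abelian/non-abelian mismatch gives the contradiction. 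If you want to repair your argument, the fix is the same: replace $T$ by a normal subgroup large enough that its stabilizer of $\U$ already exhibits non-commutativity (i.e.\ adjoin the shears $\varsigma_{0,0,w}$), rather than attempting a Sylow analysis of the unknown group $\Aut_{\Pi(f)}(\U)$.
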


To prove Theorem \ref{th:inequivalence_unitals_semifields}, we proceed to consider the collineation group of $\Pi(f)$. Slightly different from  Lemma \ref{le:ONan_without_infty}, we define $x*y := \frac{1}{2}(f(x+y)-f(x)-f(y))$. It is readily to verify that $x*x=f(x)$ if $f$ is a Dembowski-Ostrom polynomial.
\begin{lemma}\label{le:translation+shear_group}
	Let $u,v,w\in \F_{q^2}$. Let $f$ be a Dembowski-Ostrom polynomial over $\F_{q^2}$ which defines a planar function. Let the map $\varsigma_{u,v,w}$ on the points of $\Pi(f)$ be defined by
	\begin{eqnarray*}
		(x,y) & \mapsto & (x+u,y+ 2w*x -v),\\
		(x) & \mapsto & (x+w-u),\\
		(\infty) & \mapsto & (\infty),
	\end{eqnarray*}
	for each $x,y\in \F_{q^2}$. Then the following statements hold.
	\begin{enumerate}[label=(\alph*)]
		\item All $\varsigma_{u,v,w}$ form a collineation group $\Sigma$ of order $q^6$. For $u,v,w,u', v',w'\in \F_{q^2}$,
		\begin{equation}\label{eq:multiplication_varsigma}
			\varsigma_{u',v',w'}\circ \varsigma_{u,v,w} = \varsigma_{u+ u',v+v'-2w'*u,w+w'}
		\end{equation}
		\item The shift group $T=\{\varsigma_{u,v,0}:u,v\in \F_{q^2}\}$.
		\item $\{\varsigma_{u,v,u}:u,v\in \F_{q^2}\}$ is the translation group of $\Pi(f)$.
		\item $\{\varsigma_{0,0,w}:w\in \F_{q^2}\}$ is the group of elations with the axis $N_0$ and the center $(\infty)$.
	\end{enumerate}
\end{lemma}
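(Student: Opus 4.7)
The plan rests on the bilinear symmetric form $x*y=\tfrac{1}{2}(f(x+y)-f(x)-f(y))$, which is well-defined on $\F_{q^2}\times\F_{q^2}$ and satisfies $x*x=f(x)$ precisely because $f$ is a Dembowski--Ostrom polynomial. Every manipulation below reduces to the identity $f(x+y)=f(x)+f(y)+2x*y$ together with the bilinearity and symmetry of $*$, while the planar hypothesis enters through the fact that $a\mapsto 2a*c$ is a bijection on $\F_{q^2}$ for every $c\in\F_{q^2}^*$.

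For (a), I would verify that $\varsigma_{u,v,w}$ sends each line of $\Pi(f)$ to a line by a direct trace on the three line orbits. Substituting $X=x+u$ in the parametrisation of $L_{a,b}$ and expanding $f(X+(a-u)+w)$ with the DO identity gives
\[
\varsigma_{u,v,w}(L_{a,b})=L_{a+w-u,\; f(w)+2a*w+b+v},
\]
and analogous calculations yield $\varsigma_{u,v,w}(N_a)=N_{a+u}$ and $\varsigma_{u,v,w}(L_\infty)=L_\infty$. The composition identity \eqref{eq:multiplication_varsigma} then follows by applying $\varsigma_{u',v',w'}$ to $\varsigma_{u,v,w}(x,y)$ and collecting terms with $w'*(x+u)=w'*x+w'*u$. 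Distinct triples $(u,v,w)$ produce distinct maps (evaluate on $(0,0)$ to recover $(u,-v)$, then on $(0)$ to recover $w-u$), which gives $|\Sigma|=q^6$.

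Parts (b) and (c) are short consequences. For (b), the line-image formula in (a) forces the shift $\tau_{u,v}$ to act on $L_\infty$ by $(a)\mapsto(a-u)$, and a direct comparison identifies $\tau_{u,v}=\varsigma_{u,-v,0}$, so $T=\{\varsigma_{u,v,0}:u,v\in\F_{q^2}\}$. For (c), each $\varsigma_{u,v,u}$ fixes every point of $L_\infty$ because the action on infinity is $a\mapsto a+u-u=a$; since $f$ is Dembowski--Ostrom, $\Pi(f)$ is a translation plane whose full translation group has order $q^4$, and the $q^4$ elements $\varsigma_{u,v,u}$ already consist of elations with axis $L_\infty$ acting transitively on affine points (via $\varsigma_{u,v,u}(0,0)=(u,-v)$), so equality follows by cardinality.

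For (d), each $\varsigma_{0,0,w}$ fixes $N_0$ pointwise (since $2w*0=0$) and preserves every $N_a$, hence $(\infty)$ is its center, making it an elation with axis $N_0$ and center $(\infty)$. The group of such elations has order at most $q^2$ in a plane of order $q^2$, and the planar property shows that the $q^2$ maps $\varsigma_{0,0,w}$ are pairwise distinct, because if $(w_1-w_2)*a=0$ for every $a$ then bijectivity of $a\mapsto 2a*(w_1-w_2)$ forces $w_1=w_2$. Thus $\{\varsigma_{0,0,w}\}$ realises the full group. The main bookkeeping obstacle is keeping the three line-image formulas in (a) consistent with \eqref{eq:multiplication_varsigma}; once those are settled, parts (b)--(d) fall out from cardinality comparisons and the translation-plane structure of $\Pi(f)$.
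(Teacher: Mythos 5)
Your proof is correct and follows essentially the same route as the paper: verify the images of the three line types under $\varsigma_{u,v,w}$, check the composition law \eqref{eq:multiplication_varsigma}, and then deduce (b)--(d), with your cardinality arguments for the translation group and for the elation group with centre $(\infty)$ and axis $N_0$ simply filling in what the paper dismisses as routine. One small point in your favour: your line-image formula $\varsigma_{u,v,w}(L_{a,b})=L_{a+w-u,\;f(w)+2a*w+b+v}$ is the correct one for the map as defined, whereas the paper's proof writes $L_{a+w-u,\;b+f(w)-v}$, apparently dropping the cross term $2a*w$ and with a sign slip on $v$ -- a typo that does not affect the conclusions (a)--(d).
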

\begin{proof}
	For any $u,v,w,a,b$ and $x\in \F_{q^2}$,
		\[\varsigma_{u,v,w} :(x, f(x+a)-b)\mapsto (x+u, f(x+a)+2 w*x-b+v) \]
	which equals $(x+u, f(x+a+w)-f(w)-b+v)$.
	Together with $\varsigma_{u,v,w}(x)\mapsto (x+w-u)$, we see that the line $L_{a,b}$ is mapped to $L_{a+w-u,b+f(w)-v}$ under $\varsigma_{u,v,w}$. Similarly, we can show that $L_\infty$ is fixed and $N_a$ is mapped to $N_{a+u}$. Hence $\varsigma_{u,v,w}$ is a collineation.
	
	Moreover, it is routine to verify \eqref{eq:multiplication_varsigma} and that all $\varsigma_{u,v,w}$ form a collineation group $\Sigma$. 
	
	From (a) and $\varsigma_{u,v,w}(L_{a,b})=L_{a+w-u,b+f(w)-v}$, it is not difficult to derive (b), (c) and (d).
\end{proof}

The following lemma can be found in \cite[Lemma 8.5]{hughes_projective_1973}.
\begin{lemma}\label{le:normal_sigma}
	With the notation in Lemma \ref{le:translation+shear_group}, $\Sigma$ is a normal subgroup of $\Aut(\Pi(f))$.
\end{lemma}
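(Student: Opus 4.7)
The plan is to give an intrinsic geometric characterization of $\Sigma$ that is manifestly preserved by conjugation in $\Aut(\Pi(f))$, and to use the Dembowski-Ostrom hypothesis on $f$ to pin down the flag $((\infty),L_\infty)$ inside $\Pi(f)$.

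\emph{Step 1: Identify $\Sigma$ as a group generated by elations.} By parts (c) and (d) of Lemma~\ref{le:translation+shear_group}, $\Sigma$ contains the full translation group $\mathcal{T}=\{\varsigma_{u,v,u}: u,v\in\F_{q^2}\}$ (elations with axis $L_\infty$) together with the shear group $\mathcal{E}_0=\{\varsigma_{0,0,w}: w\in\F_{q^2}\}$ (elations with axis $N_0$ and center $(\infty)$). Conjugating $\mathcal{E}_0$ by a translation in $\mathcal{T}$ that carries $N_0$ to $N_c$ produces the group of all elations with axis $N_c$ and center $(\infty)$. Combining these with $\mathcal{T}$ and matching cardinalities shows that $\Sigma$ is precisely the subgroup of $\Aut(\Pi(f))$ generated by the elations whose axis is $L_\infty$, together with those whose center is $(\infty)$.

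\emph{Step 2: Show that the flag $((\infty),L_\infty)$ is $\Aut(\Pi(f))$-invariant.} Since $f$ is a Dembowski-Ostrom polynomial, $\Pi(f)$ is a commutative semifield plane, so $L_\infty$ is a translation line, and the commutativity of the semifield forces $N_0$ to be a second translation line meeting $L_\infty$ in exactly $(\infty)$. Any $\alpha\in\Aut(\Pi(f))$ permutes the (finite) set of translation lines of $\Pi(f)$; a careful analysis of this action on pairwise intersections, in the spirit of \cite[Lemma~8.5]{hughes_projective_1973}, forces both $\alpha((\infty))=(\infty)$ and $\alpha(L_\infty)=L_\infty$.

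\emph{Step 3: Conclude normality.} Any $\alpha\in \Aut(\Pi(f))$ maps a central collineation with center $P$ and axis $\ell$ to a central collineation with center $\alpha(P)$ and axis $\alpha(\ell)$. Combining this with Step~2, $\alpha\Sigma\alpha^{-1}$ is generated by elations with axis $\alpha(L_\infty)=L_\infty$ and elations with center $\alpha((\infty))=(\infty)$, which by Step~1 are exactly the generators of $\Sigma$. Hence $\alpha\Sigma\alpha^{-1}=\Sigma$, proving normality.

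The main obstacle is Step~2: a priori a collineation of $\Pi(f)$ might move $(\infty)$ or $L_\infty$, and ruling this out for general commutative semifield planes is precisely the technical content of \cite[Lemma~8.5]{hughes_projective_1973}. Once this flag-stabilization is in hand, the rest of the argument is essentially a tautology about how conjugation acts on elations.
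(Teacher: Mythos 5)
Your Steps 1 and 3 are essentially fine: Step 1 needs, besides Lemma~\ref{le:translation+shear_group}(c),(d), the standard semiregularity bounds showing that those subgroups are the \emph{full} groups of elations with axis $L_\infty$, respectively with center $(\infty)$ and axis $N_a$, after which $|\mathcal{T}\mathcal{E}_0|=q^6$ identifies $\Sigma$ with the group generated by all such elations; and Step 3 is then routine. The genuine gap is Step 2, in two respects. First, its justification is wrong: commutativity of the semifield does \emph{not} make $N_0$ a translation line. What the semifield structure gives is $((\infty),N_a)$-transitivity for every $a$ (the shear groups), i.e.\ Lenz class V; if $N_0$ really were a second translation line, the Lenz figure would exceed class V and the plane would be Moufang, hence Desarguesian --- and in that situation no line or point can be singled out, so your ``careful analysis of the set of translation lines'' could not pin down the flag. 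Second, and more seriously, the flag-invariance claim is false precisely in the Desarguesian case, which is within the scope of the statement since $f(x)=x^2$ is a Dembowski--Ostrom polynomial: there $\Aut(\Pi(f))\cong\mathrm{P\Gamma L}(3,q^2)$ is flag-transitive, and $\Sigma$ is the full unipotent stabilizer of the flag $((\infty),L_\infty)$, a Sylow $p$-subgroup of $\mathrm{PGL}(3,q^2)$; by your own Steps 1 and 3, conjugating by a collineation that moves the flag yields the analogous group for the image flag, so $\Sigma$ is not normal. Hence Step 2 cannot be completed as stated; any correct argument must restrict to non-Desarguesian $\Pi(f)$ (an issue your attempt inherits from the lemma itself, which the paper does not prove but simply quotes from \cite[Lemma 8.5]{hughes_projective_1973}).

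For non-Desarguesian $\Pi(f)$ the invariance does hold, but by a different mechanism than the one you propose: the translation line of a finite non-Moufang translation plane is unique, so every collineation fixes $L_\infty$; and $(\infty)$ is the unique point that is the center of nontrivial elations with an affine axis (any further such center would again push the Lenz figure beyond class V), so every collineation fixes $(\infty)$ as well. With that replacement for Step 2 --- and the non-Desarguesian hypothesis made explicit --- your Steps 1 and 3 do give a complete proof; as written, however, Step 2 both rests on a false claim and defers its real content back to the very citation the paper uses, so the crux of the lemma is not actually established.
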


Now let us prove Theorem \ref{th:inequivalence_unitals_semifields}.
\begin{proof}
	Let us first look at the elements in $\Sigma$ fixing $\U_\theta$. If $\varsigma_{u,v,w}$ fixes $\U_\theta$, then $w=0$. Otherwise, for arbitrary $y\in \F_{q^2}$ and $t\in \F_q$, there is always a unique solution $x$ such that
	\[t\theta + 2 w* x-v=y.\]
	It implies that $(x+u, t\theta+2w*x-v)\not\in \U_\theta$ when we choose $y\in \F_{q^2}\setminus {\theta}\F_q$.
	
	Furthermore, it is also readily to verify that $\varsigma_{u,v,0}$ fixes $\U_\theta$ if and only if $v\in \theta\F_q$. Therefore, the subgroup in $\Sigma$ fixing $\U_\theta$ is 
	\[\Sigma_1:=\{\varsigma_{u,v,0}: u\in\F_{q^2}, v\in \theta\F_q \}, \]
	which is abelian.
	
	Second, we consider the elements in $\Sigma$ fixing $\U$. As there is only the point $(\infty)$ on $L_\infty$ which is fixed by $\Sigma$, we only have to consider the affine points in $\U$. Let $(x,y)$ be an arbitrary point in $\U$, which means that $f(x+\bar{x})=y+\bar{y}$. The point $\varsigma_{u,v,w}(x,y)=(x+u,y+2w*x-v)$ is still in $\U$, which implies that
	\[f(x+u + \bar{x} + \bar{u})= y+2w*x-v + \overline{y+2w*x-v}.\]
	From the above equation,  as well as
	\begin{align*}
		 & f(x+u + \bar{x} + \bar{u})\\
		=& f(x+\bar{x}) + 2(x+\bar{x})*(u+\bar{u})  +f(u+\bar{u})
	\end{align*}
	and
	\begin{align*}
		 & y+2w*x-v + \overline{y+2w*x-v}\\
		=& f(x+\bar{x}) +2w*x+2\bar{w}*\bar{x}-(v+\bar{v}),
	\end{align*}
	we deduce that 
	\begin{equation}\label{eq:fixing_U}
		2(u+\bar{u})*(x+\bar{x}) + f(u+\bar{u}) = 2\bar{w}*\bar{x} + 2w*x-(v+\bar{v}).
	\end{equation}
	As \eqref{eq:fixing_U} holds for arbitrary $x\in \F_{q^2}$ (see the assumption (c) in Lemma \ref{le:polarity}),  we have
	\begin{eqnarray*}
		w &=& u+\bar{u},\\
		-(v+\bar{v}) &=& f(u+\bar{u}).
	\end{eqnarray*}
	Hence the subgroup in $\Sigma$ fixing $\U$ is
	\[\Sigma_2 := \{\varsigma_{u,v,u+\bar{u}}:  f(u+\bar{u})=-(v+\bar{v})  \}.\]
	From the assumption (c) in Lemma \ref{le:polarity}, there are totally $q^3$ elements in $\Sigma_2$. By Lemma \ref{le:translation+shear_group} (a), we see that $\Sigma_2$ is not abelian.
	
	Assume to the contrary that $\U_\theta$ is equivalent to $\U$, i.e.\ there exists $\varphi\in \Aut(\Pi(f))$ such that $\varphi(\U_\theta)=\U$. As $\Sigma$ is normal in $\Aut(\Pi(f))$, the two subgroups $\Sigma_1$ and $\Sigma_2$ are conjugate, which contradicts the fact that $\Sigma_1$ is abelian but $\Sigma_2$ is not.
\end{proof}
%
%In \cite{knarr_polarities_2010}, Knarr and Stroppel investigate polar unitals, i.e. the absolute points of certain polarities, in the Coulter-Matthews planes. Let $d=\frac{3^k+1}{2}$ in Theorem \ref{th:many.planar.functions.1} (3), and $\bar{x}=x^q$ for $x\in\F_{q^2}$. Then the polar unital defined over $\Pi(x^d)$ is the following set of points.
%\begin{equation}
%	\U :=\{(\infty)\}\cup \{(x,y): y+\bar{y}= (x+\bar{x})^d.\}
%\end{equation}

Clearly Theorem \ref{th:inequivalence_unitals_semifields} does not cover the Coulter-Matthews planar functions. To prove a similar result for the inequivalence between the two unitals $\U$ and $\U_\theta$, we need the following result by Dempwolff and R\"{o}der in \cite{dempwolff_finite_2006}.
\begin{theorem}\label{th:aut.CM.plane}
	Let $\Pi(x^d)$ be a Coulter-Matthews plane of order $3^m$. Then its automorphism group is
	\[\Aut(\Pi(x^d)) =T \rtimes\Gamma, \]
	where $T\cong(\F_{3^{m}},+)^2$ and $\Gamma \cong \Gamma L (1,3^{m})$.
\end{theorem}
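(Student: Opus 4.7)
The plan is to exhibit $T\rtimes\Gamma$ as a subgroup of $\Aut(\Pi(x^d))$ using constructions already present in the paper, and then to obtain a matching upper bound on $|\Aut(\Pi(x^d))|$.

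For the lower bound, I would identify two already-visible subgroups. The shift group $T=\{\tau_{u,v}:(x,y)\mapsto(x+u,y+v)\mid u,v\in\F_{3^m}\}$, introduced just after the definition of $\Pi(f)$, is isomorphic to $(\F_{3^m},+)^2$ and acts regularly on the $3^{2m}$ affine points. The collineations $\gamma_{c,\sigma}$ of Proposition~\ref{prop:unital_powerplanar_orbit}, with $c\in\F_{3^m}^*$ and $\sigma\in\Gal(\F_{3^m}/\F_3)$, fix $(\infty)$, $L_\infty$ setwise, and the affine origin $(0,0)$, and under composition form a group $\Gamma\cong \Gamma L(1,3^m)$ of order $m(3^m-1)$.

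For the semidirect-product structure, $T$ acts freely on the affine points while every element of $\Gamma$ fixes $(0,0)$, so $T\cap\Gamma=\{1\}$. A direct computation with the explicit formulas shows that $\Gamma$ normalises $T$: one checks that $\gamma_{c,\sigma}\tau_{u,v}\gamma_{c,\sigma}^{-1}=\tau_{\sigma(cu),\sigma(c^d v)}\in T$. Hence $T\rtimes\Gamma\le\Aut(\Pi(x^d))$ has order $3^{2m}\cdot m(3^m-1)$.

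The main work, and the genuine obstacle, is the matching upper bound. Because $T$ is transitive on the affine points, it suffices to show that the stabiliser $S$ of $(0,0)$ in $\Aut(\Pi(x^d))$ has order at most $m(3^m-1)$. I would proceed in three substeps. First, using that $\Pi(x^d)$ is \emph{not} a translation plane (Theorem~\ref{th:many.planar.functions.1}(c)), invoke a Hering-type restriction on $p$-elements to conclude that any $\varphi\in S$ must also fix $(\infty)$ and $L_\infty$, so $\varphi$ stabilises the flag $((\infty),L_\infty)$ together with $(0,0)$. Second, deduce that any such $\varphi$ acts on affine points as $(x,y)\mapsto(\alpha(x),\beta(y))$ for a pair of additive bijections $\alpha,\beta$ of $\F_{3^m}$, and that preservation of the line class $L_{a,b}$ forces the functional equation $\beta(x^d)=\alpha(x)^d$ for every $x\in\F_{3^m}$. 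Third, classify the additive pairs $(\alpha,\beta)$ satisfying this equation and show that they must all have the form $(\sigma\circ\mu_c,\sigma\circ\mu_{c^d})$, where $\mu_c$ denotes multiplication by $c$; this is the heart of the Dempwolff--R\"oder analysis. The key arithmetic input is $\gcd(d,3^m-1)=2$ with $d=(3^k+1)/2$ and $\gcd(k,2m)=1$, which one can exploit via a Hasse--Weil bound on the $\F_{3^m}$-rational points of the plane curve $\beta(x^d)-\alpha(x)^d=0$ (or, more elementarily, by a direct factorisation analysis in $\F_{3^m}[x]$) to rule out any non-monomial, non-semilinear solution. Once this last classification is in hand, one obtains $|S|=m(3^m-1)$ and the equality $\Aut(\Pi(x^d))=T\rtimes\Gamma$ follows by an order count.
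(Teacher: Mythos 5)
The paper does not prove this statement at all: it is imported verbatim from Dempwolff and R\"oder \cite{dempwolff_finite_2006} and used as a black box in the proof of Theorem \ref{th:inequivalence_unitals_CM}. So the relevant comparison is between your sketch and the external Dempwolff--R\"oder argument, and judged as a proof your proposal has a genuine gap. The lower-bound half is fine and easily checkable: $T$ and the maps $\gamma_{c,\sigma}$ of Proposition \ref{prop:unital_powerplanar_orbit} are collineations, $T\cap\Gamma=\{1\}$, and your conjugation formula $\gamma_{c,\sigma}\tau_{u,v}\gamma_{c,\sigma}^{-1}=\tau_{\sigma(cu),\sigma(c^d v)}$ is correct, so $T\rtimes\Gamma\le\Aut(\Pi(x^d))$ of order $3^{2m}\cdot m(3^m-1)$. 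But that part is not the content of the theorem; the content is the upper bound, and there each of your three substeps is asserted rather than proved.

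Concretely: (i) the claim that every collineation fixing $(0,0)$ also fixes the flag $((\infty),L_\infty)$ is justified only by an unspecified ``Hering-type restriction on $p$-elements''; the standard route is that a Coulter--Matthews plane is of Lenz--Barlotti class II.1, so the unique incident point--line pair with respect to which the plane is transitive is invariant under all collineations -- but this uniqueness must itself be established or cited, and your sketch does neither. (ii) The reduction of the flag-and-origin stabiliser to maps of the product form $(x,y)\mapsto(\alpha(x),\beta(y))$ with $\alpha,\beta$ \emph{additive} is taken for granted; a priori such a collineation sends $(x,y)$ to $(\alpha(x),\beta_x(y))$ with the second coordinate depending on $x$, and additivity (equivalently, normalisation of the shift group $T$ inside the full group, which is itself part of what is being proved) requires an argument. (iii) The classification of additive pairs satisfying $\beta(x^d)=\alpha(x)^d$ -- which you yourself identify as ``the heart of the Dempwolff--R\"oder analysis'' -- is only gestured at via Hasse--Weil or ``a direct factorisation analysis,'' with no indication of how either would actually exclude non-semilinear solutions. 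Since all of the difficulty of the theorem is concentrated in exactly these three points, what you have is an outline of the cited proof rather than a proof; if you intend to rely on \cite{dempwolff_finite_2006} (as the paper does), the honest thing is to cite it, and if you intend to reprove it, substeps (i)--(iii) must be carried out in full.
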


%In fact, for each $(a,b)\in \F_{3^{m}}^2$, the corresponding permutation on the affine points of $\Pi(x^d)$ is:
%\[\tau_{a,b}:(x,y)\mapsto (x+a,y+b). \]
In Theorem \ref{th:aut.CM.plane}, the group $T$ is the shift group (see Section \ref{sec:construction}). The definition of $\Gamma$ can be found in Proposition \ref{prop:unital_powerplanar_orbit}.

Using Theorem \ref{th:aut.CM.plane}, we can prove the following results.
\begin{theorem}\label{th:inequivalence_unitals_CM}
	Let $m=2n$ and $\Pi(x^d)$ a Coulter-Matthews plane of order $3^m$. The subgroups of $T$ respectively fixing $\U$ and $\U_\theta$ are of size $3^{2n}$ and $3^{3n}$. Therefore $\U$ and $\U_\theta$ are inequivalent.
\end{theorem}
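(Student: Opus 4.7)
The plan is to mimic the proof of Theorem \ref{th:inequivalence_unitals_semifields}: compute the stabilizers $T\cap \Aut(\U_\theta)$ and $T\cap \Aut(\U)$ inside the shift group $T=\{\tau_{u,v}:u,v\in\F_{q^2}\}$, show the two orders differ, and conclude inequivalence via the normality of $T$ in $\Aut(\Pi(x^d))$ that is supplied by Theorem \ref{th:aut.CM.plane}. For $\U_\theta$ the calculation is exactly the one already used in the proof of Theorem \ref{th:inequivalence_unitals_semifields}: $\tau_{u,v}(x,y)=(x+u,y+v)$ preserves $\U_\theta$ iff $v\in\theta\F_q$, with no condition on $u$, so $|T\cap \Aut(\U_\theta)|=q^2\cdot q=q^3=3^{3n}$.

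For $\U$ I would take the natural involution $\kappa(x)=x^q$, so that by Lemma \ref{le:polarity}
\[
\U=\{(x,y):\ y+y^q=(x+x^q)^d\}\cup\{(\infty)\}.
\]
Imposing $\tau_{u,v}(\U)=\U$ and substituting $s:=x+x^q$ and $t:=u+u^q$ (which range independently over $\F_q$ as $x,u$ range over $\F_{q^2}$) converts the defining equation into the requirement that $(s+t)^d-s^d=v+v^q$ for every $s\in\F_q$. Once one shows that this forces $t=0$, the residual constraint $v+v^q=0$ also cuts out a subspace of size $q$, so both $u+u^q=0$ and $v+v^q=0$ contribute exactly $q$ solutions in $\F_{q^2}$ and $|T\cap \Aut(\U)|=q\cdot q=q^2=3^{2n}$. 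Inequivalence is then immediate: if some $\varphi\in\Aut(\Pi(x^d))$ satisfied $\varphi(\U_\theta)=\U$, normality of $T$ in $\Aut(\Pi(x^d))$ would force conjugation by $\varphi$ to carry $T\cap \Aut(\U_\theta)$ onto $T\cap \Aut(\U)$ and hence preserve cardinality, contradicting $3^{3n}\neq 3^{2n}$.

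The real content is therefore the polynomial-constancy step: for $t\neq 0$, $(s+t)^d-s^d$ cannot be constant on $\F_q$. I would reduce $d$ modulo $q-1$ to an exponent $d'\in\{1,\dots,q-1\}$ with $s^d=s^{d'}$ on $\F_q$, and write $d'=3^a m$ with $\gcd(m,3)=1$. Freshman's dream then gives $(s+t)^{d'}-s^{d'}=(s^{3^a}+t^{3^a})^m-(s^{3^a})^m$, a polynomial in $s$ of degree $3^a(m-1)$ with leading coefficient $m\,t^{3^a}$. A short arithmetic check based on $\gcd(k,2n)=1$ (in particular $j:=k\bmod n\in[1,n-1]$ for $n\ge 2$) shows $d'\in\{(3^j+1)/2,\ (3^j+3^n)/2\}$, and in both cases the part coprime to $3$ satisfies $m\ge 2$. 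Hence for $t\neq 0$ this polynomial is nonzero of degree strictly between $0$ and $q-1$ and therefore cannot represent a constant function on $\F_q$. The delicate point, and the main obstacle, is exactly this: a naive leading-coefficient argument breaks whenever $3\mid d'$, so one has to peel off the $3^a$-part before reading the true leading term; the boundary case $n=1$ can be handled by direct computation.
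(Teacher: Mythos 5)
Your proposal is correct and follows the paper's own route: compute the stabilizers of $\U_\theta$ and $\U$ inside the shift group $T$ (sizes $3^{3n}$ and $3^{2n}$ respectively) and conclude inequivalence from the normality of $T$ supplied by the Dempwolff--R\"{o}der theorem. The only difference is that you fill in, via the reduction of $d$ modulo $q-1$ and a degree/leading-coefficient argument, the non-constancy step that the paper dismisses as ``routine,'' and that argument is sound.
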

\begin{proof}
	Similarly as in the proof of Theorem \ref{th:inequivalence_unitals_semifields}, $\tau_{a,b}$ fixes $\U_\theta$ if and only if $b=s\theta$ for some $s\in \F_{3^m}$. We denote the group of all these $3^{3n}$ maps $\tau_{a,b}$ by $\Sigma_1$, which is also abelian.
	
	It is routine to show that $\tau_{a,b}$ fixes $\U$ if and only if $a+\bar{a}=0$ and $b+\bar{b}=0$. Hence there are totally $3^{2n}$ such elements in $T$. They form a group denoted by $\Sigma_2$.  
	
	As $T$ is a normal subgroup of $\Aut(\Pi(x^d))$, if $\U$ can be mapped to $\U_\theta$ under a collineation of $\Pi(x^d)$, then $\Sigma_2$ must be conjugate to $\Sigma_1$ which leads to a contradiction.
\end{proof}

To conclude this section, we proceed to consider the isomorphism between the classical unitals (Hermitian curves) $\mathcal{H}$ in $\mathrm{PG}(2,q^2)$ and $\U_\theta$ in any shift plane $\Pi(f)$. We use $\Aut(\mathcal{H})$ to denote the automorphism groups of the classical unitals $\mathcal{H}$ as designs. In \cite{onan_automorphisms_1972}, O'Nan proved that  
\[\Aut(\mathcal{H})\cong \mathrm{P\Gamma U}(3,q),\]
which is transitive on the point sets of $\mathcal{H}$.

Let $p=\mathrm{char}(\F_q)$ and $n$ is such that $q=p^n$. The order of $\Aut(\mathcal{H})$ is $2n(q^3+1)q^3(q^2-1)$ (\cite[Page 496 and 503]{onan_automorphisms_1972}). In \cite[Theorem 1]{baker_buekenhout-metz_1992}, Baker and Ebert proved that for a given point $(\infty)$ in $\mathcal{H}$, the collineation group $G$ of $PG(2,q^2)$ fixing $\mathcal{H}$ and $(\infty)$ is of order $2n q^3(q^2-1)$. Hence $G$ is exactly the stabilizer of $(\infty)$ in $\Aut(\mathcal{H})$. In $G$, there is a non-abelian subgroup $S$ of order $q^3$ and a subgroup $R$ corresponding to the field automorphism group of $\Gal(\F_{q^2}/\F_p)$, $R$ and $S$ intersect at the identity element of $G$ and $R$ normalize $S$; see \cite{baker_buekenhout-metz_1992} and \cite[Page 70]{barwick_unitals_2008}. %Let $G_0:=G\cap \mathrm{PGL}(3,q^2)$.

From the proofs of Theorems \ref{th:inequivalence_unitals_semifields} and \ref{th:inequivalence_unitals_CM}, we know that in the stabilizer of $(\infty)$ in the group $\Aut(\U_\theta)$, there is an abelian subgroup $\Sigma_1$ of order $q^3$. That means if there is no abelian subgroup of order $q^3$ in $G$, then $\mathcal{H}$ and $\U_\theta$ can never be isomorphic as designs.

When $\gcd(n,p)=1$, it is clear that the non-abelian group $S$ is a Sylow $p$-subgroup of $G$. By Sylow theorems, all subgroups of order $q^3$ in $G$ are non-abelian. 

When $\gcd(n,p)=p$, it is also not difficult to follow the calculations in \cite[Page 70]{barwick_unitals_2008} to verify that there is no abelian subgroups of order $q^3$ in $S\rtimes R$. Hence there is also no abelian subgroups of order $q^3$ in $G$.

Therefore, we have proved the following result.
%the stabilizer of $(\infty)$ is $H\rtimes \vartheta$\zhou{In \cite{onan_automorphisms_1972}, $H$ is defined on page 497 and $\vartheta$ is on page 503.}, where $H\subseteq \mathrm{PSU}(3,q)$ and $\vartheta$ is derived from the field automorphisms of $\F_{q^2}$. There is a normal subgroup $Q$ of order $q^3$ in $H$, and $Q$ is non-abelian and transitive on the points of $\mathcal{H}\setminus{(\infty)}$ \cite[Lemma 1.6]{onan_automorphisms_1972}. Thus $Q$ is a normal subgroup in the stabilizer $\Aut_\infty(\mathcal{H})$ of $(\infty)$ in $\Aut(\mathcal{H})$. The order of $\Aut_\infty(\mathcal{H})$ is $2n q^3(q^2-1)$, where $q=p^n$ and $p=\mathrm{char}(\F_q)$. Hence $Q$ is the unique subgroup of order $q^3$ in $\Aut_\infty(\mathcal{H})$. See \cite[Theorem 1]{baker_buekenhout-metz_1992} for the calculations of $Q$ and $\Aut_\infty(\mathcal{H})$ too.

\begin{theorem}
	Let $f$ be a planar function on $\F_{q^2}$. If $f$ can be written as a Dembowski-Ostrom polynomial over $\F_{q^2}$ or $f$ is the Coulter-Matthews function. As designs, the unitals $\U_\theta$ defined by \eqref{eq:u_theta_general} in the plane $\Pi(f)$  and the classical unitals $\mathcal{H}$ are not isomorphic.
\end{theorem}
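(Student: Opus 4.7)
The plan is to distinguish $\U_\theta$ and $\mathcal{H}$ by a group-theoretic invariant of their full design-automorphism groups, namely the existence of an abelian subgroup of order $q^3$ inside the stabilizer of some point. I will show this invariant is present for $\U_\theta$ but absent for $\mathcal{H}$, so no design isomorphism can exist. Throughout I use that both unitals are point-transitive under their automorphism groups, which on the classical side is O'Nan's theorem $\Aut(\mathcal{H})\cong \mathrm{P\Gamma U}(3,q)$ and on our side is immediate from the shift group action.

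First, on the $\U_\theta$ side, I would point to the abelian subgroup $\Sigma_1$ of order $q^3$ fixing $(\infty)$ that was already produced in the proofs of Theorems \ref{th:inequivalence_unitals_semifields} and \ref{th:inequivalence_unitals_CM}: when $f$ is a Dembowski--Ostrom polynomial, $\Sigma_1=\{\varsigma_{u,v,0}:u\in\F_{q^2},\,v\in\theta\F_q\}$; when $f$ is the Coulter--Matthews function, $\Sigma_1=\{\tau_{a,b}:a\in\F_{q^2},\,b\in\theta\F_q\}$. In either case $|\Sigma_1|=q^3$ and $\Sigma_1$ is abelian (as a translation-like subgroup of the shift group).

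Second, on the classical side, I would exploit the Baker--Ebert description recalled in the paragraph above the theorem: the stabilizer $G$ of any point, say $(\infty)\in\mathcal{H}$, in $\Aut(\mathcal{H})$ has order $2n q^3(q^2-1)$ and contains a non-abelian subgroup $S$ of order $q^3$ normalized by a complement $R\cong \Gal(\F_{q^2}/\F_p)$ with $R\cap S=\{1\}$. I would then split into two cases according to $\gcd(n,p)$. If $\gcd(n,p)=1$, the $p$-part of $|G|$ is exactly $q^3=p^{3n}$, so $S$ is a Sylow $p$-subgroup of $G$; any subgroup of order $q^3$ in $G$ is a $p$-subgroup and by Sylow's theorem conjugate to $S$, hence non-abelian. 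If $p\mid n$, the Sylow $p$-subgroup $P$ of $G$ properly contains $S$ and sits inside $S\rtimes R$; here I would follow the explicit matrix description of $\mathrm{P\Gamma U}(3,q)$ in \cite[p.~70]{barwick_unitals_2008} to compute the commutator structure of $S\rtimes R$ and verify that every order-$q^3$ subgroup still has a non-trivial commutator. This second case is the main obstacle, since the easy Sylow-conjugacy argument breaks down and one must inspect the semilinear action of $R_p$ (the Sylow $p$-subgroup of $R$) on $S/Z(S)$ to rule out an abelian $R_p$-invariant transversal of the appropriate size.

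Finally, to conclude, suppose for contradiction that $\varphi:\U_\theta\to\mathcal{H}$ is a design isomorphism. Conjugation by $\varphi$ carries $\Sigma_1\le\Aut(\U_\theta)$ onto an abelian subgroup $\Sigma_1'\le\Aut(\mathcal{H})$ of order $q^3$ fixing the point $\varphi((\infty))$. By point-transitivity of $\mathrm{P\Gamma U}(3,q)$ on $\mathcal{H}$, we may post-compose with an element of $\Aut(\mathcal{H})$ and assume $\varphi((\infty))=(\infty)$, so that $\Sigma_1'\le G$; this contradicts the absence of an abelian subgroup of order $q^3$ in $G$ established in the previous step. Hence $\U_\theta$ and $\mathcal{H}$ are non-isomorphic as designs.
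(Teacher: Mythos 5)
Your proposal is correct and follows essentially the same route as the paper: the invariant is an abelian subgroup of order $q^3$ in a point stabilizer, supplied on the $\U_\theta$ side by the $\Sigma_1$ from Theorems \ref{th:inequivalence_unitals_semifields} and \ref{th:inequivalence_unitals_CM}, and ruled out on the classical side via the Baker--Ebert stabilizer $G\supseteq S\rtimes R$ with the same case split on $\gcd(n,p)$ (Sylow conjugacy when $\gcd(n,p)=1$, the explicit computation from \cite[p.~70]{barwick_unitals_2008} when $p\mid n$). Like the paper, you leave that second-case computation as a sketch, and your final transitivity/conjugation step just makes explicit what the paper uses implicitly.
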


\section*{Acknowledgment}
The authors would like to thank Alexander Pott for his valuable comments and suggestions. This work is supported by the Research Project of MIUR (Italian Office for University and Research) ``Strutture geometriche, Combinatoria e loro Applicazioni" 2012.

%\bibliographystyle{abbrv}
%\bibliography{D:/Documents/References/Reference_math}

\begin{thebibliography}{10}

\bibitem{abatangelo_ovals_1999}
V.~Abatangelo, M.~R. Enea, G.~Korchm\'{a}ros, and B.~Larato.
\newblock Ovals and unitals in commutative twisted field planes.
\newblock {\em Discrete Mathematics}, 208{\textendash}209:3{\textendash}8,
  1999.

\bibitem{abatangelo_transitive_2001}
V.~Abatangelo, G.~Korchm\'{a}ros, and B.~Larato.
\newblock Transitive parabolic unitals in translation planes of odd order.
\newblock {\em Discrete Mathematics}, 231(1{\textendash}3):3--10, Mar. 2001.

\bibitem{abatangelo_polarity_2002}
V.~Abatangelo and B.~Larato.
\newblock Polarity and transitive parabolic unitals in translation planes of
  odd order.
\newblock {\em Journal of Geometry}, 74(1-2):1--6, Nov. 2002.

\bibitem{albert_generalized_1961}
A.~A. Albert.
\newblock Generalized twisted fields.
\newblock {\em Pacific Journal of Mathematics}, 11:1{\textendash}8, 1961.

\bibitem{baker_intersection_1990}
R.~D. Baker and G.~L. Ebert.
\newblock Intersection of unitals in the {Desarguesian} plane.
\newblock In {\em Congressus {Numerantium}. {A} {Conference} {Journal} on
  {Numerical} {Themes}}, volume~70, pages 87--94, 1990.

\bibitem{baker_buekenhout-metz_1992}
R.~D. Baker and G.~L. Ebert.
\newblock On buekenhout-metz unitals of odd order.
\newblock {\em Journal of Combinatorial Theory, Series A}, 60(1):67--84, May
  1992.

\bibitem{barwick_unitals_2008}
S.~Barwick and G.~Ebert.
\newblock {\em Unitals in Projective Planes}.
\newblock Springer Monographs in Mathematics. Springer New York, Jan. 2008.

\bibitem{blokhuis_proof_2002}
A.~Blokhuis, D.~Jungnickel, and B.~Schmidt.
\newblock Proof of the prime power conjecture for projective planes of order
  $n$ with abelian collineation groups of order $n^2$.
\newblock {\em Proceedings of the American Mathematical Society},
  130(5):1473{\textendash}1476, 2002.

\bibitem{Magma}
W.~Bosma, J.~Cannon, and C.~Playoust.
\newblock The {MAGMA} algebra system {I}: the user language.
\newblock {\em J. Symb. Comput.}, 24(3-4):235{\textendash}265, 1997.

\bibitem{budaghyan_new_2008}
L.~Budaghyan and T.~Helleseth.
\newblock New perfect nonlinear multinomials over {$\F_{p^{2k}}$} for any odd
  prime $p$.
\newblock In {\em SETA '08: Proceedings of the 5th international conference on
  Sequences and Their Applications}, page 403{\textendash}414, Berlin,
  Heidelberg, 2008. Springer-Verlag.

\bibitem{buekenhout_characterizations_1976}
F.~Buekenhout.
\newblock Characterizations of semi quadrics.
\newblock {\em Atti Conv. Lincei}, 17:393--421, 1976.

\bibitem{coulter_planar_1997}
R.~S. Coulter and R.~W. Matthews.
\newblock Planar functions and planes of {L}enz-{B}arlotti class {II}.
\newblock {\em Des. Codes Cryptography}, 10(2):167{\textendash}184, 1997.

\bibitem{dembowski_finite_1997}
P.~Dembowski.
\newblock {\em Finite Geometries}.
\newblock Springer, 1997.

\bibitem{dembowski_planes_1968}
P.~Dembowski and T.~G. Ostrom.
\newblock Planes of order $n$ with collineation groups of order $n^2$.
\newblock {\em Mathematische Zeitschrift}, 103:239--258, 1968.

\bibitem{dempwolff_finite_2006}
U.~Dempwolff and M.~R{\"o}der.
\newblock On finite projective planes defined by planar monomials.
\newblock {\em Innovations in Incidence Geometry}, 4:103{\textendash}108, 2006.

\bibitem{dickson_commutative_1906}
L.~E. Dickson.
\newblock On commutative linear algebras in which division is always uniquely
  possible.
\newblock {\em Transactions of the American Mathematical Society},
  7(4):514{\textendash}522, 1906.

\bibitem{durante_unitals_2013}
N.~Durante and A.~Siciliano.
\newblock Unitals of $\mathrm{PG}(2,q^2)$ containing conics.
\newblock {\em Journal of Combinatorial Designs}, 21(3):101--111, Mar. 2013.

\bibitem{ganley_class_1972}
M.~J. Ganley.
\newblock A class of unitary block designs.
\newblock {\em Mathematische Zeitschrift}, 128(1):34--42, Mar. 1972.

\bibitem{ganley_polarities_1972}
M.~J. Ganley.
\newblock Polarities in translation planes.
\newblock {\em Geometriae Dedicata}, 1(1):103{\textendash}116, 1972.

\bibitem{ganley_paper_1976}
M.~J. Ganley.
\newblock On a paper of {P. D}embowski and {T. G. O}strom.
\newblock {\em Archiv der Mathematik}, 27(1):93{\textendash}98, 1976.

\bibitem{ganley_central_1981}
M.~J. Ganley.
\newblock Central weak nucleus semifields.
\newblock {\em European Journal of Combinatorics}, 2:339--347, 1981.

\bibitem{ganley_relative_1975}
M.~J. Ganley and E.~Spence.
\newblock Relative difference sets and quasiregular collineation groups.
\newblock {\em Journal of Combinatorial Theory. Series A},
  19(2):134{\textendash}153, 1975.

\bibitem{ghinelli_finite_2003}
D.~Ghinelli and D.~Jungnickel.
\newblock Finite projective planes with a large abelian group.
\newblock In {\em Surveys in combinatorics, 2003 {(Bangor)}}, volume 307 of
  {\em London Math. Soc. Lecture Note Ser.}, page 175{\textendash}237.
  Cambridge Univ. Press, Cambridge, 2003.

\bibitem{hirschfeld_sets_1991}
J.~W.~P. Hirschfeld and T.~Sz\"{o}nyi.
\newblock Sets in a finite plane with few intersection numbers and a
  distinguished point.
\newblock {\em Discrete Mathematics}, 97(1{\textendash}3):229--242, Dec. 1991.

\bibitem{hughes_projective_1973}
D.~R. Hughes and F.~C. Piper.
\newblock {\em Projective planes}.
\newblock Springer-Verlag, New York, 1973.
\newblock Graduate Texts in Mathematics, Vol. 6.

\bibitem{hui_non-classical_2013}
A.~M.~W. Hui, H.~F. Law, Y.~K. Tai, and P.~P.~W. Wong.
\newblock Non-classical polar unitals in finite dickson semifield planes.
\newblock {\em Journal of Geometry}, 104(3):469--493, Dec. 2013.

\bibitem{hui_embedding_2014}
A.~M.~W. Hui and P.~P.~W. Wong.
\newblock On embedding a unitary block design as a polar unital and an
  intrinsic characterization of the classical unital.
\newblock {\em Journal of Combinatorial Theory, Series A}, 122:39--52, Feb.
  2014.

\bibitem{knarr_polarities_2009}
N.~Knarr and M.~Stroppel.
\newblock Polarities of shift planes.
\newblock {\em Advances in Geometry}, 9(4):577{\textendash}603, Aug. 2009.

\bibitem{knarr_polarities_2010}
N.~Knarr and M.~Stroppel.
\newblock Polarities and unitals in the {C}oulter-{M}atthews planes.
\newblock {\em Des. Codes Cryptography}, 55(1):9--18, Apr. 2010.

\bibitem{kyureghyan_theorems_2008}
G.~M. Kyureghyan and A.~Pott.
\newblock Some theorems on planar mappings.
\newblock In {\em Arithmetic of finite fields}, volume 5130 of {\em Lecture
  Notes in Comput. Sci.}, page 117{\textendash}122. Springer, Berlin, 2008.

\bibitem{lavrauw_semifields_2011}
M.~Lavrauw and O.~Polverino.
\newblock Finite semifields.
\newblock In L.~Storme and J.~De~Beule, editors, {\em Current research topics
  in {G}alois Geometry}, chapter~6, pages 131--160. {NOVA} Academic Publishers,
  2011.

\bibitem{luneburg_remarks_1966}
H.~L\"{u}neburg.
\newblock Some remarks concerning the {Ree} groups of type {$G_2$}.
\newblock {\em Journal of Algebra}, 3(2):256--259, Mar. 1966.

\bibitem{onan_automorphisms_1972}
M.~E. O'Nan.
\newblock Automorphisms of unitary block designs.
\newblock {\em Journal of Algebra}, 20(3):495--511, Mar. 1972.

\bibitem{penttila_ovoids_2004}
T.~Penttila and B.~Williams.
\newblock Ovoids of parabolic spaces.
\newblock {\em Geometriae Dedicata}, 82(1-3):1--19, November 2004.

\bibitem{piper_unitary_1979}
F.~Piper.
\newblock Unitary block designs.
\newblock In R.~J. Wilson, editor, {\em Graph Theory and Combinatorics},
  volume~34 of {\em Research Notes in Mathematics}, page 98{\textendash}105.
  Pitman Advanced Publishing Program, Boston, 1979.

\bibitem{pott_semifields_2014}
A.~Pott, K.-U. Schmidt, and Y.~Zhou.
\newblock Semifields, relative difference sets, and bent functions.
\newblock In H.~Niederreiter, A.~Ostafe, D.~Panario, and A.~Winterhof, editors,
  {\em Algebraic Curves and Finite Fields, Cryptography and Other
  Applications}. De Gruyter, 2014.

\bibitem{schmidt_planar_2014}
K.-U. Schmidt and Y.~Zhou.
\newblock Planar functions over fields of characteristic two.
\newblock {\em Journal of Algebraic Combinatorics}, 40(2):503--526, Sept. 2014.

\bibitem{segre_ovals_1955}
B.~Segre.
\newblock Ovals in a finite projective plane.
\newblock {\em Canadian Journal of Mathematics}, 7:414--416, Jan. 1955.

\bibitem{weng_further_2010}
G.~Weng and X.~Zeng.
\newblock Further results on planar {DO} functions and commutative semifields.
\newblock {\em Designs, Codes and Cryptography}, 63(3):413{\textendash}423,
  2012.

\bibitem{wilbrink_characterization_1983}
H.~Wilbrink.
\newblock {A characterization of the classical unitals.}
\newblock In {\em Finite geometries}, volume~82 of {\em Lecture Notes in Pure
  and Applied Mathematics}, pages 445--454. Dekker, New York, 1983.

\bibitem{zhou_2^n2^n2^n1-relative_2012}
Y.~Zhou.
\newblock $(2^n,2^n,2^n,1)$-relative difference sets and their representations.
\newblock {\em Journal of Combinatorial Designs}, 21(12):563{\textendash}584,
  2013.

\bibitem{zhou_parabolic_2015}
Y.~Zhou.
\newblock Parabolic unitals in a family of commutative semifield planes.
\newblock {\em Discrete Mathematics}, 338(8):1300--1306, Aug. 2015.

\bibitem{zhou_new_2013}
Y.~Zhou and A.~Pott.
\newblock A new family of semifields with 2 parameters.
\newblock {\em Advances in Mathematics}, 234:43{\textendash}60, Feb. 2013.

\end{thebibliography}
\end{document}